\newcommand{\authoraddress}[2]{%
	\textsc{#1} \textit{E-mail address:} \protect\url{#2}%
}                                                                                                                     
\newcommand{\AuthorAddressone}{                                                                                          
	\authoraddress{Institute for Applied Mathematics, University of Bonn, 53115 Bonn, Germany.}{dematte@iam.uni-bonn.de}%
} 
\newcommand{\AuthorAddresstwo}{                                                                                          
	\authoraddress{Institute for Applied Mathematics, University of Bonn, 53115 Bonn, Germany.}{velazquez@iam.uni-bonn.de}%
} 
\numberwithin{equation}{section}
\newtheorem{theorem}{Theorem}[section]
\newtheorem{lemma}{Lemma}[section]
\newtheorem{prop}{Proposition}[section]
\theoremstyle{definition}
\theoremstyle{remark}
\newtheorem*{remark}{Remark}
\newcommand{\F}{\mathcal{F}}
\newcommand{\Ss}{\mathbb{S}}
\newcommand{\R}{\mathbb{R}}
\newcommand{\eps}{\varepsilon}
\DeclareMathOperator*{\Div}{div}
\title{Well-posedness for a two-phase Stefan problem with radiation}
\author{Elena Demattè\thanks{\AuthorAddressone}, Juan J.L. Velázquez\thanks{\AuthorAddresstwo}}
\begin{document}
	
	\maketitle
	\begin{abstract}
		In this paper we consider a free boundary problem for the melting of ice where we assume that the heat is transported by conduction in both the liquid and the solid part of the material and also by radiation in the solid. Specifically, we study a one-dimensional two-phase Stefan-like problem which contains a non-local integral operator in the equation describing the temperature distribution of the solid. We will prove the local well-posedness of this free boundary problem combining the Banach fixed-point theorem and classical parabolic theory. Moreover, constructing suitable stationary sub- and supersolutions we will develop a global well-posedness theory for a large class of initial data. 
	\end{abstract}
\textbf{Acknowledgments:} The authors gratefully acknowledge the financial support of Bonn International Graduate School of Mathematics (BIGS) at the Hausdorff Center for Mathematics founded through the Deutsche Forschungsgemeinschaft (DFG, German Research Foundation) under Germany’s Excellence Strategy – EXC-2047/1 – 390685813. \\

\noindent\textbf{Keywords:} Radiative transfer equation, conduction, Stefan problem, parabolic regularity, maximum principle.\\

\noindent\textbf{Statements and Declarations:} The authors have no relevant financial or non-financial interests to disclose.\\

\noindent\textbf{Data availability:} Data sharing not applicable to this article as no datasets were generated or analysed during the current study.
	\section{Introduction}
	In this paper we study a one-dimensional two-phase free boundary problem which considers the melting of ice due to conduction and radiation. Specifically, we study the situation in which $ \R^3  $ is filled by a material in its liquid and solid phase. The moving interface is the surface of contact between the liquid and the solid and it changes its position according to the melting of the solid or the solidification of the liquid. The temperature of the interface equals the melting temperature $ T_M $ of the material, while the temperature of the liquid phase is larger than $ T_M $ and the temperature of the solid phase is smaller than $ T_M $. We also assume that the heat is transferred by conduction in both phases of the material, similarly to the classical Stefan problem. In addition, we assume that in the solid phase the heat is transferred also by radiation. This is equivalent to the assumption of a transparent liquid phase, where the material does not interact with the radiation, and of an opaque solid phase, where both absorption and emission processes take place. 
	
	To be more precise, we consider a model in which at the initial time $ t=0 $ the liquid phase of the material fills the negative half-space $ \{x\in\R^3: x_1<0\} $ and the solid phase fills the positive half-space $ \{x\in\R^3: x_1>0\} $. Hence, the interface is at time $ t=0 $ the plane $ \{0\}\times \R^2 $. The model that we study is a one-dimensional free boundary problem obtained under the further assumption that the temperature depends only on the variable $ x_1 $ and that both phases have the same constant density.
	\begin{figure}[H]
	 \begin{center}
	\vspace{-0.3cm}	\includegraphics[height=5cm]{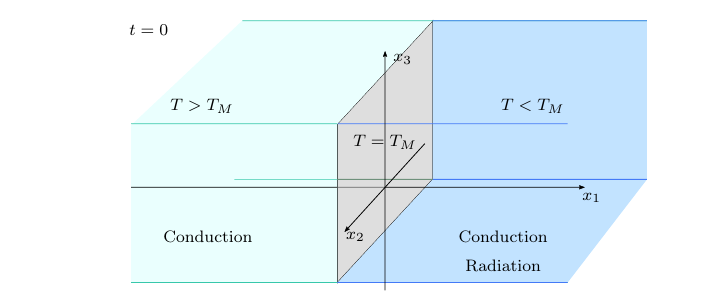}\vspace{-0.3cm}	\caption{Illustration of the considered model at the initial time $ t=0 $.}
	\end{center}	
	\end{figure}
In the case where the heat is transferred by conduction, the evolution equation for the temperature is given by the well-known heat equation
\begin{equation}\label{conduction}
	C\partial_t u=K\partial_x^2u,
\end{equation}
where $ C>0 $ is the heat capacity of the material and $ K>0 $ is the conductivity of the material.\\

When the heat is transferred also by radiation we have to include in the model, besides the terms describing heat conduction, the ones associated to the radiative transfer equation, i.e. the kinetic equation for the density of radiative energy. Let us consider first a body $ \Omega\subset \R^3 $ interacting with radiation. Defining by $ I_\nu(t,x,n) $ the radiation intensity, i.e. the density of energy carried by photons with frequency $ \nu>0 $, at position $ x\in\Omega$, moving in direction $ n\in\Ss^2 $ at time $ t>0 $, the radiative transfer equation is given by
\begin{equation*}\label{RTEfull}
	\frac{1}{c}\partial_t I_\nu(t,x,n)+n\cdot \nabla_x I_\nu(t,x,n)=\alpha_\nu^e-\alpha_\nu^aI_\nu(t,x,n)+\alpha_\nu^s\left(\int_{\Ss^2} \mathbb{K}(n,n')I_\nu(t,x,n')\;dn'-I_\nu(t,x,n)\right),
\end{equation*}
where $ \mathbb{K} $ is the scattering kernel and $ \alpha_\nu^e $, $ \alpha_\nu^a $ and $ \alpha_\nu^s $ are the emission parameter, the absorption and the scattering coefficient, respectively. They are used in order to describe the emission, absorption and scattering of photons, which are the processes involved in the interaction of radiation with the matter. In this paper we neglect the scattering process and we set $ \alpha_\nu^s=0 $. Emission and absorption are the only processes through which the radiation changes the temperature of a material, although scattering could change the spatial distribution of radiation. Moreover, we consider local thermal equilibrium, i.e. we assume that at any time $ t>0 $ and at any point $ x $ there is a well-defined temperature. Under this assumption, the emission parameter takes the specific form $ \alpha_\nu^e= \alpha_\nu^a B_\nu(T(t,x)) $, where $ B_\nu(T)= \frac{2h\nu^3}{c^2}\frac{1}{e^{\frac{h\nu}{kT}}-1} $ is the Planck distribution of a black body. Furthermore, we consider in this paper only the Grey approximation with constant absorption coefficient, i.e. we assume that the $ \alpha_\nu^a $ does not depend on the frequency $ \nu $ nor on the space variable $ x $.

Thus, defining $ \alpha_\nu^a=\alpha $, the radiative transfer equation we will study takes the form
\begin{equation}\label{RTE}
	\frac{1}{c}\partial_t I_\nu(t,x,n)+n\cdot \nabla_x I_\nu(t,x,n)=\alpha \left(B_\nu(T(t,x)) -I_\nu(t,x,n)\right).
\end{equation}
The evolution of the temperature due to the radiation process is given by the energy balance equation 
\begin{equation}\label{div-free}
	C\partial_t T(t,x)+\frac{1}{c}\partial_t\left(\int_0^\infty d\nu\int_\Ss dn\;I_\nu(t,x,n)\right)+\Div\left(\int_0^\infty d\nu\int_\Ss dn\;nI_\nu(t,x,n)\right)=0
\end{equation}
coupled with the radiative transfer equation \eqref{RTE} and with suitable boundary conditions. 

Turning back to the free boundary problem, we assume that the temperature depends only on $ x_1 $. Therefore, the interface is given by the plane $ \{s(t)\}\times \R^2 $ normal to the $ x_1 $-axis. In this paper we assume that there is no external source of radiation. Mathematically, we consider as boundary condition for the radiation at the interface
\begin{equation}\label{bnd.interface}
	I_\nu\left(t,(s(t),x_2,x_3),n\right)=0\;\;\text{ if } n_1>0,
\end{equation}
where $ n_1=n\cdot e_1 $ for $ n\in\Ss^2 $. We emphasize that radiation can escape the solid, i.e. $ I_\nu(t,s(t),n)\ne0 $ for $ n_1<0 $. Since the liquid is transparent, the escaped photons do not interact with the liquid and they do not change their direction. Specifically, the escaped radiation cannot return in the solid. This is the reason why in absence of external sources the incoming boundary condition at the interface is given by \eqref{bnd.interface}. Moreover, since we are studying a problem where the heat is transferred by both conduction and radiation we may assume that the radiation intensity solves the stationary radiative transfer equation and consequently that in \eqref{div-free} the time derivative of the total radiation energy is negligible. Indeed, since the photons travel with speed $ c $, i.e. speed of light, the radiation intensity stabilizes in a much shorter time than the characteristic time required for significant changes of the temperature due to the transport of heat by both conduction and radiation. Finally, the evolution equation for the temperature in the case of heat transfer due to conduction and radiation takes into account the heat production rate due to both processes and it is a combination of the heat equation \eqref{heat.eq} and the energy balance equation \eqref{div-free} for the radiative transfer equation, i.e. in this case
\begin{equation}\label{mixed}
	C\partial_t T(t,x)-K\partial_x^2T(t,x)+\Div\left(\int_0^\infty d\nu\int_\Ss dn\;nI_\nu(t,x,n)\right)=0.
\end{equation}
The interface moves according to the Stefan condition, i.e. the energy balance law at the interface, which is given by
\begin{equation}\label{stefan.condition}
	\dot{s}(t)=\frac{1}{L}\left(K_S \partial_{x_1} T(t,s(t)^+)-K_L\partial_{x_1} T(t,s(t)^-)\right),
\end{equation}
where $ K_S $ and $ K_L $ are the conductivity of the solid and the liquid, respectively, and L is the specific latent heat. Notice that the Stefan condition is the same as the one for the classical Stefan problem. This can be explained by the fact that the intensity of radiation $ I_\nu $ in the liquid is given by the constant continuation of the radiation intensity at the interface. Indeed, the liquid is assumed to be transparent, i.e. the radiation is still present and it passes through the liquid region without interacting with it. In other words, in the liquid the temperature evolves also according to \eqref{mixed} for $ I_\nu $ solving \eqref{RTE} with $ \alpha=0 $. Note that in the case in which $ I_\nu $ is constant, e.g. in the liquid, the divergence term disappears and \eqref{mixed} is equivalent to \eqref{conduction}. Therefore, the Stefan condition, according to which the discontinuity of heat flux at the boundary is proportional to the speed of the motion of the interface, is given by \eqref{stefan.condition} since the flux of radiating energy is continuous. Defining by $ C_S $ and $ C_L $ the heat capacity of the solid and the liquid and putting together equations \eqref{conduction}, \eqref{RTE}, \eqref{mixed}, \eqref{bnd.interface} and \eqref{stefan.condition} we study the following free boundary problem
	\begin{equation}\label{syst.1}
		\begin{cases}
			C_L \partial_t T(t,x_1)=K_L \partial_{x_1}^2 T(t,x_1)& x_1<s(t),\\
			C_S \partial_t T(t,x_1)=K_S \partial_{x_1}^2 T(t,x_1)-\Div\left(\int_0^\infty d\nu\int_{\Ss} dn nI_\nu(t,x,n)\right)& x_1>s(t),\\
			n\cdot \nabla_x I_\nu(t,x,n)=\alpha \left(B_\nu(T(t,x_1))-I_\nu(t,x,n)\right)& x_1>s(t),\\
			I_\nu(t,x,n)=0&x_1=s(t),\;n_1>0,\\
			T(t,s(t))=T_M&x_1=s(t),\\
			T(0,x)=T_0(x)& x_1\in\R,\\
			\dot{s}(t)=\frac{1}{L}\left(K_S \partial_{x_1} T(t,s(t)^+)-K_L\partial_{x_1} T(t,s(t)^-)\right).
		\end{cases}
	\end{equation}
We emphasize that the main peculiarity of the model \eqref{syst.1} is that only the solid is emitting radiation. This is due to the assumption of a perfectly transparent solid. Another interesting problem would be to consider in addition a non-trivial external source of radiation heating the solid from far away, i.e. to set as boundary conditions
\[	\left(t,(s(t),x_2,x_3),n\right)=g_\nu(n)>0\;\;\text{ if } n_1>0.\]
In this case we expect to observe superheated solid, i.e. regions in the solid phase where the temperature is greater than the melting temperature.

Finally, we mention that in the upcoming paper \cite{StefanRad2} we continue the analysis of the problem presented in this article constructing traveling wave solutions for \eqref{syst.1}, which are the natural candidates to describe the long time asymptotics for the solution to \eqref{syst.1}.
\subsection{Summary of previous results}
In this paper we consider a free boundary problem similar to the classical two-phase Stefan problem modeling the melting of ice assuming in addition that the heat is transferred by radiation in the solid. The pioneer of the study of such free boundary problems was J. Stefan, after whom these problems are named, in the late 80's (c.f. \cite{Stefan3}). The same person worked also on heat radiation developing the well-known Stefan law, otherwise called Stefan-Boltzmann law, which states that the total radiation of a black body is proportional to the fourth power of the temperature (cf. \cite{Stefan5}). In this subsection we revise important results on both the theory of free boundary problems and of radiative heat transfer. 

Starting from the work of J. Stefan, the one and two-phase Stefan problem for the melting of ice has been extensively studied in both one-dimensional and higher dimensional versions, and different definitions of solutions were considered, such as classical solutions and weak enthalpy solutions. The first are defined as strong solutions of the Stefan problem, while the latter are defined as the weak solutions of the enthalpy formulation of the free boundary problem.
 

The well-posedness theory for classical solutions to the Stefan problem has been considered for example in \cite{Friedman5,Friedman1,Rubinstein} showing the local well-posedness via a fixed-point equation for integral equations of Volterra type. The global well-posedness is proved applying the maximum principle. In \cite{Friedman1} another approach involving the Baiocchi transform is also considered. In \cite{Friedman2,FriedmanKinderlehrer} a well-posedness result based on the study of a suitable variational inequality is presented. Other results about the well-posedness theory for classical solutions can be found in \cite{CannonHenryKotlow,CannonPrimicerio,Meirmanov}. In \cite{Friedman1,Meirmanov} the asymptotic behavior of the one-dimensional one-phase problem is considered and it is proved that the temperature is given by a self-similar profile as $ t\to\infty $. 
Important results about the theory of weak (enthalpy) solutions can be found in \cite{Friedman4}, for the two-phase one-dimensional problem, and in \cite{Friedman3}, for the higher dimensional one and two-phase problem. 

An important difference between classical and weak enthalpy solutions is in the context of supercooled, superheated and mushy regions. Regions of liquid (or solid) at a temperature $ T<T_M $ (or $ T>T_M $) are denoted in the classical theory supercooled (resp. superheated) regions. In the absence of such regions, weak and classical solutions are equivalent. In the enthalpy formulation the onset of mushy regions, i.e. $ \left\{(t,x):T(t,x)=T_M\right\} $ with positive measure, is allowed, whereas supercooled or superheated regions cannot appear.

Concerning the study of mushy regions for Stefan problems with volumetric heat sources, in \cite{FasanoPrimicerio2} the authors give a clear distinction between classical and weak enthalpy solutions and introduce the notion of classical enthalpy solutions, which allow the formation of mushy regions. In \cite{LaceyShillor} the authors consider the classical solutions to a one-dimensional two-phases Stefan problem with volumetric heat sources and show the formation of regions of supercooled liquid or superheated solid. Other examples of studies of the formation of mushy regions are \cite{Bertsch,FasanoPrimicerio1,LaceyTayler,PrimicerioUghi,Visintin1,Visintin2}. 

Moving to the transfer of heat by radiation, this problem has been widely studied starting form the seminal works of Compton \cite{compton} in 1922 and of Milne \cite{Milne} in 1926. The mathematical theory behind the interaction of photons with matter deals with the study of the radiative transfer equation, as given in \eqref{RTE}. The derivation and the main properties of this kinetic equation can be found for instance in \cite{Chandrasekhar,mihalas,oxenius,Rutten,Zeldovic}. 

Also in recent years many different problems were considered, such as well-posedness results, the diffusion approximation and the combination of radiative transfer with other existing models. In \cite{dematt42024compactness,jang} the authors proved the well-posedness theory for the stationary radiative transfer equation. 
Another extensively studied problem is the so-called diffusion approximation, i.e. the limit of the radiative transfer equation when the mean free path of the photons is very small. 
See for instance \cite{Golse3,Golse6,dematte2024equilibrium,dematte2023diffusion} and the reference therein. 

The radiative heat transfer has been also considered in problems concerning more involved interaction between radiation and matter. For instance, problems studying these interactions in a moving fluid can be found in \cite{Golse1,Golse2,mihalas,Zeldovic}. 
We refer to \cite{dematte,paper} and the reference therein for problems considering models of coupled Boltzmann equations and radiative transfer. 
Moreover, also problems where the heat is transported in a body by conduction and radiation
and homogenization problems in porous and perforated domains, where the heat is transported by conduction, by radiation and in some cases also by convection, have been studied in several works. We refer to the literature of our previous work \cite{dematte2024equilibrium}.

Finally, models of melting processes assuming transport of heat by conduction and radiation has been considered numerically in some engineering applications, for instance in \cite{engineering2,engineering5,engineering1,engineering3,engineering4}. There, free boundary problems concerning phase transition due to both conduction and radiation are numerically analyzed and several one-dimensional models considering one, two, and three-phase Stefan-like problems are formulated based on experimental results. Another relevant numerical application, which has been extensively studied in recent years, is the analysis of free boundary problems modeling the vaporization of droplets where the heat is transported by radiation and conduction. For example in \cite{droplets3,droplets2,droplets5,droplets7,droplets8,droplets1,droplets6} numerical simulations show that the radiative heat transfer plays an important role in the vaporization of droplets.
\subsection{Main results and plan of the paper}
In this paper we study the well-posedness theory for problem \eqref{syst.1} and it is structured as follows.

First of all, in the next subsection \ref{Subs.scal.} we will perform some rescaling obtaining an equivalent version of the problem \eqref{syst.1} which we will consider  for the rest of the paper, while in subsection \ref{subs.notation} we clarify some notations that we will use throughout this paper.

In the following Section \ref{Sec.loc.Well.Pos}, using Banach fixed-point theorem and classical parabolic theory, we show a local well-posedness result, which can be summarized as the following
\begin{theorem}\label{thm.local.0}
	Let $ T_0\in C^{0,1}(\R) $ be the initial temperature satisfying the condition
	\[T_0(x)>T_M\;\text{ if }x<0,\quad T_0(0)=T_M,\quad T_0(x)<T_M\;\text{ if }x>0.\]
	Under suitable assumptions on the regularity of $ T_0 $ in $ \R_\pm $ there exists a time $ t_*>0 $ such that there exists a unique solution to the problem \eqref{syst.1} for $ t\in[0,t_*] $. Moreover, the temperature satisfies
	\[T_0(x)>T_M\;\text{ if }x<s(t),\quad T_0(s(t))=T_M,\quad T_0(x)<T_M\;\text{ if }x>s(t).\]
\end{theorem}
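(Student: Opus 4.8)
The strategy is to eliminate the radiation field, reduce \eqref{syst.1} to a two-phase Stefan problem with a nonlocal lower-order term in the solid, and solve the latter by a fixed-point argument on the free boundary. After the rescaling of subsection~\ref{Subs.scal.} we may normalize the constants. Given the interface $s$ and the solid temperature $T$, the stationary transport equation together with the vanishing condition at $x_1=s(t)$ determines $I_\nu$ explicitly by integration along the characteristics $r\mapsto x+rn$: for $n_1>0$ the backward ray reaches the interface, so $I_\nu(x,n)$ equals an integral of $\alpha B_\nu(T)$ over the segment joining $x$ to $\{x_1=s(t)\}$ weighted by $e^{-\alpha(\cdot)}$, while for $n_1<0$ the backward ray runs off to $+\infty$ and $I_\nu(x,n)$ is the corresponding convergent improper integral (this uses that $T$ stays bounded, and here the hypotheses on $T_0$ at infinity enter, together with $\int_0^\infty B_\nu(T)\,d\nu\sim T^4$). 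Performing the angular integration and using that $T$ depends on $x_1$ only, the divergence term in \eqref{syst.1} becomes $\partial_{x_1}\mathcal{R}_s[T](x_1)$ for a nonlocal operator $\mathcal{R}_s$ with a weakly singular, exponentially decaying kernel whose dependence on $s$ comes through the integration limits; one integration by parts shows $\partial_{x_1}\mathcal{R}_s[T]$ is controlled by the Lipschitz norm of $T$, so the solid equation is a heat equation with an admissible nonlocal source.

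I would then freeze the geometry by the front-fixing change of variables $\xi=x_1-s(t)$, which sends the liquid to $\{\xi<0\}$ and the solid to $\{\xi>0\}$ at the cost of an added drift $\dot s(t)\,\partial_\xi T$ in both equations and a transformed operator $\mathcal{R}_s$. The fixed-point map $\Phi$ is defined on a small ball, around $t\mapsto \dot s(0)\,t$, of boundaries $s\in C^{1+\beta/2}([0,t_*])$ with $s(0)=0$ and $\dot s(0)$ fixed by the Stefan condition at $t=0$: (i) solve the linear heat equation in the liquid on $(-\infty,0)$ with datum $T_M$ at $\xi=0$ and initial datum $T_0$, and record the flux $q_L(t)=\partial_\xi T(t,0^-)$; (ii) solve the solid problem on $(0,\infty)$, a heat equation carrying $-\partial_\xi\mathcal{R}_s[T]$, with datum $T_M$ at $\xi=0$ and initial datum $T_0$ — itself solved by a contraction, since the parabolic solution operator applied to the (Lipschitz) nonlocal source has norm $o(1)$ as $t_*\to0$ in the relevant norm — and record $q_S(t)=\partial_\xi T(t,0^+)$; (iii) set $\Phi(s)(t)=\int_0^t\frac1L\big(K_S q_S(\tau)-K_L q_L(\tau)\big)\,d\tau$. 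Parabolic Schauder estimates supply the Hölder bounds on $q_L,q_S$ needed to make $\Phi$ map the ball into itself.

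The contraction of $\Phi$ for $t_*$ small rests on three facts: the interface flux of a half-line heat equation depends on its Dirichlet and initial data contractively in the relevant norms as $t_*\to0$; $\mathcal{R}_s[T]$ is Lipschitz in $T$ (through the smooth, monotone $B_\nu$) and in $s$ (through the integration limits and the exponential factor); and changing the drift $\dot s$ in the transformed equations produces differences that are small for short time. Banach's theorem then yields a unique $s$, hence, undoing the transformations and reconstructing $I_\nu$, a unique solution $(T,s,I_\nu)$ of \eqref{syst.1} on $[0,t_*]$. The ordering statement follows from the maximum principle applied in each phase: in the liquid, $T$ solves a homogeneous heat equation with boundary value $T_M$ and initial value $T_0>T_M$, so $T>T_M$ in the open liquid region by the strong maximum principle; in the solid one checks that $-\partial_{x_1}\mathcal{R}_s[T]$ preserves the comparison principle ($B_\nu$ being increasing in $T$), giving $T<T_M$ for $x_1>s(t)$.

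The main obstacle I expect is the analysis of the nonlocal radiation operator in the moving geometry: showing it is well-defined on the class of temperatures produced by parabolic theory, that it maps into a Hölder class compatible with Schauder estimates for the solid equation, that it is Lipschitz in the pair $(T,s)$ with constants that can be absorbed into the short-time smallness, and that it does not spoil the maximum principle. The coupling created by the free boundary — which at once imposes the Dirichlet condition on $T$, the vanishing condition on $I_\nu$, and the integration limits in the characteristic formula — is the delicate technical point; the remaining ingredients follow the classical fixed-point scheme for two-phase Stefan problems, with corner compatibility conditions at $(t,x)=(0,0)$ supplying the regularity quantified by the hypotheses on $T_0$ in $\R_\pm$.
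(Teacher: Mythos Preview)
Your overall strategy is sound and lands in the same place as the paper, but the fixed-point scheme is organized differently. The paper runs a single Banach fixed-point on the triple $(u_1,u_2,\dot s)$ in the low-regularity space $C^{0,1}_{t,y}\times C^{0,1}_{t,y}\times C^0_t$, treating $\dot s\,\partial_y u_i$ and $I_\alpha[u_2+T_M]$ as sources in the half-line Green's function representation and estimating every kernel by hand (equations \eqref{est.int.dg+}--\eqref{est.int.g-}); H\"older regularity and the identification of classical solutions are then obtained a posteriori (Theorem~\ref{thm.loc.existence.reg.}). Your scheme instead iterates on $s\in C^{1+\beta/2}$ alone, solves the two half-line problems for each $s$ (with a nested contraction for the nonlinear solid equation), and appeals to parabolic Schauder theory to control the boundary fluxes. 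Both routes are classical for Stefan problems; the paper's choice avoids the nested loop and the compatibility issues at $t=0$ that come with working directly in H\"older spaces, at the price of a separate regularity step, while yours yields regularity immediately but needs the corner compatibility you mention to make the Schauder machinery run from $t=0$.

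One point deserves more care than you give it. Your maximum-principle argument in the solid (``$-\partial_{x_1}\mathcal{R}_s[T]$ preserves the comparison principle'') is correct in spirit---at an interior maximum of $T_2$ the term $T_2^4-\int\frac{\alpha}{2}E_1(\alpha|\cdot-\eta|)T_2^4\,d\eta$ is nonnegative because the kernel has mass at most one---but on the unbounded half-line the usual maximum principle does not apply directly, and a putative supremum need not be attained. The paper handles this by approximating with problems on $[0,R]$ with cut-off initial data (Lemma~\ref{lemma.cut.off}), applying the strong maximum principle there, and passing to the limit via a uniform-in-$R$ compactness argument (Lemma~\ref{lemma.interval.existence}, Proposition~\ref{prop.sol.}). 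Your sketch would need either this approximation or an explicit barrier at infinity to close.
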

In addition to the local well-posedness theory, in Section \ref{subs.global} we will also prove a more general global well-posedness result, which applies for a large family of initial data. The following theorem will be proved constructing a suitable family of sub- and supersolutions and applying the maximum principle to the parabolic equations in \eqref{syst.1}.
\begin{theorem}[Global Well-posedness]\label{thm.global.0}
	Let $ T_0 $ as in Theorem \ref{thm.local.0}. There exists a large class of initial data for which there exists a unique global in time solution to the problem \eqref{syst.1}.
\end{theorem}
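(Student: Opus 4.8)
The plan is to promote the local solution of Theorem \ref{thm.local.0} to a global one in four steps: first rewrite the radiative contribution as an explicit non-local operator with a structure compatible with the maximum principle; then establish a comparison principle for the free boundary problem \eqref{syst.1}; then construct a family of stationary sub- and supersolutions; and finally use the resulting a priori bounds to iterate the local existence theorem indefinitely.

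\textbf{Step 1 (eliminating the radiation unknown).} In the solid $\{x_1>s(t)\}$ the intensity solves the stationary transport equation $n\cdot\nabla_x I_\nu=\alpha(B_\nu(T)-I_\nu)$ with the vanishing incoming datum \eqref{bnd.interface}. Integrating along characteristics expresses $I_\nu(t,x_1,n)$ explicitly through $B_\nu(T(\cdot))$ on the segment joining $x_1$ to the interface (plus the datum at $+\infty$). Inserting this into the flux and differentiating, using $\partial_{x_1}(n_1I_\nu)=\alpha(B_\nu(T)-I_\nu)$, and then integrating over $\nu$ and over $n\in\Ss^2$, the divergence term in \eqref{syst.1} takes the form
\begin{equation*}
	\Div\Big(\int_0^\infty d\nu\int_{\Ss^2}dn\; nI_\nu\Big)=a\,G(T(x_1))-\int_{s(t)}^\infty k(x_1,y)\,G(T(y))\,dy,
\end{equation*}
where $G(T):=\int_0^\infty B_\nu(T)\,d\nu$ is smooth, strictly increasing and proportional to $T^4$, the number $a>0$ is a constant, and $k\ge0$ is an explicit kernel (built from exponential-integral functions) satisfying $\int_{s(t)}^\infty k(x_1,y)\,dy\le a$, the inequality encoding that re-absorbed radiation never exceeds emitted radiation. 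Hence in the solid the temperature equation becomes $C_S\partial_tT=K_S\partial_{x_1}^2T-aG(T)+\int_s^\infty k(\cdot,y)G(T(y))\,dy$. Writing the difference of two solutions as $w$ and linearising $G$ as $b\,w$ with $b>0$ bounded, the reaction part becomes $-a\,b(x_1)w(x_1)+\int k(x_1,y)b(y)w(y)\,dy$; since $k\ge0$ has total mass at most $a$, at a positive interior maximum of $w$ this quantity is controlled by $w$ itself, so, after passing if needed to $e^{-\lambda t}w$ with $\lambda$ large, it does not spoil the maximum principle. This is the only point where the non-locality enters.

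\textbf{Steps 2 and 3 (comparison principle and stationary barriers).} With Step 1 in hand I would prove: if $(\overline T,\overline s)$ is a supersolution of \eqref{syst.1}, i.e. the two parabolic inequalities hold with the correct sign, the Stefan inequality $\dot{\overline s}\ge\frac1L(K_S\partial_{x_1}\overline T(\overline s^+)-K_L\partial_{x_1}\overline T(\overline s^-))$ holds, $\overline T\ge T_M$ on $\{x_1=\overline s\}$, $\overline s(0)\ge s(0)=0$ and $\overline T(0,\cdot)\ge T_0$, then $T\le\overline T$ and $s(t)\le\overline s(t)$ as long as the solution exists, and symmetrically for subsolutions. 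The proof follows the classical comparison argument for two-phase Stefan problems: consider the first crossing time and distinguish whether it occurs at an interior point of one of the phases (parabolic maximum principle, together with the order-preserving structure from Step 1 in the solid) or at a free boundary point (Hopf's lemma and the Stefan condition, which force the interfaces to separate rather than cross). Next I would construct stationary sub- and supersolutions: affine profiles in the liquid and profiles in the solid solving the integro-differential ODE $K_S\overline T''=aG(\overline T)-\int_{\overline s}^\infty k(\cdot,y)G(\overline T(y))\,dy$ on a half-line with the interface frozen at a point $\overline s$, the parameters chosen so that the one-sided Stefan inequality holds (for a supersolution with a motionless interface the solid-side flux must dominate). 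Existence of such solid profiles is itself a fixed-point problem, solved by monotone iteration using again the structure of the non-local operator. The \emph{large class of initial data} in the statement is then precisely the set of $T_0$ that at $t=0$ lie between a member of the subsolution family and a member of the supersolution family, for instance monotone $T_0$ with suitable limits at $\pm\infty$ and a flux-balanced interface.

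\textbf{Step 4 (continuation).} Along the maximal existence interval the comparison principle traps $T$ between the two stationary barriers, so $\|T(t,\cdot)\|_{L^\infty}$ and $\|G(T(t,\cdot))\|_{L^\infty}$ are bounded uniformly in time; it also traps $s(t)$ between the two fixed interfaces, and interior parabolic estimates then bound $\dot s(t)$ and the relevant norms of $T$ on $\R_\pm$. Since the existence time furnished by Theorem \ref{thm.local.0} depends only on these quantities, the solution can be restarted past every finite time, giving a global solution; uniqueness is inherited from the local statement, and in any case follows from the comparison principle. I expect the crux to be Steps 1 and 2 taken together: isolating the precise emission-minus-absorption form of $\Div\int\int nI_\nu$, in particular the mass bound $\int k\le a$ and the sign of the boundary-escape term, and then checking that the comparison argument for the moving-boundary problem genuinely survives the non-locality. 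A secondary difficulty is the explicit construction of the stationary solid profiles in Step 3 and the identification of the sharp admissible class of initial data.
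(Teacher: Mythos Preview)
Your Step 1 is correct and matches what the paper does in Section \ref{Subs.scal.}. The main gap is in Steps 3--4. You propose stationary barriers in the original frame with interfaces frozen at fixed points $\underline s,\overline s$, and then claim that comparison traps $s(t)$ between these two fixed values. But nothing in the problem forces $s(t)$ to remain bounded: for the class of data the paper treats, $\dot s$ is merely bounded (by $\max\{C_1,C_2\}$), so $s(t)$ can grow linearly, and indeed traveling-wave solutions are expected (see the reference to \cite{StefanRad2}). A supersolution with a frozen interface therefore cannot exist globally in time for such data, and your ``large class'' would collapse to a much smaller set than the one the theorem is about. The paper avoids this by passing to the moving frame $y=x-s(t)$, where the interface is fixed at $y=0$ for all time. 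There it builds a single explicit barrier $w(y)$ (see \eqref{W}) which is a \emph{supersolution} in the liquid and a \emph{subsolution} in the solid; since $T_i(t,0)=T_M=w(0)$, the inequalities $T_1<w$ on $\R_-$ and $T_2>w$ on $\R_+$ force $\partial_yT_i(t,0^\pm)\ge\partial_yw(0^\pm)$, which via the Stefan condition gives the bound $-C_1<\dot s(t)<C_2$ directly, without ever comparing free boundaries.

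There is a second gap in Step 4: the local existence time in Theorem \ref{thm.local.well.pos.} depends on $\|\partial_yT_i\|_\infty$, not just on $\|T_i\|_\infty$, and $\dot s$ is a \emph{boundary} derivative, so ``interior parabolic estimates'' do not suffice. The paper handles this by a separate, fairly involved argument: once $\dot s$ is bounded, it applies the maximum principle to the \emph{differentiated} equations for $\partial_yT_i$, using explicit time-growing barriers $\psi_\pm(t)$ and $\varphi_\pm(t,y)$ (the latter built with an auxiliary function absorbing the singular term $\frac{\alpha}{2}T_M^4E_1(\alpha y)$ appearing after differentiation of the non-local operator). This yields $\|\partial_yT_i\|_\infty\le C(t^*)$ on any finite interval and closes the continuation. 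Your proposal contains no analogue of this step.
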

As we will see in Section \ref{subs.global}, the assumptions on the initial data concern the upper bound on the initial temperature of the liquid. 
\subsection{Some scaling}\label{Subs.scal.}
In this subsection we rescale the problem \eqref{syst.1} obtaining an equivalent problem in order to reduce the number of parameters. In this paper we will not assuming a positive source of radiation. Nevertheless, we remark that the computations we perform in this subsection can be also adapted in the presence of a non-trivial source of radiation. 

First of all we reduce the radiative term and the radiative transfer equation to a non-local integral term. To this end we solve 
\begin{equation*}
	\begin{cases}
		n\cdot \nabla_x I_\nu(t,x,n)=\alpha \left(B_\nu(T(t,x_1))-I_\nu(t,x,n)\right)& x_1>s(t),\\
		I_\nu(t,x,n)=0&x_1=s(t),\;n_1>0,\\
	\end{cases}
\end{equation*}
by characteristics. This procedure is similar to the computation in Section 2.2 of \cite{dematte2023diffusion}.\\ 
\begin{minipage}{0.55\textwidth}
	 We define for $ (t,x,n)\in(0,\infty)\times\{\xi\in\R^3:\xi_1>s(t)\}\times\Ss^2 $ the point $ y(t,x,n)\in\{s(t)\}\times\R^2 $ to be the intersection of the half-line starting from $ x $ and moving in direction $ -n $ and we denote $ d(t,x,n) $ to be the distance of $ x $ to the interface $ \{s(t)\}\times\R^2 $ in direction $ -n $. We are hence considering
	 \begin{equation*}
	 	\begin{split}
	 		&y(t,x,n)=\{s(t)\}\times\R^2\cap\{x-tn:t>0\}\quad\text{ and }\\&d(t,x,n)=|x-y(x,n)| \text{ such that } x=y(t,x,n)+d(t,x,n)n.
	 	\end{split}
	 \end{equation*} We also define $ d(t,x,n)=\infty $ if $ n_1<0 $.
 An easy application of trigonometry shows also that \[d(t,x,n)\cdot \cos(\theta(n,e_1))=x_1-s(t),\]where $ \theta(n,e_1) $ is the angle between the unit vectors $ n $ and $ e_1=(1,0,0) $. This implies that \[d(t,x,n)n_1=x_1-s(t) \quad \text{ if }n_1>0.\]
\end{minipage}
\begin{minipage}{0.45\textwidth}
	\begin{figure}[H]
		\includegraphics[height=6cm]{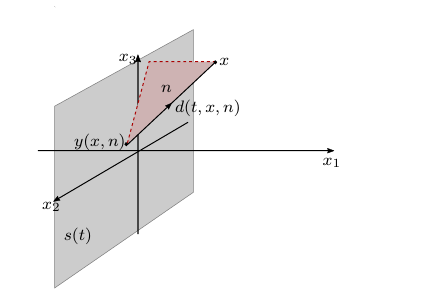}
		\caption{Illustration of the characteristics.}
	\end{figure}
\end{minipage}\\

\noindent Solving the radiative transfer equation by characteristics we hence obtain for $ x_1>s(t) $
\[ I_\nu(t,x,n)=\int_0^{d(t,x,n)}d\tau \alpha \exp\left(-\alpha \tau\right)B_\nu(T(t,x_1-\tau n_1)).\]
As we pointed out above, $ I_\nu $ is not zero on the liquid, i.e. for $ x_1<s(t) $, but is constant to the radiation intensity at the interface. Thus, for $ x_1<s(t) $ we have
\[ I_\nu(t,x,n)=\mathds{1}_{\{n_1\leq0\}}\int_0^\infty d\tau \alpha \exp\left(-\alpha \tau\right)B_\nu(T(t,s(t)-\tau n_1)).\]
This proves also the previous claim about the continuity of the radiative flux through the interface.

While $ \Div\left(\int_0^\infty d\nu\int_{\Ss} dn nI_\nu(t,x,n)\right)=0 $ for $ x_1<s(t) $, a similar computation as in \cite{dematte2023diffusion} shows for $ x_1>s(t) $
\begin{equation*}
	\begin{split}
	\Div\left(\int_0^\infty d\nu\int_{\Ss} dn nI_\nu(t,x,n)\right)=&4\pi\sigma\alpha T^4(t,x_1)-4\pi\sigma\alpha\int_{s(t)}^\infty d\eta \frac{\alpha E_1(\alpha|x_1-\eta|)}{2}T^4(t,\eta),
	\end{split}
\end{equation*}
where $ E_1(x)=\int_{|x|}^\infty \frac{e^{-t}}{t}dt $ is the exponential function. This can be proved using that
\begin{equation*}
\int_{\R^2}d\xi \frac{e^{-\alpha\sqrt{y^2+\xi^2}}}{y^2+\xi^2}=2\pi\int_0^\infty d\rho\; \rho\frac{e^{-\alpha\sqrt{y^2+\rho^2}}}{y^2+\rho^2}=\pi\int_0^\infty dr \frac{e^{-\alpha\sqrt{y^2+r}}}{y^2+r}=2\pi\int_{|y|}^\infty dz \frac{e^{-\alpha z}}{z}=2\pi\int_{\alpha |y|}^\infty dz \frac{e^{- z}}{z}.
\end{equation*}

Therefore, we can write the system \eqref{syst.1} as follows
\begin{equation*}\label{syst.2}
	\begin{cases}
		C_L \partial_t T(t,x_1)=K_L \partial_{x_1}^2 T(t,x_1)& x_1<s(t),\\
		C_S \partial_t T(t,x_1)=K_S \partial_{x_1}^2 T(t,x_1)-4\pi\sigma\alpha I_\alpha[T](t,x_1)& x_1>s(t),\\
		T(t,s(t))=T_M&x_1=s(t),\\
		T(0,x)=T_0(x)& x_1\in\R,\\
		\dot{s}(t)=\frac{1}{L}\left(K_S \partial_{x_1} T(t,s(t)^+)-K_L\partial_{x_1} T(t,s(t)^-)\right),
	\end{cases}
\end{equation*}
where $$ I_\alpha[T](t,x_1)=T^4(t,x_1)-\int_{s(t)}^\infty d\eta \frac{\alpha E_1(\alpha|x_1-\eta|)}{2}T^4(t,\eta) .$$
Notice that we have obtained a system of equations depending only on the space variable $ x_1 $. In order to simplify the reading, we write $ x $ instead of $ x_1 $.\\

Next, we see that we can assume without loss of generality $ C_S=K_S=4\pi\sigma\alpha=1 $. To this end we define $ \tau=\frac{4\pi\sigma\alpha}{C_S}t $ and $ \xi=\sqrt{\frac{4\pi\sigma\alpha}{K_S}}x $. Let us also consider $ T(t,x)=\tilde{T}(\tau,\xi) $ and $ \tilde{s}(\tau)=\sqrt{\frac{4\pi\sigma\alpha}{K_S}}s(t) $. In this way we obtain \[\partial_t T(t,x)=\frac{4\pi\sigma\alpha}{C_S}\partial \tau\tilde{T}(\tau,\xi) \quad \text{ and }\quad \partial^2_x T(t,x)=\frac{4\pi\sigma\alpha}{K_S} \partial^2_\xi \tilde{T}(\tau,\xi).\] Defining $ \tilde{\alpha}=\sqrt{\frac{K_S}{4\pi\sigma\alpha}}\alpha $ we see also that for the radiation term a change of variable gives
\begin{equation*}
	\begin{split}
		I_\alpha[T](t,x)=&T^4(t,x)-\int_{s(t)}^\infty d\eta \frac{\alpha E_1(\alpha|x-\eta|)}{2}T^4(t,\eta)\\= &	\tilde{T}^4(\tau,\xi)-\int_{s(t)}^\infty d\eta \frac{\alpha E_1\left(\alpha\sqrt{\frac{K_S}{4\pi\sigma\alpha}}\left|\sqrt{\frac{4\pi\sigma\alpha}{K_S}}(x-\eta)\right|\right)}{2}\tilde{T}^4\left(\tau,\sqrt{\frac{4\pi\sigma\alpha}{K_S}}\eta\right)\\
		=&	\tilde{T}^4(\tau,\xi))-\int_{\tilde{s}(\tau)}^\infty d\zeta \frac{\tilde{\alpha} E_1\left(\tilde{\alpha}\left|\xi-\zeta\right|\right)}{2}\tilde{T}^4\left(\tau,\zeta\right)=I_{\tilde{\alpha}}[\tilde{T}](\tau,\xi).
	\end{split}
\end{equation*}
We see that
$ \dot s(t)=\partial_t \sqrt{\frac{K_S}{4\pi\sigma\alpha}}\tilde{s}\left(\frac{4\pi\sigma\alpha}{C_S}t\right)=\frac{\sqrt{K_S4\pi\sigma\alpha}}{C_S}\dot{\tilde{s}}(\tau)  $ and also
$$ \frac{1}{L}\left(K_S \partial_{x_1} T(t,s(t)^+)-K_L\partial_{x_1} T(t,s(t)^-)\right)=\frac{K_S}{L}\sqrt{\frac{4\pi\sigma\alpha}{K_S}}\left(\partial_{\xi} \tilde{T}(\tau,\tilde{s}(\tau)^+)-\frac{K_L}{K_S}\partial_{\xi} \tilde{T}(\tau,\tilde{s}(\tau)^-)\right)  $$
Moreover, we define also  $ K=\frac{K_L}{K_S}$, $ C=\frac{C_L}{C_S} $, and finally $ \tilde{L}=\frac{L}{C_S} $. With the change of variable above we obtain under this notation
 \begin{equation}\label{syst.3}
 	\begin{cases}
 		C \partial_\tau \tilde{T}(\tau ,\xi)=K\partial_{\xi}^2 \tilde{T}(\tau,\xi)& \xi<\tilde{s}(\tau),\\
 		\partial_\tau \tilde{T}(\tau,\xi)=\partial_{\xi}^2 \tilde{T}(\tau,\xi)-I_{\tilde{\alpha}}[\tilde{T}](\tau,\xi)& \xi>\tilde{s}(t),\\
 		\tilde{T}(\tau,\tilde{s}(\tau))=T_M&\xi=\tilde{s}(\tau),\\
 		\tilde{T}(0,\xi)=\tilde{T}_0(\xi)& \xi\in\R,\\
 		\dot{\tilde{s}}(\tau)=\frac{1}{\tilde{L}}\left( \partial_{\xi} \tilde{T}(\tau,\tilde{s}(\tau)^+)-K \partial_{\xi} \tilde{T}(\tau,\tilde{s}(\tau)^-)\right).
 	\end{cases}
 \end{equation}
In order to simplify the notation we will write $ \xi=x $, $ t=\tau $, $ \tilde{T}=T $, $ \tilde{s}=s $, $ \tilde{\alpha}=\alpha $, and finally $ \tilde{L}=L $.\\

In the following we will study the problem \eqref{syst.3} in a spatial coordinate system which is at rest. Therefore we now perform a change of variable. To this end we define $ y=x-s(t) $ and we set $ T(t,x)=\tilde{T}(t,x-s(t))=\tilde{T}(t,y) $. The time derivative becomes
\[\partial_t T(t,x)= \partial_t \tilde{T}(t,y)-\dot{s}(t)\partial_y \tilde{T}(t,y) .\] Furthermore, the radiation term $ I[T] $ is
\begin{equation*}
\begin{split}
	I_\alpha[T](t,x)=&T^4(t,x)-\int_{s(t)}^\infty d\eta \frac{\alpha E_1(\alpha|x-\eta|)}{2}T^4(t,\eta)\\= &	\tilde{T}^4(t,x-s(t))-\int_{s(t)}^\infty d\eta \frac{\alpha E_1(\alpha|x-s(t)-(\eta-s(t))|)}{2}\tilde{T}^4(t,\eta-s(t))\\
	=&	\tilde{T}^4(t,x-s(t))-\int_{0}^\infty d\xi \frac{\alpha E_1(\alpha|x-s(t)-\xi|)}{2}\tilde{T}^4(t,\xi)\\
	=&	\tilde{T}^4(t,y)-\int_{0}^\infty d\xi \frac{\alpha E_1(\alpha|y-\xi|)}{2}\tilde{T}^4(t,\xi)=I_\alpha[\tilde{T}](t,y).
\end{split}
\end{equation*}
In order to simplify the notation we will write $ \tilde{T}=T $ and we obtain the following system  
\begin{equation}\label{syst.4}
	\begin{cases}
		\partial_t T(t,y)-\dot{s}(t)\partial_y T(t,y)=\frac{K}{C} \partial_{y}^2 T(t,y)& y<0,\\
		\partial_t T(t,y)-\dot{s}(t)\partial_y T(t,y)=\partial_{y}^2 T(t,y)-I_\alpha[T](t,y)& y>0,\\
		T(t,0)=T_M&y=0,\\
		T(0,y)=T_0(y)& y\in\R,\\
		\dot{s}(t)=\frac{1}{L}\left( \partial_{y} T(t,0^+)-K\partial_{y} T(t,0^-)\right).
	\end{cases}
\end{equation}
The rest of the paper is devoted to the study of the free boundary problem \eqref{syst.1} in its equivalent formulation \eqref{syst.4}.\\
\subsection{Some notation}\label{subs.notation}
Let $ U\subseteq\R $. Throughout this article we will denote by $ C^{k,\beta} (U)$ the space of $ k $-times continuous differentiable functions $ f $ with
\[\Arrowvert f\Arrowvert_{k,\beta}=\max\limits_{0\leq j\leq k}\left(\sup\limits_{U}\left|\partial_x^j f\right| \right)+\sup\limits_{x,y\in U}\frac{\left|\partial^k_x f(x)-\partial_x^kf(y)\right|}{|x-y|^\beta}<\infty.\]
We remark that $ f\in C^{k,\beta}(U) $ has all $ k $ derivatives bounded. 

In a similar way we consider the space $ \mathcal{C}^{n,k}_{t,x}\left((0,\tau)\times U\right) $ to be the space of functions $ f\in C^0\left((0,\tau)\times U\right)  $ with continuous derivatives $ \partial_t^jf\in C^0\left((0,\tau)\times U\right) $ and $ \partial_x^lf\in C^0\left((0,\tau)\times U\right) $ for any $ 0\leq j\leq n $ and $ 0\leq l\leq k $. Notice that the functions and their derivatives are continuous up to the boundary but their norms have not to be bounded.

Moreover, for $ 0\leq a<\tau $ the space $ C^{n+\beta, k+\delta}_{t,x}\left([a,\tau]\times U\right) $ is the space of functions $ f\in\mathcal{C}^{n,k}_{t,x}\left([a,\tau]\times U\right)  $ with \[\sup\limits_{[a,\tau]\times U}\left|\partial_t^j f\right|<\infty \text{ for }0\leq j\leq n \quad\text{ and }\sup\limits_{[a,\tau]times U}\left|\partial_x^l f\right|<\infty \text{ for }0\leq l\leq k\]and with\[\sup\limits_{ t,s\in(a,\tau),\;x\in U}\frac{\left|\partial^n_t f(t,x)-\partial_t^nf(s,x)\right|}{|t-s|^\beta}<\infty\quad\text{ and }\sup\limits_{x,y\in U,\; t\in(a,\tau)}\frac{\left|\partial^k_x f(t,x)-\partial_x^kf(t,y)\right|}{|x-y|^\delta}<\infty.\]
In particular, if $ 2(n+\beta)=k+\delta $ the derivatives of the function $ f\in C^{n+\beta, k+\delta}_{t,x}\left([a,\tau]\times U\right)  $ satisfy also $ \partial_t^j\partial_x^l f \in C^0\left([a,\tau]\times U\right)$ for all $ 2j+l<k $ and $ \beta,\delta\in[0,1) $.

Finally, when we write that the domain of the functions $ v_i $ is $ \R_\pm $ (or $ [0,\pm R] $) we always mean that $ v_1 $ is a function on $ \R_- $ (resp. on $ [-R,0] $) and that $ v_2 $ is a function on $ \R_+ $ (resp. on $ [0,R] $).
\section{Local well-posedness}\label{Sec.loc.Well.Pos}
In this section we prove the local well-posedness theory for the free boundary problem \eqref{syst.4}. Later, in Section \ref{subs.global} we will extend the result for a large class of initial data for which a global well-posedness result will be proved. In the following subsections we will show with a fixed-point argument the existence of a unique solution for small times. Further on we will show the regularity and some properties of the solutions.
\subsection{Fixed-point method}
We show the local well-posedness for the system \eqref{syst.4}. We will moreover denote by $ T_1 $ the temperature defined for  $ t>0 $ and $ y<0 $ and by $ T_2 $ the temperature defined for $ t>0 $ and $ y>0 $. Hence the system can be rewritten as
\begin{equation}\label{syst.5}
	\begin{cases}
		\partial_t T_1(t,y)-\dot{s}(t)\partial_y T_1(t,y)=\frac{K}{C} \partial_{y}^2 T_1(t,y)& y<0,\\
		\partial_t T_2(t,y)-\dot{s}(t)\partial_y T_2(t,y)=\partial_{y}^2 T_2(t,y)-I_\alpha[T_2](t,y)& y>0,\\
		T_1(t,0)=T_2(t,0)=T_M&y=0,\\
		T_1(0,y)=T_0(y) \text{ and } T_2(0,y)=T_0(y)& y\in\R,\\
		\dot{s}(t)=\frac{1}{L}\left( \partial_{y} T_2(t,0^+)-K\partial_{y} T_1(t,0^-)\right)&t>0\\
		s(0)=0.
	\end{cases}
\end{equation}

\begin{theorem}\label{thm.local.well.pos.}
	Let $ T_0\in C_b^{0,1}(\R) $ be bounded and positive with $ T_0(0)=T_M $, $ T_0(y)>T_M $ if $ y<0 $ and $ T_0(y)<T_M $ if $ y>0 $. Let also $ T_0\left.\right|_{\R_\pm}\in C^2(\R_\pm) $ with bounded first and second derivative. Then for a time $ t^*>0 $ small enough there exists a unique bounded solution $ (T_1,T_2, s)\in \mathcal{C}^{0,1}_{t,y}((0,t^*)\times\R_-)\times \mathcal{C}^{0,1}_{t,y}((0,t^*)\times\R_+)\times C^1((0,t^*)) $ solving the problem \eqref{syst.5} in distributional sense.
\end{theorem}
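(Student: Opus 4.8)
The natural approach is a Banach fixed-point argument on the free boundary function $s$. The plan is to fix a time horizon $t^*>0$ and work in the complete metric space
\[
X = \left\{ s \in C^1([0,t^*]) : s(0)=0,\ \dot s(0) = \tfrac{1}{L}\bigl(\partial_y T_0(0^+) - K\partial_y T_0(0^-)\bigr),\ \|\dot s\|_{C^0} \le M \right\}
\]
for a suitable constant $M$ depending on the data, equipped with the $C^1$ or a Hölder-in-time norm on $\dot s$. Given a candidate $s \in X$, I would first change variables back to the moving frame $x = y + s(t)$ (undoing the step that produced \eqref{syst.4}) so that the two parabolic equations are posed on the fixed half-lines; then solve, independently, the two linear initial-boundary value problems: the pure heat equation $C\partial_t T_1 = K\partial_y^2 T_1$ on $\{y<0\}$ with $T_1(t,0)=T_M$, $T_1(0,\cdot)=T_0$, and the equation $\partial_t T_2 = \partial_y^2 T_2 - I_\alpha[T_2]$ on $\{y>0\}$ with $T_2(t,0)=T_M$, $T_2(0,\cdot)=T_0$. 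The first is solved explicitly by the heat kernel plus a boundary-layer correction; the second requires an inner fixed point (or a Duhamel iteration) to handle the nonlocal, nonlinear term $I_\alpha[T_2]$ — here one uses that $T_0$ is bounded, $I_\alpha$ is a bounded perturbation with a locally Lipschitz dependence on $T_2$ in $L^\infty$ (since $E_1$ is integrable near $0$ and $T\mapsto T^4$ is Lipschitz on bounded sets), and classical parabolic $C^{0,1}_{t,y}$ (or Schauder) estimates to recover the regularity claimed. Once $T_1,T_2$ are produced, define the map $\Phi(s)(t) = \int_0^t \tfrac{1}{L}\bigl(\partial_y T_2(\tau,0^+) - K\partial_y T_1(\tau,0^-)\bigr)\,d\tau$, and the fixed point of $\Phi$ is the solution.

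The key steps, in order: (i) set up the function spaces and verify that, for $t^*$ small and $M$ large enough, $\Phi$ maps $X$ into itself — this needs a priori bounds on $\partial_y T_i(t,0^\pm)$, obtained from parabolic regularity applied to the linear problems with the drift term $\dot s(t)\partial_y T_i$ treated as a lower-order coefficient (bounded since $s\in X$); (ii) establish the contraction estimate: if $s_1,s_2 \in X$ give temperatures $T_i^{(1)}, T_i^{(2)}$, control $\|T_i^{(1)}-T_i^{(2)}\|$ and the difference of the boundary fluxes by $C(t^*)\|s_1-s_2\|$ with $C(t^*)\to 0$ as $t^*\to 0$; here the dependence of the equations on $s$ enters through the drift coefficient $\dot s$ and through the lower limit of integration in $I_\alpha$ (after undoing the frame change, through the domain), and one must show these contribute a factor that is small with $t^*$; (iii) conclude existence and uniqueness of $s$ by Banach's theorem, then read off $(T_1,T_2)$ and check they solve \eqref{syst.5} in the distributional sense, including the matching condition $T_1(t,0)=T_2(t,0)=T_M$ and the Stefan condition, which holds by construction of $\Phi$; (iv) verify the regularity $(T_1,T_2,s) \in \mathcal{C}^{0,1}_{t,y}\times\mathcal{C}^{0,1}_{t,y}\times C^1$ using that $\dot s$, now a fixed point, inherits continuity from the boundary fluxes, which are continuous by parabolic theory since $T_0|_{\R_\pm}\in C^2$ with bounded derivatives makes the compatibility at $t=0$ good enough for the normal derivative at $y=0$ to be continuous up to $t=0$.

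The sign conditions $T_2<T_M<T_1$ away from the interface are preserved on $[0,t^*]$ by the maximum principle applied to each parabolic equation (for $T_2$ one checks that the source $-I_\alpha[T_2]$ does not spoil the comparison; crucially, the term $-I_\alpha[T_2] = -T^4 + \int \tfrac{\alpha E_1}{2}T^4$ has a favorable structure — it is the difference of emission and absorption — which, together with the boundary condition and a barrier argument, keeps $T_2$ below $T_M$ for short times), so this part is essentially a separate and more elementary argument that I would state after the fixed point is constructed. The main obstacle I expect is step (ii), the contraction estimate, and specifically quantifying how the boundary fluxes $\partial_y T_i(t,0^\pm)$ depend on the drift $\dot s$: since differentiating a parabolic solution at the boundary is a borderline operation, one must either use the explicit representation formulas (single-layer potentials) to get an estimate of the form $|\partial_y T_i(t,0^\pm)[s_1] - \partial_y T_i(t,0^\pm)[s_2]| \lesssim t^{*\gamma}\|s_1-s_2\|$ for some $\gamma>0$, or work in a Hölder space $C^{(1+\beta)/2}_t$ for $\dot s$ where the parabolic estimates close cleanly. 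Getting the right power of $t^*$ here — and verifying that the nonlocal term $I_\alpha$ does not destroy it — is where the real work lies; everything else is standard parabolic theory plus Gronwall.
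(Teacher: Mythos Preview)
Your strategy is sound but follows a genuinely different route from the paper. The paper sets up a \emph{joint} Banach fixed point on the triple $(u_1,u_2,\dot s)$ in $\mathcal{C}^{0,1}_{t,y}\times\mathcal{C}^{0,1}_{t,y}\times C^0$: using the half-space Green's function $G$, it writes integral representations for $u_1$, $u_2$ and $\dot s$ simultaneously (with $\dot s\,\partial_y u_i$ and $I_\alpha[u_2+T_M]$ treated as source terms on the right-hand side), and shows the resulting operator $\mathcal{L}$ is a self-map and a contraction on a ball for $t^*$ small, via explicit bounds such as $\int_{\R_\pm}|\partial_\xi g(y,\xi,a(t-\tau))|\,d\xi\le (\pi a(t-\tau))^{-1/2}$. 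You instead iterate on $s$ alone, solving the two parabolic problems for each candidate $s$ (with an inner fixed point to handle the nonlinear nonlocal term in the $T_2$ equation) and then updating $s$ through the Stefan condition. Your scheme is more modular, but it requires a nested fixed point and---as you rightly flag---a delicate estimate on how the boundary fluxes $\partial_y T_i(t,0^\pm)$ depend on $\dot s$; the paper's joint formulation sidesteps precisely this, since $\dot s$ is updated alongside $u_1,u_2$ and the contraction constant comes out directly as an explicit power of $t^*$.

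One confusion worth correcting: you write ``change variables back to $x=y+s(t)$ \dots\ so that the two parabolic equations are posed on the fixed half-lines'', but \eqref{syst.5} is \emph{already} posed on the fixed half-lines $\R_\pm$; the drift term $\dot s(t)\partial_y$ is exactly the price of that shift. Undoing it would return you to moving domains $\{x<s(t)\}$, $\{x>s(t)\}$, not fixed ones. Either frame works, but you should be clear about which one you are in. Finally, the sign preservation $T_2<T_M<T_1$ is not part of this theorem in the paper; it is established separately afterwards via approximation by bounded-domain problems and the maximum principle.
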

\begin{proof}
We follow the same strategy used by Ruben\v{s}te\u{\i}n in \cite{Rubinstein} and by Friedman in \cite{Friedman5}. We will construct with the help of suitable Green's functions the (implicit) solution formula for $ T_1 $, $ T_2 $ and $ s $ and we will use a contraction argument in order to show the existence of a unique solution.\\

In order to simplify the computations we consider the equivalent problem for $ u_1:=T_1-T_M $ and $ u_2:=T_2-T_M $. Hence we consider the system
\begin{equation}\label{syst.6}
	\begin{cases}
		\partial_t u_1(t,y)-\dot{s}(t)\partial_y u_1(t,y)=\frac{K}{C} \partial_{y}^2 u_1(t,y)& y<0,\\
		\partial_t T_2(t,y)-\dot{s}(t)\partial_y u_2(t,y)=\partial_{y}^2 u_2(t,y)-I_\alpha[u_2+T_M](t,y)& y>0,\\
		u_1(t,0)=u_2(t,0)=0&y=0,\\
		u_1(0,y)=u_0(y) \text{ and } u_2(0,y)=u_0(y)& y\in\R,\\
		\dot{s}(t)=\frac{1}{L}\left( \partial_{y} u_2(t,0^+)-K\partial_{y} u_1(t,0^-)\right)&t>0\\
		s(0)=0,
	\end{cases}
\end{equation}
where $ u_0:=T_0-T_M $ satisfies $ u_0(0)=0 $ and $ u_0>0 $ if $ y<0 $ as well as $ u_0<0 $ if $ y>0 $.

Let\[ G(x,\xi, a(t-\tau)):= \Phi(x-\xi, a(t-\tau))-\Phi(x+\xi,a(t-\tau))\] be the Green's function for the half space $ \R_+ $, where $ \Phi(z,s)=\frac{1}{\sqrt{4\pi z}}\exp\left(-\frac{z^2}{4s}\right) $ is the fundamental solution of the heat equation. Recall that with the help of the Green's function $ G $ the solution of the following Cauchy problem on $ \R_+ $ or on $ \R_- $ 
	\begin{equation*}
		\begin{cases}
			\partial_t v-a\partial_x^2 v=f& x>0 \text{ resp. }x<0,\\
			v(0,t)=0& t>0,\\
			v(x,0)=g_0(x)& x>0 \text{ resp. }x<0
				\end{cases}
	\end{equation*}
has the integral representation
$$ v(t,x)=\int_0^\infty g_0(\xi)G(x,\xi,at)d\xi+ \int_0^\infty \int_0^t f(\xi,\tau)G(x,\xi,a(t-\tau))d\tau d\xi $$ on $ \R_+ $ and $$ v(t,x)=\int_{-\infty}^0 g_0(\xi)G(x,\xi,at)d\xi+ \int_{-\infty}^0 \int_0^t f(\xi,\tau)G(x,\xi,a(t-\tau))d\tau d\xi $$ on $ \R_- $.
Hence, we obtain for $ u_1 $ and $ u_2 $ the following identities considering $ \dot{s}(t)\partial_yu_1 $ resp. $ \dot{s}(t)\partial_yuu_2-I_\alpha[u_2+T_M] $ as sources
\begin{equation}\label{u1}
u_1(t,y)=\int_{-\infty}^0 u_0(\xi)G(y,\xi,\kappa t)d\xi+ \int_{-\infty}^0 \int_0^t \dot{s}(\tau)\partial_\xi u_1(\tau,\xi)G(y,\xi,\kappa(t-\tau))d\tau d\xi,
\end{equation}
where we used the notation $ \kappa=\frac{K}{C} $, and
\begin{multline}\label{u2}
	u_2(t,y)=\int_0^\infty u_0(\xi)G(y,\xi,t)d\xi+ \int_0^\infty \int_0^t \dot{s}(\tau)\partial_\xi u_2(\tau,\xi)G(y,\xi,(t-\tau))d\tau d\xi\\- \int_0^\infty \int_0^t I_\alpha[u_2+T_M](\tau,\xi)G(y,\xi,(t-\tau))d\tau d\xi.
\end{multline}
We now have to differentiate these expressions with respect to the spatial coordinate in order to find an expression for $ \dot{s} $. We recall that \[
\partial_y G(y,\xi,a(t-\tau))=-\partial_\xi \left(\Phi(y-\xi,a(t-\tau))+\Phi(y+\xi,a(t-\tau))\right)=-\partial_\xi g(y,\xi,a(t-\tau)),
\]
where $ g(y,\xi,a(t-\tau))=\Phi(y-\xi,a(t-\tau))+\Phi(y+\xi,a(t-\tau)) $. This implies on one hand 
\begin{equation}\label{du1}
	\partial_yu_1(t,y)=\int_{-\infty}^0 \partial_\xi u_0(\xi)g(y,\xi,\kappa t)d\xi- \int_{-\infty}^0 \int_0^t \dot{s}(\tau)\partial_\xi u_1(\tau,\xi)\partial_\xi g(y,\xi,\kappa(t-\tau))d\tau d\xi,
\end{equation}
where we integrated by parts \[-\int_{-\infty}^0 u_0(\xi) \partial_\xi g(y,\xi,\kappa t)d\xi=\int_{-\infty}^0 \partial_\xi u_0(\xi)g(y,\xi,\kappa t)d\xi\] since $ u_0(0)=0 $ and $ g\to 0 $ as $ |\xi|\to\infty $ for every fixed $ y $.

On the other hand we have also
\begin{multline}\label{du2}
	\partial_y u_2(t,y)=\int_0^\infty \partial_\xi u_0(\xi)g(y,\xi,t)d\xi- \int_0^\infty \int_0^t \dot{s}(\tau)\partial_\xi u_2(\tau,\xi)\partial_\xi g(y,\xi,(t-\tau))d\tau d\xi\\+ \int_0^\infty \int_0^t I_\alpha[u_2+T_M](\tau,\xi)\partial_\xi g(y,\xi,(t-\tau))d\tau d\xi.
\end{multline}
Hence, $ \dot{s}(t) $ is given by
\begin{multline}\label{s}
	\dot{s}(t)=\frac{1}{L}\left(\int_0^\infty \partial_\xi u_0(\xi)g(0^+,\xi,t)d\xi- \int_0^\infty \int_0^t \dot{s}(\tau)\partial_\xi u_2(\tau,\xi)\partial_\xi g(0^+,\xi,(t-\tau))d\tau d\xi\right.\\\left.+ \int_0^\infty \int_0^t I_\alpha[u_2+T_M](\tau,\xi)\partial_\xi g(0^+,\xi,(t-\tau))d\tau d\xi-K\int_{-\infty}^0 \partial_\xi u_0(\xi)g(0^-,\xi,\kappa t)d\xi\right.\\\left.+K \int_{-\infty}^0 \int_0^t \dot{s}(\tau)\partial_\xi u_1(\tau,\xi)\partial_\xi g(0^-,\xi,\kappa(t-\tau))d\tau d\xi\right).
\end{multline}
Equations \eqref{u1}, \eqref{u2} and \eqref{s} define the operator $ \mathcal{L}(u_1,u_2,\dot{s}) $, for which we will show that there exists a unique fixed-point in a suitable set. This will conclude the proof of the existence of a unique solution for small times. Indeed, since $ s(0)=0 $ the solution to the problem \eqref{syst.6} is given by $ \left(u_1,u_2,\int_0^t \dot{s}(\tau)d\tau\right) $. Before defining the space in which we will work and proving the contraction property for the operator $ \mathcal{L} $, we collect some key estimates.\\

First of all since $ \partial_\xi\Phi(y-\xi,a(t-\tau))=\frac{(\xi-y)}{2a(t-\tau)}\frac{1}{\sqrt{ 4\pi a (t-\tau)}} \exp\left(-\frac{|y-\xi|^2}{4a(t-\tau)}\right)$ we estimate
\begin{multline}\label{est.int.dg+}
\int_0^\infty 	\left|\partial_\xi g(y,\xi,a(t-\tau))\right| d\xi\leq \int_0^\infty \frac{|\xi-y|}{2a(t-\tau)}\frac{1}{\sqrt{ 4\pi a (t-\tau)}}\exp\left(-\frac{|y-\xi|^2}{4a(t-\tau)}\right)d\xi\\
+\int_0^\infty \frac{|\xi+y|}{2a(t-\tau)}\frac{1}{\sqrt{ 4\pi a (t-\tau)}}\exp\left(-\frac{|y+\xi|^2}{4a(t-\tau)}\right)d\xi\\
=\int_{-y}^\infty \frac{|\xi|}{2a(t-\tau)}\frac{1}{\sqrt{ 4\pi a (t-\tau)}}\exp\left(-\frac{|\xi|^2}{4a(t-\tau)}\right)d\xi+\int_y^\infty \frac{|\xi|}{2a(t-\tau)}\frac{1}{\sqrt{ 4\pi a (t-\tau)}}\exp\left(-\frac{|\xi|^2}{4a(t-\tau)}\right)d\xi\\
=\int_0^\infty \frac{\xi}{a(t-\tau)}\frac{1}{\sqrt{ 4\pi a (t-\tau)}}\exp\left(-\frac{|\xi|^2}{4a(t-\tau)}\right)d\xi=\frac{1}{\sqrt{ \pi a (t-\tau)}},
\end{multline}
where we used the change of coordinate $ \xi'=\xi-y $ resp. $ \xi'=\xi+y $ and the fact that the resulting function is even. In the very same way we can estimate also 
\begin{equation*}\label{est.int.dg-}
\int_{-\infty}^0 	\left|\partial_\xi g(y,\xi,a(t-\tau))\right| d\xi\leq \frac{1}{\sqrt{ \pi a (t-\tau)}}.
\end{equation*}
A direct consequence of these estimates are the following results
\begin{equation}\label{est.double.int.dg+}
\int_0^\infty \int_0^t	\left|\partial_\xi g(y,\xi,a(t-\tau))\right| d\xi d\tau\leq \int_0^t \frac{1}{\sqrt{ \pi a (t-\tau)}}d\tau=\frac{1}{\sqrt{ a\pi}}\int_0^t\frac{1}{\sqrt{\tau}}d\tau=\frac{2\sqrt{t}}{\sqrt{ a\pi}},
\end{equation}
where we used Fubini's theorem and \eqref{est.int.dg+}. Analogously we also have
\begin{equation}\label{est.double.int.dg-}
	\int_{-\infty}^0 \int_0^t	\left|\partial_\xi g(y,\xi,a(t-\tau))\right| d\xi d\tau\leq \frac{2\sqrt{t}}{\sqrt{ a\pi}}.
\end{equation}
Another important estimates are the following ones
\begin{multline}\label{est.int.G+}
	\int_{0}^\infty \left|G(y,\xi,a(t-\tau))\right|d\xi\leq 	\int_{0}^\infty \Phi(y-\xi,a(t-\tau))+\Phi(y+\xi,a(t-\tau))d\xi\\=2\int_{|y|}^\infty \Phi(\xi,a(t-\tau))d\xi+\int_{-|y|}^{|y|} \Phi(\xi,a(t-\tau))d\xi \\=2\int_0^\infty\Phi(\xi,a(t-\tau))d\xi=\int_{-\infty}^\infty \Phi(\xi,a(t-\tau))d\xi=1 ,
\end{multline}
where we used the change of variables $ \xi'=\xi-y $ resp. $ \xi'=\xi+y $ and the fact that $ \Phi $ is a non-negative even function with integral 1. The same holds on the negative real line \begin{equation}\label{est.int.G-}
	\int_{-\infty}^0 \left|G(y,\xi,a(t-\tau))\right|d\xi\leq 1 .
\end{equation}
Similarly, using the definition of $ g(y,\xi,a(t-\tau)) $ we compute
\begin{equation}\label{est.int.g+}
	\int_{0}^\infty \left|g(y,\xi,a(t-\tau))\right|d\xi\leq \int_{-\infty}^\infty \Phi(\xi,a(t-\tau))d\xi=1
\end{equation}
and 
\begin{equation}\label{est.int.g-}
	\int_{-\infty}^0 \left|g(y,\xi,a(t-\tau))\right|d\xi\leq \int_{-\infty}^\infty \Phi(\xi,a(t-\tau))d\xi=1.
\end{equation}
We define now the metric space for which we will apply the Banach fixed-point theorem. Let us first introduce the notation that we will use throughout this section. 
On $ \mathcal{C}^{0,1}_{t,y}\left((0,t^*)\times\R_\pm\right) $ we consider the norm $ \Arrowvert f\Arrowvert_{0,1}:=\max\{\Arrowvert f\Arrowvert_{C^0},\Arrowvert \partial_yf\Arrowvert_{C^0}\} $. Let $ \theta\in(0,1) $ and let  
\[C_1,\;C_2>\frac{\Arrowvert u_0\Arrowvert_{1}}{1-\theta}\text{ and }C_3>\frac{1+K}{L}\frac{\Arrowvert u_0\Arrowvert_{1}}{1-\theta}.\] We consider the following closed metric space
\begin{multline*}
		\mathcal{A}_{C_1,C_2,C_3}=\left\{(u_1,u_2,\dot{s})\in\mathcal{C}^{0,1}_{t,y}\left((0,t^*)\times\R_-\right)\times\mathcal{C}^{0,1}_{t,y}\left((0,t^*)\times\R_+\right)\times C^0((0,t^*)):\right.\\\; \Arrowvert u_1\Arrowvert_{0,1}\leq C_1,\;\Arrowvert u_2\Arrowvert_{0,1}\leq C_2,\;\Arrowvert \dot{s}\Arrowvert_{C^0}\leq C_3 \bigg\},
\end{multline*}
with the metric induced by the norm $ \Arrowvert (u_1,u_2,\dot{s})\Arrowvert_{\mathcal{A}}:=\Arrowvert u_1\Arrowvert_{0,1}+\Arrowvert u_2\Arrowvert_{0,1}+\Arrowvert \dot{s}\Arrowvert_{C^0} $.
We consider the following operator acting on $ \mathcal{A} $

\begin{center}
	\begin{tabular}{c c c c}
		$ \mathcal{L}: $&$ \mathcal{A}=	\mathcal{A}_{C_1,C_2,C_3} $&$ \to $& $ \mathcal{C}^{0,1}_{t,y}\left((0,t^*)\times\R_-\right)\times\mathcal{C}^{0,1}_{t,y}\left((0,t^*)\times\R_+\right)\times C^0((0,t^*)) $\\ &&&\\
		 &$ (u_1,u_2,\dot{s}) $&$ \mapsto $ & $ \left(\mathcal{L}_1((u_1,u_2,\dot{s})),\mathcal{L}_2((u_1,u_2,\dot{s})),\mathcal{L}_2((u_1,u_2,\dot{s}))\right), $
	\end{tabular}
\end{center}
where we defined according to \eqref{u1}, \eqref{u2} and \eqref{s}
\begin{equation*}
\mathcal{L}_1((u_1,u_2,\dot{s}))(t,y)=\int_{-\infty}^0 u_0(\xi)G(y,\xi,\kappa t)d\xi+ \int_{-\infty}^0 \int_0^t \dot{s}(\tau)\partial_\xi u_1(\tau,\xi)G(y,\xi,\kappa(t-\tau))d\tau d\xi,
\end{equation*}
\begin{multline*}
\mathcal{L}_2((u_1,u_2,\dot{s}))(t,y)=\int_0^\infty u_0(\xi)G(y,\xi,t)d\xi+ \int_0^\infty \int_0^t \dot{s}(\tau)\partial_\xi u_2(\tau,\xi)G(y,\xi,(t-\tau))d\tau d\xi\\- \int_0^\infty \int_0^t I_\alpha[u_2+T_M](\tau,\xi)G(y,\xi,(t-\tau))d\tau d\xi
\end{multline*}
and 
\begin{multline*}
\mathcal{L}_3((u_1,u_2,\dot{s}))(t)=\frac{1}{L}\left(\int_0^\infty \partial_\xi u_0(\xi)g(0^+,\xi,t)d\xi- \int_0^\infty \int_0^t \dot{s}(\tau)\partial_\xi u_2(\tau,\xi)\partial_\xi g(0^+,\xi,(t-\tau))d\tau d\xi\right.\\\left.+ \int_0^\infty \int_0^t I_\alpha[u_2+T_M](\tau,\xi)\partial_\xi g(0^+,\xi,(t-\tau))d\tau d\xi-K\int_{-\infty}^0 \partial_\xi u_0(\xi)g(0^-,\xi,\kappa t)d\xi\right.\\\left.+K \int_{-\infty}^0 \int_0^t \dot{s}(\tau)\partial_\xi u_1(\tau,\xi)\partial_\xi g(0^-,\xi,\kappa(t-\tau))d\tau d\xi\right).
\end{multline*}
With the help of \eqref{du1} and \eqref{du2} and using the properties of the Green's functions, it is not difficult to see that the operator is well-defined. We prove next that choosing a time $ t^*>0 $ small enough and for $ C_1,\;C_2,\;C_3 $ large enough the operator is a self-map. To this end, we need to estimate the norms of the three components of $ \mathcal{L} $.

We assumed \[\Arrowvert u_0\Arrowvert_{1}=\max\left\{\Arrowvert u_0\Arrowvert_{C^0},\Arrowvert \partial_yu_0\Arrowvert_{C^0(\R_-)},\Arrowvert \partial_yu_0\Arrowvert_{C^0(\R_+)}\right\}<\infty. \] Recall that $ \alpha>0 $, $ K>0 $, $ L>0 $ and $ \kappa>0$ are all constants. Combining the triangle inequality as well as H\"older's inequality with the estimate \eqref{est.int.G-} we can conclude
\begin{equation}\label{L1C0}
\begin{split}
		\Arrowvert \mathcal{L}_1((u_1,u_2,\dot{s}))\Arrowvert_{C^0}\leq& \Arrowvert u_0\Arrowvert_{C^0} \sup\limits_{0<t\leq t^*}\int_{-\infty}^0|G(y,\xi,\kappa t)|d\xi+\Arrowvert u_1\Arrowvert_{0,1}	\Arrowvert \dot{s}\Arrowvert_{C^0}\sup\limits_{0<t\leq T}\int_{-\infty}^0\int_0^t |G(y,\xi,\kappa(t-\tau))|d\tau d\xi\\
		\leq &  \Arrowvert u_0\Arrowvert_{1}+t^*\Arrowvert u_1\Arrowvert_{0,1}	\Arrowvert \dot{s}\Arrowvert_{C^0}.
\end{split}
\end{equation}
Before moving to the estimate for the second component of the operator $ \mathcal{L} $ we have to consider the radiation term $ I_\alpha $. Using $ \int_{-\infty}^\infty \frac{E_1(\xi)}{2}d\xi=1 $ and $ |a+b|^4\leq 8|a|^4+8|b|^4 $ we obtain
\begin{equation}\label{IC0}
	\begin{split}
		\Arrowvert I_\alpha[u_2+T_M]\Arrowvert_{C^0}=&\sup\limits_{0<t\leq t^*, y>0}\left| \int_0^\infty \frac{\alpha E_1(\alpha(y-\eta))}{2} (u_2(t,\eta)+T_M)^4d\eta-(u_2(t,y)+T_M)^4\right|\\
		\leq &  \Arrowvert (u_2+T_M)^4\Arrowvert_{C^0}\sup\limits_{y>0}\left(\int_0^\infty \frac{\alpha E_1(\alpha(y-\eta))}{2}d\eta+1\right)\\
		\leq &16\left(\Arrowvert u_2\Arrowvert_{}^4+T_M^4\right).
	\end{split}
\end{equation}
Hence, using now \eqref{est.int.G+} we obtain similarly as in \eqref{L1C0}
\begin{equation}\label{L2C0}
		\Arrowvert \mathcal{L}_2((u_1,u_2,\dot{s}))\Arrowvert_{C^0}
		\leq  \Arrowvert u_0\Arrowvert_{1}+t^*\left(\Arrowvert u_2\Arrowvert_{0,1}	\Arrowvert \dot{s}\Arrowvert_{C^0}+16\Arrowvert u_2\Arrowvert_{0,1}^4+16T_M^4 \right).
\end{equation}
We now estimate the norm of the derivative of the first two component of $ \mathcal{L} $. Notice that $ \partial_y\mathcal{L}_1(u_1,u_2,\dot) $ is given by the right hand side of \eqref{du1}, while $\partial_y\mathcal{L}_2(u_1,u_2,\dot) $ is given by the right hand side of \eqref{du2}. Hence, using this time \eqref{est.int.g-} and \eqref{est.double.int.dg-}, we obtain in a similar manner as \eqref{L1C0}
\begin{equation}\label{L1C1}
	\Arrowvert \partial_y \mathcal{L}_1((u_1,u_2,\dot{s}))\Arrowvert_{C^0}
	\leq  \Arrowvert u_0\Arrowvert_{1}+\frac{2}{\sqrt{\kappa \pi}}\sqrt{t^*}\Arrowvert u_1\Arrowvert_{0,1}	\Arrowvert \dot{s}\Arrowvert_{C^0}.
\end{equation}
Analogously, \eqref{est.int.g+}, \eqref{est.double.int.dg+} and \eqref{IC0} imply
\begin{equation}\label{L2C1}
	\Arrowvert \partial_y \mathcal{L}_2((u_1,u_2,\dot{s}))\Arrowvert_{C^0}
	\leq  \Arrowvert u_0\Arrowvert_{1}+\frac{2}{\sqrt{ \pi}}\sqrt{t^*}\left(\Arrowvert u_2\Arrowvert_{0,1}	\Arrowvert \dot{s}\Arrowvert_{C^0}+16\Arrowvert u_2\Arrowvert_{0,1}^4+16T_M^4 \right).
\end{equation}
Finally, combining \eqref{L1C1} and \eqref{L2C1} we have
\begin{equation}\label{L3C0}
	\Arrowvert \mathcal{L}_3((u_1,u_2,\dot{s}))\Arrowvert_{C^0}
	\leq \frac{1}{L}\left( (1+K) \Arrowvert u_0\Arrowvert_{1}+\frac{2}{\sqrt{ \pi}}\sqrt{t^*}\left(\Arrowvert u_2\Arrowvert_{0,1}	\Arrowvert \dot{s}\Arrowvert_{C^0} +\frac{K}{\sqrt{\kappa}}\Arrowvert u_1\Arrowvert_{0,1}	\Arrowvert \dot{s}\Arrowvert_{C^0}+16\Arrowvert u_2\Arrowvert_{0,1}^4+16T_M^4 \right)\right).
\end{equation}
Therefore, for $ (u_1,u_2,\dot{s})\in\mathcal{A} $ combining \eqref{L1C0}, \eqref{L2C0}, \eqref{L1C1}, \eqref{L2C1} and \eqref{L3C0} we obtain
\begin{equation*}\label{L101}
	\Arrowvert  \mathcal{L}_1((u_1,u_2,\dot{s}))\Arrowvert_{0,1}
	\leq  \Arrowvert u_0\Arrowvert_{1}+\left(t^*+\frac{2}{\sqrt{\kappa \pi}}\sqrt{t^*}\right)C_1C_3,
\end{equation*}
\begin{equation*}\label{L201}
	\Arrowvert  \mathcal{L}_2((u_1,u_2,\dot{s}))\Arrowvert_{0,1}
	\leq  \Arrowvert u_0\Arrowvert_{1}+\left(t^*+\frac{2}{\sqrt{ \pi}}\sqrt{t^*}\right)\left(C_2C_3+16C_2^4+16T_M^4\right),
\end{equation*}
and finally 
\begin{equation*}\label{L301}
	\Arrowvert  \mathcal{L}_3((u_1,u_2,\dot{s}))\Arrowvert_{C^0}
	\leq  \frac{1}{L}\left( (1+K) \Arrowvert u_0\Arrowvert_{1}+\frac{2}{\sqrt{ \pi}}\sqrt{t^*}\left(C_2C_3 +\frac{K}{\sqrt{\kappa}}C_1C_3+16C_2^4+16T_M^4 \right)\right).
\end{equation*}
Then defining
\[ t_1=\frac{1}{2}\min\left\{\frac{\theta}{C_3}, \frac{\theta^2 \kappa \pi}{8 C_3^2}\right\},\quad t_2=\frac{1}{6}\min\left\{\frac{\theta}{C_3}, \frac{\theta}{16C_2^3},\frac{\theta C_2}{16 T_M^4},\frac{1}{6}\left(\frac{\sqrt{\pi}\theta}{2C_3}\right)^2,\frac{1}{6}\left(\frac{\sqrt{\pi}\theta}{32C_2^3}\right)^2,\frac{1}{6}\left(\frac{\sqrt{\pi}\theta C_2}{32T_M^4}\right)^2\right\} \] and \[t_3=\frac{L^2\pi\theta^2}{64}\min\left\{\left(\frac{1}{C_2}\right)^2,\left(\frac{\sqrt{\kappa}}{KC_1}\right)^2,\left(\frac{C_3}{16 C_2^4}\right)^2,\left(\frac{C_3}{16 T_M^4}\right)^2\right\}\]
we conclude for $ t^*\leq \min\{t_1,t_2,t_3\} $ that\[ \Arrowvert  \mathcal{L}_1((u_1,u_2,\dot{s}))\Arrowvert_{0,1}\leq C_1,\quad\Arrowvert  \mathcal{L}_2((u_1,u_2,\dot{s}))\Arrowvert_{0,1}\leq C_2\quad \text{and}\quad \Arrowvert  \mathcal{L}_3((u_1,u_2,\dot{s}))\Arrowvert_{C^0}\leq C_3,\] and hence $ \mathcal{L} $ maps $ \mathcal{A} $ into itself.
We show now that for $t^*>0 $ small enough $ \mathcal{L} $ is also a contraction. To this end we assume $ (u_1,u_2,\dot{s}),(\bar u_1,\bar u_2,\dot{\bar s})\in\nolinebreak\mathcal{A} $. First of all we consider the radiation term. Using that $ a^4-b^4=(a-b)(a^3+a^2b+ab^2+b^3)=(a-b)p_3(a,b) $ and that $ p_3(a,b)\leq 2(|a|^3+|b|^3) $ we estimate
\begin{equation}\label{I-IC0}
	\begin{split}
		\Arrowvert& I_\alpha[u_2+T_M]-I_\alpha[\bar u_2+T_M]\Arrowvert_{C^0}\\=&\sup\limits_{0<t\leq t^*, y>0}\left| \int_0^\infty \frac{\alpha E_1(\alpha(y-\eta))}{2} (u_2(t,\eta)-\bar{u}_2(t,\eta))p_3(u_2+T_M,\bar{u}_2+T_M)(t,\eta)d\eta\right.\\&\quad\quad\quad\quad\quad-(u_2(t,y)-\bar{u}_2(t,y))p_3(u_2+T_M,\bar{u}_2+T_M)(t,y)\bigg|\\
		\leq&  2 \Arrowvert u_2-\bar{u}_2\Arrowvert_{0,1}\left(\Arrowvert |u_2+T_M|^3\Arrowvert_{0,1}+\Arrowvert |\bar u_2+T_M|^3\Arrowvert_{0,1}\right)\sup\limits_{y>0}\left(\int_0^\infty \frac{\alpha E_1(\alpha(y-\eta))}{2}d\eta+1\right)\\
		\leq &32\left(C_2^3+T_M^3\right)\Arrowvert u_2-\bar{u}_2\Arrowvert_{0,1}.
	\end{split}
\end{equation}
Hence, using triangle inequality as well as \eqref{est.int.G-} and \eqref{est.double.int.dg-} we see
\begin{equation*}\label{L1-L101}
	\begin{split}
		\Arrowvert \mathcal{L}_1((u_1,u_2,\dot{s}))-&\mathcal{L}_1((\bar u_1,\bar u_2,\dot{\bar s}))\Arrowvert_{0,1}\leq\sup\limits_{0<t\leq t^*,y>0}\left|\int_{-\infty}^0 \int_0^t \left(\dot{s}(\tau)\partial_\xi u_1(\tau,\xi)-\dot{\bar s}(\tau)\partial_\xi u_1(\bar \tau,\xi)\right)G(y,\xi,\kappa(t-\tau))d\tau d\xi\right|\\
		&+\sup\limits_{0<t\leq t^*,y>0}\left|\int_{-\infty}^0 \int_0^t \left(\dot{s}(\tau)\partial_\xi u_1(\tau,\xi)-\dot{\bar s}(\tau)\partial_\xi u_1(\bar \tau,\xi)\right)\partial_\xi g(y,\xi,\kappa(t-\tau))d\tau d\xi\right|\\\leq& \left(t^*+\frac{2}{\sqrt{\kappa\pi}}\sqrt{t^*}\right)\left(C_1\Arrowvert \dot{s}-\dot{\bar s}\Arrowvert_{C^0}+C_3 \Arrowvert u_1-\bar u_1\Arrowvert_{0,1}\right) .
	\end{split}
\end{equation*}
Similarly, using \eqref{I-IC0}, \eqref{est.int.G+} and \eqref{est.double.int.dg+} we have
\begin{equation*}\label{L2-L201}
	\begin{split}
		\Arrowvert \mathcal{L}_2((u_1,u_2,\dot{s}))-\mathcal{L}_2((\bar u_1,\bar u_2,\dot{\bar s}))\Arrowvert_{0,1}\leq& \left(t^*+\frac{2}{\sqrt{\pi}}\sqrt{t^*}\right)\left(C_2\Arrowvert \dot{s}-\dot{\bar s}\Arrowvert_{C^0}+C_3 \Arrowvert u_2-\bar u_2\Arrowvert_{0,1}\right)\\&+32(C_2^3+T_M^3)\left(t^*+\frac{2}{\sqrt{\pi}}\sqrt{v}\right)\Arrowvert u_2-\bar u_2\Arrowvert_{0,1} .
	\end{split}
\end{equation*}
Finally, combining the results for the derivatives we obtain
\begin{equation*}\label{L3-L3C0}
	\begin{split}
		\Arrowvert \mathcal{L}_3((u_1,u_2,\dot{s}))-\mathcal{L}_3((\bar u_1,\bar u_2,\dot{\bar s}))\Arrowvert_{C^0}\leq&\frac{2}{\sqrt{\kappa\pi}L}\sqrt{t^*}\left(C_1\Arrowvert \dot{s}-\dot{\bar s}\Arrowvert_{C^0}+C_3 \Arrowvert u_1-\bar u_1\Arrowvert_{0,1}\right)\\+& \frac{2}{\sqrt{\pi}L}\sqrt{t^*}\left(C_2\Arrowvert \dot{s}-\dot{\bar s}\Arrowvert_{C^0}+C_3 \Arrowvert u_2-\bar u_2\Arrowvert_{0,1}+32(C_2^3+T_M^3)\Arrowvert u_2-\bar u_2\Arrowvert_{0,1}\right).
	\end{split}
\end{equation*}
Hence, we have
\begin{equation*}\label{L-L}
	\begin{split}
		\Arrowvert \mathcal{L}((u_1,u_2,\dot{s}))-\mathcal{L}((\bar u_1,\bar u_2,\dot{\bar s}))\Arrowvert_{\mathcal{A}}\leq&\left(t^*+\frac{2}{\sqrt{\kappa\pi}}\sqrt{t^*}+\frac{2}{\sqrt{\kappa\pi}L}\sqrt{t^*}\right)C_3\Arrowvert u_1-\bar u_1\Arrowvert_{0,1}\\
		+&\left(t^*+\frac{2}{\sqrt{\pi}}\sqrt{t^*}+\frac{2}{\sqrt{\pi}L}\sqrt{t^*}\right)\left(C_3+32(C_2^3+T_M^3)\right)\Arrowvert u_2-\bar u_2\Arrowvert_{0,1}\\
		+\bigg[\bigg(t^*+&\left.\left.\frac{2}{\sqrt{\kappa\pi}}\sqrt{t^*}+\frac{2}{\sqrt{\kappa\pi}L}\sqrt{t^*}\right)C_1+\left(t^*+\frac{2}{\sqrt{\pi}}\sqrt{t^*}+\frac{2}{\sqrt{\pi}L}\sqrt{t^*}\right)C_2\right]\Arrowvert \dot{s}-\dot{\bar s}\Arrowvert_{C^0}.
	\end{split}
\end{equation*}
Let $ \lambda\in(0,1) $. We take
\begin{multline*}\label{T}
	t^*=\min\bigg\{t_1,t_2,t_3,\frac{\lambda}{3C_3}, \left(\frac{\lambda\sqrt{\kappa\pi}}{6C_3}\right)^2,\left(\frac{\lambda\sqrt{\kappa\pi}L}{6C_3}\right)^2 ,\frac{\lambda}{3\left(C_3+32(C_2^3+T_M^3)\right)},\left(\frac{\lambda\sqrt{\pi}}{6\left(C_3+32(C_2^3+T_M^3)\right)}\right)^2,\\\\\left(\frac{\lambda\sqrt{\pi}L}{6\left(C_3+32(C_2^3+T_M^3)\right)}\right)^2,\frac{\lambda}{3(C_1+C_2)}, \left(\frac{\lambda\sqrt{\pi}}{6(C_1\kappa^{-1}+C_2)}\right)^2,\left(\frac{\lambda\sqrt{\pi}L}{6(C_1\kappa^{-1}+C_2)}\right)^2 \bigg\}.
\end{multline*}
It is now easy to see that $ \mathcal{L} $ is a contraction with \[ 	\Arrowvert \mathcal{L}((u_1,u_2,\dot{s}))-\mathcal{L}((\bar u_1,\bar u_2,\dot{\bar s}))\Arrowvert_{\mathcal{A}}\leq\lambda	\Arrowvert (u_1,u_2,\dot{s})-(\bar u_1,\bar u_2,\dot{\bar s})\Arrowvert_{\mathcal{A}} .\] Thus, an application of Banach fixed-point theorem yields the existence of a unique solution $ (u_1,u_2,\dot s)\in\mathcal{A}_{C_1,C_2,C_3} $ to the fixed-point system defined by the equations \eqref{u1}, \eqref{u2} and \eqref{s}. Moreover, this is the unique solution in the sense that if $ (\bar u_1,\bar u_2,\dot {\bar s})\in\mathcal{A}_{\bar C_1,\bar C_2,\bar C_3} $ is another fixed-point solution on $ [0,\bar t^*] $ for $ (C_1,C_2,C_3)\ne (\bar C_1,\bar C_2,\bar C_3) $, then $ (u_1,u_2,\dot s)=(\bar u_1,\bar u_2,\dot {\bar s}) $ for all $ t\leq\{t^*,\bar{t}^*\} $. We notice that by construction $ \left(u_1,u_2,\int_0^t\dot s(\tau)d\tau\right) $ solves the problem \eqref{syst.6} in distributional sense. Notice that $ \dot{s} $ satisfies the Stefan condition strongly.
Thus, the theorem is proved.
\end{proof}
In the next subsection we prove that the unique distributional solution found in Theorem \ref{thm.local.well.pos.} is also a classical solution with H\"older regularity.
\subsection{Regularity}
In this section we show that the local distributional solutions $ u_1\in \mathcal{C}_{t,y}^{0,1}\left([0,T]\times \R_-\right) $ and $ u_2\in \mathcal{C}_{t,y}^{0,1}\left([0,T]\times \R_+\right) $ of the parabolic equations 
\begin{equation}\label{parabolic.eq}
	\begin{cases}
		\partial_t u_1(t,y)-\dot{s}(t)\partial_y u_1(t,y)=\frac{K}{C} \partial_{y}^2 u_1(t,y)& y<0,\\
		\partial_t u_2(t,y)-\dot{s}(t)\partial_y u_2(t,y)=\partial_{y}^2 u_2(t,y)-I_\alpha[u_2+T_M](t,y)& y>0,\\
		u_1(t,0)=u_2(t,0)=0&y=0,\\
		u_1(0,y)=u_0(y) \text{ and } u_2(0,y)=u_0(y)& y\in\R.
	\end{cases}
\end{equation}
are strong solutions, i.e. $ u_1\in \mathcal{C}_{t,y}^{1,2}\left((0,t^*]\times \R_-\right) $ and $ u_1\in \mathcal{C}_{t,y}^{1,2}\left((0,t^*]\times \R_+\right) $. Moreover, we will show that these solutions have also locally H\"older regularity in the sense that $  u_i\in C^{1+\delta/2,2+\delta}\left([\eps,t^*]\times \R_\pm\right)  $ for any $ \eps>0 $ and for some $ \delta\in (0,1) $. To this end, we will use the following classical result for the heat equation (cf. \cite{Ladyzenskaja} p.273).
\begin{prop}\label{reg.heat}
Let $ \Phi(x,t)=\frac{1}{\sqrt{4\pi t}}e^{-\frac{x^2}{4t}}\mathds{1}_{t>0} $ be the fundamental solution of the heat equation. Then for $ F\in \mathcal{C}_{t,y}^{\delta/2,\delta}\left([0,t^*]\times \R\right) $, $ \varphi\in C^{2,\delta}(\R) $ and $ g\in C^{1,\delta/2}([-\infty,t^*]) $ the following estimates are true
\begin{equation*}\label{F}
	\Arrowvert \Phi* F\Arrowvert_{1+\delta/2,2+\delta}\leq c \Arrowvert  F\Arrowvert_{\delta/2,\delta},
\end{equation*}
\begin{equation}\label{varphi}
	\Arrowvert \Phi*_y \varphi\Arrowvert_{1+\delta/2,2+\delta}\leq c \Arrowvert \varphi\Arrowvert_{\delta}
\end{equation}
and \begin{equation}\label{g}
	\Arrowvert \partial_y\Phi*_t g\Arrowvert_{1+\delta/2,2+\delta}\leq c \Arrowvert g\Arrowvert_{1+\delta},
\end{equation}
where in \eqref{varphi} we mean
\[\Phi*_y \varphi(t,y)=\int_\R \varphi(\xi)\Phi(y-\xi,t)d\xi\]
and in \eqref{g}\[\partial_y\Phi*_t g(t,y)=\int_{-\infty}^t g(\tau)\partial_y\Phi(y,t-\tau)d\tau.\]
Moreover, if $  g\in C^{1,\delta/2}([0,t^*]) $, then $ \partial_y\Phi*_t g\in \mathcal{C}_{t,y}^{1+\delta/2,2+\delta}\left([\eps,t^*]\times \R\right) $ for all $ \eps>0 $.
\end{prop}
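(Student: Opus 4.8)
\medskip
\noindent\textbf{Proof plan.} All three bounds are classical Schauder estimates for heat potentials, and the plan is to reduce each one to the same elementary mechanism and then carry out the (standard but lengthy) near/far estimates exactly as in \cite{Ladyzenskaja}. Write $w=\Phi* F$, $v=\Phi*_y\varphi$ and $h=\partial_y\Phi*_t g$ for the three potentials. Each of $w,v,h$ solves the heat equation away from its data, and $\partial_t$ of each equals $\partial_y^2$ of it plus the source ($F$ for $w$, and $0$ for $v$ and $h$), so controlling all of $\Arrowvert\cdot\Arrowvert_{1+\delta/2,2+\delta}$ reduces to controlling $\partial_y^2$. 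First I would observe that, after differentiating twice in $y$ under the integral sign, one may use the cancellation identities
\[
\int_\R\partial_y^2\Phi(y-\xi,s)\,d\xi=0,\qquad \int_\R\Phi(y-\xi,s)\,d\xi=1,\qquad \int_0^\infty\partial_y^2\Phi(y,s)\,ds=0\ \ (y\neq 0),
\]
the last of which follows from $\partial_s\Phi=\partial_y^2\Phi$. These allow us to subtract the value of the datum at the base point, e.g.
\[
\partial_y^2 w(t,y)=\int_0^t\!\!\int_\R\partial_y^2\Phi(y-\xi,t-\tau)\bigl(F(\xi,\tau)-F(y,\tau)\bigr)\,d\xi\,d\tau,
\]
and similarly for $v$ (single time layer $\tau=0$, increment $\varphi(\xi)-\varphi(y)$) and for $h$ (after one integration by parts in $\tau$, which moves a derivative onto $g$ — this is why a first-order Hölder norm of $g$, the $\Arrowvert g\Arrowvert_{1+\delta}$ in \eqref{g}, must enter — and produces an increment of $g$). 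The Hölder continuity of the datum then absorbs the otherwise non-integrable pointwise decay $|\partial_y^j\Phi(z,s)|\le c\,s^{-(j+1)/2}e^{-cz^2/s}$.

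\medskip
\noindent Next I would read off the three constituents of the norm from such a representation. Integrating in $\xi$ with $|F(\xi,\tau)-F(y,\tau)|\le\Arrowvert F\Arrowvert_{\delta/2,\delta}\,|y-\xi|^\delta$ leaves a factor $(t-\tau)^{\delta/2-1}$, whose $\tau$-integral over $[0,t^*]$ is $\lesssim (t^*)^{\delta/2}$; this gives the $C^0$ bound on $\partial_y^2 w$, hence on $\partial_t w$. For the $C^\delta_y$ and $C^{\delta/2}_t$ seminorms of $\partial_y^2 w$ I would compare the potential at two points $y,y'$ (resp. $t,t'$) and split the $(\xi,\tau)$-integration into the ``near'' set $\{t-\tau<|y-y'|^2\}$, where the two contributions are estimated separately, and the ``far'' set $\{t-\tau>|y-y'|^2\}$, where one inserts the Lipschitz-in-$y$ bound $|\partial_y^2\Phi(y-\xi,s)-\partial_y^2\Phi(y'-\xi,s)|\le c\,|y-y'|\,s^{-2}e^{-c(y-\xi)^2/s}$ together with the same increment trick; in both sets the $\tau$-integral converges and reproduces precisely the powers $|y-y'|^\delta$ and $|t-t'|^{\delta/2}$. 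The very same scheme, with the kernel $\partial_y^2\Phi(y-\xi,t)$ for \eqref{varphi} (where one may alternatively differentiate directly, $\partial_y^2 v=\Phi*_y(\partial_y^2\varphi)$, and fall back on bounded Hölder data) and the kernel $\partial_y^3\Phi(y,t-\tau)$ for \eqref{g}, yields the remaining two estimates. The hard part will be exactly this near/far decomposition — making it uniform in $(t,y)$, handling the mixed space–time comparisons, and coping with the stronger singularity of the single-layer kernel $\partial_y^3\Phi(y,t-\tau)$ as $y\to 0$ in \eqref{g}; the rest is bookkeeping.

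\medskip
\noindent For the final assertion, when $g$ is only given on $[0,t^*]$ the plan is to extend it to $\tilde g\in C^{1,\delta/2}((-\infty,t^*])$ with comparable norm by a standard one-variable Hölder extension, apply \eqref{g} to $\tilde g$ so that $\partial_y\Phi*_t\tilde g\in\mathcal{C}_{t,y}^{1+\delta/2,2+\delta}((-\infty,t^*]\times\R)$, and then note that for $t\ge\eps$
\[
\partial_y\Phi*_t g(t,y)-\partial_y\Phi*_t\tilde g(t,y)=-\int_{-\infty}^{0}\tilde g(\tau)\,\partial_y\Phi(y,t-\tau)\,d\tau
\]
only involves the kernel at times $t-\tau\ge\eps$, hence is $C^\infty$ in $(t,y)$ on $[\eps,t^*]\times\R$ with all $(t,y)$-derivatives bounded and integrable in $\tau$. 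Adding the two pieces gives $\partial_y\Phi*_t g\in\mathcal{C}_{t,y}^{1+\delta/2,2+\delta}([\eps,t^*]\times\R)$ for every $\eps>0$, which is the claim.
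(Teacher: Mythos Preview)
Your proposal is correct and follows exactly the classical Schauder-potential approach from \cite{Ladyzenskaja} that the paper itself invokes: the paper does not supply its own proof of this proposition but simply cites it as a known result (p.~273 of \cite{Ladyzenskaja}), adding only the one-line remark that the final assertion about $g\in C^{1,\delta/2}([0,t^*])$ can be obtained by following the same estimates. Your extension argument for that last part is a clean way to make the remark precise.
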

We remark that the last statement can be proved following the estimates in \cite{Ladyzenskaja}.
The proof of the regularity of the solutions $ u_1 $ and $ u_2 $ follows from classical parabolic theory. Nevertheless, we recall the key estimates that we will use.\\

\begin{lemma}\label{bnd.value.estimates}
	Let $ \varphi\in C^{1,1}(\R) $. Then
	\begin{equation}\label{bnd.value.est.1}
		\varphi *_y\Phi(\cdot,t)\in \mathcal{C}_{t,y}^{1/2,1+1}([0,t^*]\times \R) 
	\end{equation}
with \[[\varphi *_y\Phi]_{t,1/2}\leq \Arrowvert\varphi'\Arrowvert_\infty\text{ and }[\varphi' *_y\Phi]_{y,\text{Lip}}\leq \Arrowvert\varphi'\Arrowvert_\infty. \]
Moreover, 	\[\varphi *_y\Phi(\cdot,t)\in \mathcal{C}_{t,y}^{1+1/2,2+1}([\eps,t^*]\times \R)\] 
for any $ \eps>0. $
\begin{proof}
	We see that by a change of variables $ \eta=\frac{\xi-y}{\sqrt{t}} $ we obtain
	\[\varphi *_y\Phi(y,t)=\int_\R \varphi(y+\sqrt{t}\eta)\frac{e^{-\eta^2/4}}{\sqrt{4\pi}}d\eta.\]
	A simple computations shows \eqref{bnd.value.est.1} as well as the estimates for the H\"older seminorms. We remark that $ |\sqrt{a}-\sqrt{b}|\leq |a-b|^{1/2} $ for all $ a,b>0 $.
	In  a similar way we can also see that for any $ \eps>0 $ the function $ \varphi *_y\Phi $ has the claimed higher H\"older regularity on $ [\eps,t^*]\times \R $. For example, we see integrating by parts that
	\[\left|\partial_y^2\int_\R \varphi(\xi)\Phi(y-\xi,t)d\xi\right|=\left|\partial_y\int_\R \varphi'(\xi)\Phi(y-\xi,t)d\xi\right|=\left|\int_\R \varphi'(y+\sqrt{t}\eta)\frac{\eta e^{-\eta^2/4}}{\sqrt{4\pi t}}d\xi\right|\leq\frac{\sqrt{2}\Arrowvert\varphi'\Arrowvert_\infty}{\sqrt{\eps}}. \]
	In a similar way we can prove the estimates for $ \partial_t \varphi *_y\Phi  $ as well as the one for the H\"older seminorms.
\end{proof}
\end{lemma}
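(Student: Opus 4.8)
The plan is to push everything through the rescaled form of the convolution. Writing $\rho(\eta)=e^{-\eta^{2}/4}/\sqrt{4\pi}$ for the heat kernel at time one, the substitution $\eta=(\xi-y)/\sqrt t$ gives, for $t>0$,
\[(\varphi *_y\Phi)(y,t)=\int_{\R}\varphi(y+\sqrt t\,\eta)\,\rho(\eta)\,d\eta,\]
with $\int_{\R}\rho=1$, $\int_{\R}\eta\,\rho=0$, $\int_{\R}(\tfrac14\eta^{2}-\tfrac12)\rho=0$, and all moments of $\rho$ finite. Since $\varphi$ and $\varphi'$ are bounded I may differentiate this under the integral sign, and one integration by parts in $\xi$ yields the identity $\partial_y(\varphi *_y\Phi)=\varphi'*_y\Phi$ (the boundary terms at $\pm\infty$ vanish because $\varphi$ is bounded and $\Phi$ decays). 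Every bound below follows from these two observations together with the Lipschitz continuity of $\varphi'$ and the elementary inequality $|\sqrt a-\sqrt b|\le|a-b|^{1/2}$.

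First assertion. From the representation, $\partial_y(\varphi *_y\Phi)(y,t)=\int_{\R}\varphi'(y+\sqrt t\,\eta)\rho(\eta)\,d\eta$ is bounded by $\Arrowvert\varphi'\Arrowvert_{\infty}$; subtracting this at $y_{1}$ and $y_{2}$ and using $|\varphi'(a)-\varphi'(b)|\le[\varphi']_{\mathrm{Lip}}|a-b|$ controls its $y$-Lipschitz seminorm; and dominated convergence shows that $\varphi *_y\Phi$ and $\partial_y(\varphi *_y\Phi)$ extend continuously to $t=0$ with limits $\varphi$ and $\varphi'$, so both lie in $C^{0}([0,t^{*}]\times\R)$. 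For the time seminorm I would write
\[(\varphi *_y\Phi)(y,t_{1})-(\varphi *_y\Phi)(y,t_{2})=\int_{\R}\bigl[\varphi(y+\sqrt{t_{1}}\,\eta)-\varphi(y+\sqrt{t_{2}}\,\eta)\bigr]\rho(\eta)\,d\eta,\]
estimate the bracket by $\Arrowvert\varphi'\Arrowvert_{\infty}|\sqrt{t_{1}}-\sqrt{t_{2}}|\,|\eta|\le\Arrowvert\varphi'\Arrowvert_{\infty}|t_{1}-t_{2}|^{1/2}|\eta|$, and integrate against $\rho$; this gives $[\varphi *_y\Phi]_{t,1/2}\le c\,\Arrowvert\varphi'\Arrowvert_{\infty}$, which is \eqref{bnd.value.est.1}.

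Second assertion. I would work on $[\eps,t^{*}]\times\R$, where $\Phi(\cdot,t)$ is smooth with $y$-derivatives of size a power of $t^{-1/2}\le\eps^{-1/2}$, so derivatives may be transferred onto the kernel. Starting from $\partial_y(\varphi *_y\Phi)=\varphi'*_y\Phi$, the task reduces to bounding $y$-derivatives of the heat convolution of the $C^{0,1}$ function $\varphi'$. The device is the cancellation $\int_{\R}\partial_y^{k}\Phi(y-\xi,t)\,d\xi=0$ for $k\ge1$, which lets me insert $-\varphi'(y)$ under the integral and bound $\partial_y^{k+1}(\varphi *_y\Phi)$ by $[\varphi']_{\mathrm{Lip}}\int_{\R}|y-\xi|\,|\partial_y^{k}\Phi(y-\xi,t)|\,d\xi$; a Gaussian moment computation evaluates the last integral as $c_{k}\,t^{(1-k)/2}$. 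In particular $\partial_y^{2}(\varphi *_y\Phi)$ is bounded by $[\varphi']_{\mathrm{Lip}}$ (uniformly, even up to $t=0$) and $\partial_y^{3}(\varphi *_y\Phi)$ by $c\,[\varphi']_{\mathrm{Lip}}\eps^{-1/2}$, so $\partial_y^{2}(\varphi *_y\Phi)$ is Lipschitz in $y$ on $[\eps,t^{*}]\times\R$. Time derivatives come for free: $\Phi$ being the heat kernel, $\partial_t(\varphi *_y\Phi)=\partial_y^{2}(\varphi *_y\Phi)$ for $t>0$, hence $\partial_t^{j}\partial_y^{l}(\varphi *_y\Phi)=\partial_y^{2j+l}(\varphi *_y\Phi)$ is bounded on $[\eps,t^{*}]\times\R$ for all $2j+l\le4$; in particular $\partial_t(\varphi *_y\Phi)$ is Lipschitz — hence $\tfrac12$-Hölder — in $t$ there, since $\partial_t\partial_y^{2}(\varphi *_y\Phi)=\partial_y^{4}(\varphi *_y\Phi)$ is bounded. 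Collecting these bounds gives $\varphi *_y\Phi\in\mathcal{C}^{1+1/2,2+1}_{t,y}([\eps,t^{*}]\times\R)$.

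The one point requiring care, rather than a citation, is the endpoint exponent: the generic ``gain of two spatial derivatives'' estimate from classical parabolic theory (the $\varphi$-part of Proposition \ref{reg.heat}) is available only for Hölder exponent strictly below $1$, whereas here $\varphi$ sits at exponent $1$ and one wants $\partial_y^{2}(\varphi *_y\Phi)$ in the Lipschitz class — this is precisely why the statement is confined to $[\eps,t^{*}]$ and why the Gaussian must be tracked explicitly. Everything else is bookkeeping: verifying the cancellations $\int\eta\,\rho=0$, $\int(\tfrac14\eta^{2}-\tfrac12)\rho=0,\dots$ (which produce the correct power of $t$ at each order) and checking that differentiation under the integral and the integrations by parts are legitimate because $\varphi,\varphi'$ are bounded and the Gaussian and its derivatives decay.
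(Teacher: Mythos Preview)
Your proof is correct and follows essentially the same route as the paper: the rescaled representation $(\varphi *_y\Phi)(y,t)=\int_\R\varphi(y+\sqrt t\,\eta)\rho(\eta)\,d\eta$, differentiation under the integral, and the inequality $|\sqrt a-\sqrt b|\le|a-b|^{1/2}$ for the time--H\"older seminorm. The one technical difference is in the second assertion: the paper bounds $\partial_y^2(\varphi *_y\Phi)$ directly from $\Arrowvert\varphi'\Arrowvert_\infty$ via the rescaled kernel (incurring an $\eps^{-1/2}$ factor), whereas you subtract $\varphi'(y)$ and exploit the cancellation $\int\partial_y^k\Phi=0$ together with the Lipschitz continuity of $\varphi'$, which gives a bound uniform in $t$; your systematic use of $\partial_t=\partial_y^2$ to reduce all time derivatives to spatial ones is also a clean way to organise the bookkeeping that the paper leaves implicit.
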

Further we will use also the following result
\begin{lemma}\label{interior.estimate}
	Let $ G(y,\xi,at)=\Phi(y-\xi,t)-\Phi(y+\xi,at) $ be the fundamental solution of the heat equation in the half-space. Let also $ f_\pm\in C^{0,0}_{t,y}\left([0,t^*]\times \R_\pm\right) $. Then 
	\begin{equation*}\label{interior.hoelder.1}
	f_\pm*G\in \mathcal{C}^{\alpha/2,1+\beta}_{t,y}\left([0,t^*]\times \R_\pm\right)
	\end{equation*} 
for any $ \alpha,\;\beta \in(0,1) $, where we define
\[f_\pm*G(t,y)=\int_0^t \int_{\R_\pm} f_\pm(\tau,\xi)G(y,\xi,a(t-\tau))d\xi d\tau.\]
Moreover, the norm of  $ f_\pm*G $ in the space $ \mathcal{C}^{\alpha/2,1+\beta}_{t,y}\left([0,t^*]\times \R_\pm\right) $ depends only on $ \Arrowvert f\Arrowvert_\infty$, $ t^* $, $ \alpha,\beta $.
\begin{proof}
	It is enough to prove this Lemma only for $ f=f_+\in \mathcal{C}^{0,0}_{t,y}\left([0,t^*]\times \R_+\right) $ and $ a=1 $. Using that $ -\partial_\xi g(y,\xi,t)=\partial_yG(y,\xi,t) $ and equation \eqref{est.double.int.dg+} we have already seen that
	$ f*G\in \mathcal{C}^{0,1}_{t,y}\left([0,t^*]\times \R_+\right) $ with norm bounded by $\Arrowvert f*G\Arrowvert_{0,1}\leq C\max\{t^*,\sqrt{t*}\}\Arrowvert f\Arrowvert_\infty $. Now we only need to show the H\"older regularity of $ f*G $. 
	Let hence $ 0<s<t<t^* $. Since if $ s<t-s $, then $ |t|+|s|\leq 3|t-s| $ so that $ |f*G(t,y)-f*G(s,y)|\leq 3|t-s|\Arrowvert f\Arrowvert_\infty $ by \eqref{est.int.G+}, it is enough to consider $ s>t-s $. 
	\begin{equation*}
	\begin{split}
		\left|\int_0^t \right.&\left.\int_{\R_+} f(\tau,\xi)\Phi(y\pm\xi,t-\tau)d\xi d\tau-\int_0^s \int_{\R_+} f(\tau,\xi)\Phi(y\pm\xi,s-\tau)d\xi d\tau \right|\\\leq&\Arrowvert f\Arrowvert_\infty
	\int_{s-(t-s)}^t\int_{\R_+}	\left|\Phi(y\pm\xi,t-\tau)\right|d\xi d\tau+\Arrowvert f\Arrowvert_\infty
	\int_{s-(t-s)}^s\int_{\R_+}	\left|\Phi(y\pm\xi,s-\tau)\right|d\xi d\tau\\&+\Arrowvert f\Arrowvert_\infty\int_0^s\int_{\R_+} \left|\Phi(y\pm\xi,t-\tau)-\Phi(y\pm\xi,s-\tau)\right|d\xi d\tau
	\\\leq& 3\Arrowvert f\Arrowvert_\infty|t-s|+\Arrowvert f\Arrowvert_\infty\int_0^{s-(t-s)}\int_{\R_+}\Phi(y\pm\xi,t-\tau)\frac{|y\pm\xi|^2}{4(t-\tau)}\frac{t-s}{s-\tau}+\Phi(y\pm\xi,s-\tau)\frac{\sqrt{t-s}}{\sqrt{t-\tau}}d\xi d\tau\\
	\leq& \Arrowvert f\Arrowvert_\infty \left(4+\sqrt{2}\right)\sqrt{t^*}|t-s|^{1/2},
	\end{split}
	\end{equation*}
where we used that $ t-s\leq s-\tau $ if $ \tau<s-(t-s) $. We turn to the H\"older continuity of the spatial derivative. Let us consider $ 0\leq x<y $. Since $ \partial_y f*G $ is uniformly bounded in $ [0,t^*]\times \R_+ $, we have only to show the H\"older condition for $ |x-y|<1 $. Let hence $ |x-y|<1 $. If $ t\leq|y-x|^2 $ then by \eqref{est.double.int.dg+} we see that
\[	\left|\int_0^t \int_{\R_+} f(\tau,\xi)\left(\partial_\xi g(y,\xi,t-\tau)-\partial_\xi g(x,\xi,t-\tau)\right)d\xi d\tau t\right|\leq C\Arrowvert f\Arrowvert_\infty \sqrt{t}\leq C\Arrowvert f\Arrowvert_\infty |x-y|. \]
Let now $ t>|x-y|^2 $. Using that if $ |y\pm\xi|>|x\pm\xi| $, the following estimate holds
\begin{equation*}
	\begin{split}
	\left|\partial_\xi \Phi(y\pm \xi,\tau)-\partial_\xi \Phi(x\pm\xi,\tau)\right|\leq & \frac{|x-y|}{2\tau}\Phi(y\pm \xi,\tau)+|x-y|\frac{|x\pm\xi}{2\tau}\Phi(x\pm\xi,\tau)\frac{|y\pm\xi|+|x\pm\xi|}{4\tau}\\
	\leq &\frac{|x-y|}{2\tau}\Phi(y\pm \xi,\tau)+\Phi\left(\frac{x\pm\xi}{\sqrt{2}},\tau\right)\left(\frac{|x-y|^2}{4\tau^{3/2}}+\frac{|x-y|}{\tau}\right).
	\end{split}
\end{equation*}
We estimate for any $ \beta\in(0,1) $
\begin{equation*}
	\begin{split}
		\int_0^t\int_{\R_+}& |f(t-\tau,\xi)|
		\left|\partial_\xi \Phi(y\pm \xi,\tau)-\partial_\xi \Phi(x\pm\xi,\tau)\right|d\xi d\tau\\\leq&\int_0^{|x-y|^2}\int_{\R_+} |f(t-\tau,\xi)|
		\left|\partial_\xi \Phi(y\pm \xi,\tau)-\partial_\xi \Phi(x\pm\xi,\tau)\right|d\xi d\tau\\&+\int_{|x-y|^2}^t\int_{\R_+} |f(t-\tau,\xi)|
		\left|\partial_\xi \Phi(y\pm \xi,\tau)-\partial_\xi \Phi(x\pm\xi,\tau)\right|d\xi d\tau\\ \leq&2\Arrowvert f\Arrowvert_\infty\int_0^{|x-y|^2} \int_{\R}\left|\partial_\xi \Phi( \xi,\tau)\right|d\xi d\tau+2C\Arrowvert f\Arrowvert_\infty|x-y|\int_{|x-y|^2}^t \frac{1}{\tau}d\tau\\\leq &
		2\Arrowvert f\Arrowvert_\infty|x-y|+2C\Arrowvert f\Arrowvert_\infty|x-y|^{\beta}\int_{|x-y|^2}^t \tau^{-\frac{1+\beta}{2}}d\tau
		\leq \tilde{C} \frac{\Arrowvert f\Arrowvert_\infty }{1-\beta} \max\left\{1,(t^*)^{(1-\beta)/2}\right\}|x-y|^\beta,
	\end{split}
\end{equation*}
where we also used that $ |x-y|<1 $. 
\end{proof}
\end{lemma}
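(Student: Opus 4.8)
The plan is to reduce the statement to a model configuration and then supply only the two Hölder seminorms that are not yet available. First I would reduce: by the reflection $\xi\mapsto-\xi$, $y\mapsto-y$ it suffices to treat $f=f_+$ on $\R_+$, and after rescaling time the positive constant $a$ may be normalized to $1$; write $f*G$ for $f_+*G$. Since $-\partial_\xi g(y,\xi,t-\tau)=\partial_y G(y,\xi,t-\tau)$, the bounds \eqref{est.int.G+} and \eqref{est.double.int.dg+} already give $f*G\in\mathcal{C}^{0,1}_{t,y}([0,t^*]\times\R_+)$ with $\Arrowvert f*G\Arrowvert_{0,1}\le C\max\{t^*,\sqrt{t^*}\}\Arrowvert f\Arrowvert_\infty$; in particular $f*G$ and $\partial_y(f*G)$ are bounded and vanish at $t=0$. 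Hence only two estimates remain: $\alpha/2$--Hölder continuity of $f*G$ in $t$ (uniformly in $y$) and $\beta$--Hölder continuity of $\partial_y(f*G)$ in $y$ (uniformly in $t$).

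For the temporal estimate I would fix $0\le s<t\le t^*$, set $h=t-s$, and split $(f*G)(t,y)-(f*G)(s,y)$ into $\int_s^t\!\int_{\R_+}f\,G(y,\xi,t-\tau)\,d\xi\,d\tau$, which is $\le\Arrowvert f\Arrowvert_\infty h$ by \eqref{est.int.G+}, plus $\int_0^s\!\int_{\R_+}f\,\bigl(G(y,\xi,t-\tau)-G(y,\xi,s-\tau)\bigr)\,d\xi\,d\tau$. In the latter I cut the $\tau$--integral at $\tau=s-h$: the slice $(s-h,s)$ has time length $\le 2h$, so \eqref{est.int.G+} makes its contribution $\le 2\Arrowvert f\Arrowvert_\infty h$, while on $(0,s-h)$ I use the elementary inequality $\int_{\R}|\Phi(z,t-\tau)-\Phi(z,s-\tau)|\,dz\le C\sqrt{h/(s-\tau)}$, valid for $s-\tau\ge h$ (it follows from $|\sqrt a-\sqrt b|\le\sqrt{|a-b|}$ on the prefactors, $|e^{-u}-e^{-v}|\le|u-v|e^{-\min(u,v)}$ on the exponents, and $\int_{\R}z^2\Phi(z,r)\,dz=2r$). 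Since $h/(s-\tau)\le1$ there, $\sqrt{h/(s-\tau)}\le(h/(s-\tau))^{\alpha/2}$, and $\int_0^{s-h}(h/(s-\tau))^{\alpha/2}\,d\tau\le C_\alpha(t^*)^{1-\alpha/2}h^{\alpha/2}$; hence $|(f*G)(t,y)-(f*G)(s,y)|\le C_\alpha\Arrowvert f\Arrowvert_\infty(1+(t^*)^{1-\alpha/2})h^{\alpha/2}$, uniformly in $y$, the case $s=0$ being included.

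For the spatial estimate of the gradient I would take $0\le x<y$; as $\partial_y(f*G)$ is bounded, the case $|y-x|\ge1$ is immediate, so assume $|y-x|<1$. Writing $\partial_y(f*G)(t,y)-\partial_y(f*G)(t,x)=-\int_0^t\!\int_{\R_+}f\,\bigl(\partial_\xi g(y,\xi,t-\tau)-\partial_\xi g(x,\xi,t-\tau)\bigr)\,d\xi\,d\tau$, I cut the time integral at $t-\tau=|y-x|^2$. On the near part $t-\tau<|y-x|^2$ I bound the two kernels separately via \eqref{est.int.dg+}, obtaining $\le C\Arrowvert f\Arrowvert_\infty|y-x|\le C\Arrowvert f\Arrowvert_\infty|y-x|^\beta$. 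On the far part I invoke the pointwise bound $|\partial_\xi\Phi(y\mp\xi,r)-\partial_\xi\Phi(x\mp\xi,r)|\le C\frac{|y-x|}{r}\bigl(\text{a Gaussian in }\xi\text{ of width}\sim r\bigr)$, obtained from the mean value theorem applied to $w\mapsto\frac{w}{2r}\Phi(w,r)$ and absorbing the factor $w^2/r$ into a slightly wider Gaussian; integrating in $\xi$ leaves $\le C\frac{|y-x|}{r}$ (uniformly, since $r>|y-x|^2$), and writing $\frac1r=r^{-(1+\beta)/2}r^{-(1-\beta)/2}\le r^{-(1+\beta)/2}|y-x|^{-(1-\beta)}$ on the far range turns the borderline integral $|y-x|\int_{|y-x|^2}^t\frac{dr}{r}$ into $\le\frac{C}{1-\beta}(t^*)^{(1-\beta)/2}|y-x|^\beta$. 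Collecting the two parts, $|\partial_y(f*G)(t,y)-\partial_y(f*G)(t,x)|\le\frac{C}{1-\beta}\max\{1,(t^*)^{(1-\beta)/2}\}\Arrowvert f\Arrowvert_\infty|y-x|^\beta$.

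The one genuine difficulty, common to both estimates, is the borderline (logarithmic) contribution near the diagonal, $\tau\to t$ respectively $r\to0$: I neutralize it by deliberately discarding a suboptimal time--slice --- of duration $\sim h$ in the temporal estimate and $\sim|y-x|^2$ in the spatial one --- so that on the complement one gains a strictly positive power, at the price of the exponents $\alpha/2$ and $\beta$ rather than the endpoint value $1$. To finish, joint continuity of $f*G$ and $\partial_y(f*G)$ on $[0,t^*]\times\R_\pm$ (including down to $t=0$, where both functions vanish) follows from the integral representations, dominated convergence, and the uniform moduli of continuity just established; combining everything gives $f_\pm*G\in\mathcal{C}^{\alpha/2,1+\beta}_{t,y}([0,t^*]\times\R_\pm)$ with norm controlled by $\Arrowvert f\Arrowvert_\infty$, $t^*$, $\alpha$ and $\beta$ (and the fixed constant $a$) alone.
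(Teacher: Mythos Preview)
Your proof is correct and follows essentially the same strategy as the paper: after the same reductions and the same appeal to \eqref{est.int.G+}, \eqref{est.double.int.dg+} for the $\mathcal{C}^{0,1}_{t,y}$ bound, you handle the temporal H\"older seminorm by excising a time slab of width $\sim h=t-s$ and estimating the remaining difference via an $L^1$-in-$\xi$ bound of order $\sqrt{h/(s-\tau)}$, and you handle the spatial H\"older seminorm of the gradient by cutting the $\tau$-integral at $t-\tau=|y-x|^2$ and using a mean-value estimate on the far piece. The only cosmetic difference is that the paper integrates the far temporal piece directly to a $1/2$-H\"older bound (which then implies all $\alpha/2<1/2$), whereas you interpolate $\sqrt{h/(s-\tau)}\le(h/(s-\tau))^{\alpha/2}$ before integrating; both routes give the same conclusion with constants depending only on $\Arrowvert f\Arrowvert_\infty$, $t^*$, $\alpha$, $\beta$.
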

Finally, we will also use the following result, which can be found in \cite{Ladyzenskaja}.
\begin{prop}\label{hoelder}
	Let $ E\in\left\{\R,\R_\pm\right\} $. Let $ u\in \mathcal{C}^{\alpha,1+\beta}_{t,y}\left([0,t^*]\times E\right) $. Then $ \partial_y u $ is $ \frac{\alpha\beta}{1-\beta} $-H\"older in time.
	\begin{proof}
		We refer to Lemma 3.1, Chapter II of \cite{Ladyzenskaja}. This proposition can be proved in an easier way using that for any $ x,y\in E $ and any $ t,s\in[0,t^*] $ we have the following estimates
		\[u(t,y)=u(t,x)\partial_x u(t,x)(y-x)+\mathcal{O}(|x-y|^{1+\beta}) \text{ for any }t\in[0,t^*]\] and \[|u(t,x)-u(s,x)|\leq C |s-t|^\alpha \text{ for any }x\in E.\]
		Thus,we conclude
		\[|\partial_x u(t,x)-\partial_x u(s,x)|\leq C_1\frac{|t-s|^\alpha}{|x-y|}+C_2 |x-y|^\beta\leq C |t-s|^{\frac{\alpha\beta}{1+\beta}}\] choosing $ |x-y|=|t-s|^{\frac{\alpha}{1+\beta}} $.
	\end{proof}
\end{prop}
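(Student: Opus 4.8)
The plan is to trade the $\beta$-Hölder continuity of $\partial_y u$ in space for Hölder continuity in time, using the $\alpha$-Hölder continuity of $u$ itself, by a standard interpolation-type argument. Fix $x\in E$ and two times $s,t\in[0,t^*]$. For an auxiliary point $y\in E$ I would write the first-order spatial Taylor expansions at both times,
\[
u(t,y)=u(t,x)+\partial_x u(t,x)(y-x)+R_t,\qquad u(s,y)=u(s,x)+\partial_x u(s,x)(y-x)+R_s,
\]
where, since $\partial_x u$ is $\beta$-Hölder in space uniformly in time, the remainders obey $|R_t|,|R_s|\le C|x-y|^{1+\beta}$ with $C=\sup_{\tau\in[0,t^*]}[\partial_x u(\tau,\cdot)]_{y,\beta}$. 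Subtracting the two expansions and solving for the increment of $\partial_x u$ gives
\[
|\partial_x u(t,x)-\partial_x u(s,x)|\,|x-y|\le |u(t,y)-u(s,y)|+|u(t,x)-u(s,x)|+|R_t|+|R_s|\le 2[u]_{t,\alpha}|t-s|^{\alpha}+2C|x-y|^{1+\beta}.
\]

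Dividing by $|x-y|$ produces the bound $2[u]_{t,\alpha}|t-s|^{\alpha}|x-y|^{-1}+2C|x-y|^{\beta}$ for $|\partial_x u(t,x)-\partial_x u(s,x)|$, which I would then optimize over the free parameter $|x-y|$: the two terms balance for $|x-y|\sim|t-s|^{\alpha/(1+\beta)}$, and this choice yields $|\partial_x u(t,x)-\partial_x u(s,x)|\le C'|t-s|^{\alpha\beta/(1+\beta)}$ with $C'$ depending only on $[u]_{t,\alpha}$ and $C$. When $|t-s|$ is not small this choice of $|x-y|$ may exceed $\diam E$, but there the estimate is trivial because $\partial_y u$ is bounded, so after enlarging the constant the Hölder bound holds on all of $[0,t^*]$. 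The resulting temporal Hölder exponent for $\partial_y u$ is $\alpha\beta/(1+\beta)$, matching Lemma 3.1, Chapter II of \cite{Ladyzenskaja}.

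The one point that needs a little care, rather than being purely mechanical, is the choice of the auxiliary point $y$ when $E=\R_\pm$ and $x$ lies near the boundary $\{0\}$: one must take $y$ on the side of $x$ that remains in $E$. This is always possible because $|x-y|$ is taken small (of order $|t-s|^{\alpha/(1+\beta)}$), so for $|t-s|$ small the relevant interval $[x,y]$ (or $[y,x]$) stays inside $E$, and for $|t-s|$ large we fall back on the trivial bound. Alternatively one can bypass the elementary argument entirely and just invoke \cite{Ladyzenskaja}; I would include the short computation above for completeness.
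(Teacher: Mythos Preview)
Your proof is correct and follows essentially the same approach as the paper's: Taylor-expand $u$ in space at the two times, subtract to isolate the increment of $\partial_x u$, and optimize over the auxiliary spacing $|x-y|\sim|t-s|^{\alpha/(1+\beta)}$. Your added remark about choosing $y$ on the appropriate side when $E=\R_\pm$ is a useful clarification (the $\diam E$ comment is vacuous here since all three domains are unbounded), and both proofs land on the exponent $\alpha\beta/(1+\beta)$, so the $1-\beta$ appearing in the displayed statement is evidently a typo.
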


We are now ready to prove the following Theorem.
\begin{theorem}\label{thm.loc.existence.reg.}
Let $ u_0\in C^{0,1}\left( \R\right) $ be bounded with $u_0(0)=0 $, $ u_0(y)>0 $ if $ y<0 $ and $ u_0(y)<0 $ if $ y>0 $. Moreover, let $ \delta\in\left(0,\frac{1}{2}\right) $ and $ u_0\left.\right|_{\R_\pm}\in C^{2,\delta}(\R_\pm) $. Then, for a time $ t^*>0 $ small enough there exists a unique solution 
$$ (u_1,u_2,s)\in \mathcal{C}^{1,2}_{t,y}\left((0,t^*)\times\R_- \right)\times  \mathcal{C}^{1,2}_{t,y}\left((0,t^*)\times\R_+ \right)\times C^1([0,t^*]) $$ to the problem \eqref{syst.6}. Moreover, $$(u_1,u_2,\dot s)\in \mathcal{C}^{\delta/2,1+\delta}_{t,y}\left((0,t^*)\times\R_- \right)\times  \mathcal{C}^{\delta/2,1+\delta}_{t,y}\left((0,t^*)\times\R_+ \right)\times C^{\delta/2}([0,t^*])  $$ for $ \delta<\frac{1}{2} $. Furthermore, for any $ \eps>0 $ it is also true that $ u_1\in \mathcal{C}^{1+\delta/2,2+\delta}_{t,y}([\eps,t^*]\times\R_-)  $ as well as $ u_2\in \mathcal{C}^{1+\delta/2,2+\delta}_{t,y}([\eps,t^*]\times\R_+)$.
\begin{proof}
We have to show that the fixed-point solution $ (u_1,u_2,s) $ found in Theorem \ref{thm.local.well.pos.} has the desired regularity.  We already know that the interface $ s(t)=\int_0^t \dot s(\tau)d\tau $ solves in a classical sense the equation \[	\dot{s}(t)=\frac{1}{L}\left( \partial_{y} u_2(t,0^+)-K\partial_{y} u_1(t,0^-)\right)\] with initial value $ s(0)=0 $. Moreover, $ s\in C^1([0,t^*]) $. 

We will now show that $ (u_1,u_2)\in  \mathcal{C}^{1,2}_{t,y}\left((0,t^*)\times\R_- \right)\times  \mathcal{C}^{1,2}_{t,y}\left((0,t^*)\times\R_+ \right)  $. This will imply that they solve the parabolic equations \eqref{parabolic.eq} strongly in $ (0,t^*]\times \R_\pm $. For the fixed-point solution $( u_1,u_2,\dot s) $ of Theorem \ref{thm.local.well.pos.} we define the sources
\[F_1(t,y)=\dot s(t)\partial_y u_1(t,y)\text{ for } y<0\] and \[ F_2(t,y)=\dot s(t)\partial_y u_2(t,y)+I_\alpha[u_2+T_M] \text{for } y>0.\] We will show that $ F_i\in C^{\delta/2,\delta}([0,t^*],\R_\pm) $, this will imply the regularity of the functions $ u_1,u_2 $.


We first show that $ u_1,\;u_2 $ are $ \frac{\alpha}{2} $-H\"older in time. To this end we define \[ \bar{u}_{0,1}(y)=\begin{cases}
	u_0(y)&y\leq0\\-u_0(-y)&y>0
\end{cases}\quad\text{ and }\quad\bar{u}_{0,2}(y)=\begin{cases}
	u_0(y)&y\geq0\\-u_0(-y)&y<0
\end{cases} .\] 
Since the continuous function $ u_0$ satisfies $u_0\in C^{2,\delta}(\R_\pm) $, then $ \bar{u}_{0,i}\in C^{1,1}(\R_\pm) $. Hence, Lemma \ref{bnd.value.estimates} implies that $  \bar{u}_{0,i}*_y\Phi\in \mathcal{C}^{1/2,1+1}_{t,y}([0,t^*]\times \R_\pm) $ as well as $ \bar{u}_{0,i}*_y\Phi\in \mathcal{C}^{1+1/2,2+1}_{t,y}([\eps,t^*]\times \R_\pm)  $ for any $ \eps>0 $.
We also know that $ F_i\in \mathcal{C}^{0,0}_{t,y}\in\left([0,t^*],\R_\pm\right) $. Therefore Lemma \ref{interior.estimate} implies that $ F_i*G\in \mathcal{C}^{\alpha/2,1+\beta}_{t,y}\left([0,t^*],\R_+\right) $ for any $ \alpha,\beta\in(0,1) $. Thus, \[u_i=u_{0,i}*_y\Phi+F_i*G\in \mathcal{C}^{\alpha/2,1+\beta}_{t,y}\left([0,t^*],\R_\pm\right) \text{ for any }\alpha,\beta\in(0,1).\]
This result has two important consequences. First of all, Proposition \ref{hoelder} implies that the functions $ \partial_yu_i $ are $ \frac{\delta}{2} $-H\"older in time for some $ \delta\in\left(0,\frac{1}{2}\right) $. Indeed, it is not difficult to see that for any $ 0<\delta<\frac{1}{2} $ there exists $ \alpha,\beta \in(0,1)$ such that $ \frac{\alpha\beta}{1+\beta}=\delta $. This implies also that the derivatives $ \partial_yu_i(t,0) $ are $ \frac{\delta}{2} $-H\"older in time and thus by definition $ \dot{s}(t)\in C^{\delta/2}([0,t^*]) $. This yields further a better regularity for $ F_1 $, indeed $ F_1\in \mathcal{C}^{\delta/2,1+\delta}\left([0,t^*],\R_-\right) $. 

A similar result can be shown also for $ F_2 $. We first of all remark that since $ u_2 $ is bounded, then $ (u_2+T_M)\in \mathcal{C}^{\alpha/2,1+\beta}_{t,y}\left([0,t^*],\R_+\right) $ for any $ \alpha,\beta \in (0,1) $. Thus, we have to show that $ I_\alpha[u_2+T_M]\in \mathcal{C}^{\delta/2,\delta}([0,t^*]\times \R_+) $. Clearly, it is $ \delta/2 $-H\"older in time, since $ u_2 $ is so. For the space variable we use that $ E_1\in L^1(\R)\cap L^2(\R) $, hence by interpolation also $ E_1\in L^q(\R) $ for any $ q\in [1,2] $. Let $ b>a>0 $, then for any $ \delta\in[0,1/2] $ we have
\[\int_a^b \frac{\alpha}{2} E_1(\alpha \eta)d\eta \leq \frac{1}{2} |a-b|^{\delta} \Arrowvert E_1\Arrowvert_{L^{\frac{1}{1-\delta}}}.\]
Therefore, for $ v\in C^{0,\delta}(\R_+) $ and $ y>x>0 $ we estimate
\begin{multline}\label{hoeld.of.I}
\left|\int_0^\infty \frac{\alpha}{2}v^4(\eta)\left(E_1(\alpha(y-\eta))-E_1(\alpha(x-\eta))\right)d\eta\right|=\left|\int_{-y}^\infty \frac{\alpha}{2}v^4(\eta+y)E_1(\alpha\eta)d\eta-\int_{-x}^\infty \frac{\alpha}{2}v^4(\eta+x)E_1(\alpha\eta)d\eta\right|\\\leq\left| \int_{-x}^\infty \frac{\alpha}{2}E_1(\alpha\eta)\left(v^4(\eta+y)-v^4(\eta+x)\right)d\eta\right|+\left|\int_{-y}^{-x}\frac{\alpha}{2}E_1(\alpha\eta) v^4(\eta+y)d\eta\right|\\
\leq \Arrowvert v^4\Arrowvert_{\delta}|x-y|^\delta+\frac{1}{2}\Arrowvert v^4\Arrowvert_{C^0}\Arrowvert E_1\Arrowvert_{L^{\frac{1}{1-\delta}}}|x-y|^{\delta}.
\end{multline} 
Hence, we can conclude that $ I_\alpha[u_2+T_M] \in \mathcal{C}^{\delta/2,\delta}([0,t^*]\times\R_+)$ and consequently that $ F_2\in \mathcal{C}^{\delta/2,\delta}([0,t^*]\times\R_+)$.\\

In order to prove finally that $ (u_1,u_2) $ is a classical solution to \eqref{parabolic.eq} we use that any bounded solution $ w_i\in \mathcal{C}^{1,2}_{t,y}([0,t^*]\times \R_\pm) $ of the heat equation 
\begin{equation}\label{heat.eq}
	\begin{cases}
		\partial_t w_i(t,y)-\partial_y^2w_i(t,y)=F_i(t,y)& (0,t^*)\times \R_\pm\\
		w_i(0,y)=u_{0}(y)& y\in\R_\pm\\
		w_i(t,0)=0 & t\in [0,t^*]
	\end{cases}
\end{equation}
 can be written both by 
 \[w_i=\bar{u}_{0,i}*_y\Phi+F_i*G\]
 and by the sum $ w_i=v_i+h_i $ of two functions $ v_i,h_i $ solutions to an inhomogeneous heat equation in the whole space and a homogeneous equation in the half-space, respectively. 
 
 Let us consider the even extensions of $ F_i $
 \[	\bar{F}_1(t,y)=\begin{cases}
 F_1(t,y)&y\leq 0\\F_1(t,-y)&y>0
 \end{cases} \text{ and }	\bar{F}_2(t,y)=\begin{cases}
 F_2(t,y)&y\geq 0\\F_2(t,-y)&y<0.
\end{cases}\]
Then, $ \bar{F}_i\in \mathcal{C}^{\delta/2,\delta}([0,t^*],\R) $. By Proposition \ref{reg.heat} and Lemma \ref{bnd.value.estimates} we see that 
\[ v_i(t,y):=\left(\bar{u}_{0,i}*_y\Phi\right)(t,y)+\left(\bar{F}_i*\Phi\right)(t,y)\in \mathcal{C}_{t,y}^{1+\delta/2,2+\delta}([\eps,t^*]\times \R)\] for any $ \eps>0 $. Thus, $ v_i\in \mathcal{C}_{t,y}^{1,2}((0,t^*]\times \R) $ is a strong solution to 
\begin{equation*}
	\begin{cases}
		\partial_t v_i(t,y)-\partial_y^2v_i(t,y)=\bar{F}_i(t,y)& (0,t^*)\times \R\\
		v_i(0,y)=\bar{u}_{0,i}(y)& y\in\R\\
	\end{cases}
\end{equation*}
Moreover, since $ v_i(t,0)=\left(\bar{F}_i*\Phi\right)(t,0) $ by Proposition \ref{reg.heat} we see that $ v_i(t,0)\in \mathcal{C}_{t,y}^{1,\delta/2}([0,t^*]) $. Thus, by Proposition \ref{reg.heat} we can conclude that
\[ h_i(t,y):=2\left(\partial_y \Phi *_t v(\cdot, 0)\right)(t,y)\in \mathcal{C}_{t,y}^{1+\delta/2,2+\delta}([\eps,t^*]\times \R_\pm)\]
for all $ \eps>0 $. Using the property of the double-layer potential (c.f. \cite{Ladyzenskaja}) we obtain that $ h_i\in \mathcal{C}_{t,y}^{1,2}((0,t^*),\R_\pm) $ is a strong solution to
\begin{equation*}
	\begin{cases}
		\partial_t h_i(t,y)-\partial_y^2h_i(t,y)=0& (0,t^*)\times \R_\pm\\
		h_i(0,y)=0& y\in\R_\pm\\
		h_i(t,0)=-v_i(t,0)&t\in[0,t^*].
	\end{cases}
\end{equation*}
Thus, it is not difficult to see that $ w_i=v_i+h_i $ is a strong solution of \eqref{heat.eq} in the interior $ (0,t^*)\times\R_\pm $. Moreover, $ w_i $ is the unique bounded solution of \eqref{heat.eq} and it has the same integral representation of $ u_i $. Hence $ u_i=w_i $ and using the regularity of $ v_i,h_i $ we conclude
\[u_i\in \mathcal{C}_{t,y}^{1,2}((0,t^*),\R_\pm) \text{ and }u_i\in \mathcal{C}_{t,y}^{1+\delta/2,2+\delta}([\eps,t^*]\times \R_\pm)\text{ for all }\eps>0.\]
\end{proof}
\end{theorem}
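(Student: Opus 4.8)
The plan is to take the distributional fixed-point solution $(u_1,u_2,\dot s)\in\mathcal{A}_{C_1,C_2,C_3}$ produced by Theorem \ref{thm.local.well.pos.} --- whose hypotheses hold here, since $u_0|_{\R_\pm}\in C^{2,\delta}(\R_\pm)$ has in particular bounded first and second derivatives --- and to bootstrap its regularity. Throughout we view the two parabolic equations of \eqref{syst.6} as linear heat equations on the half-lines $\R_\pm$ (with the harmless constant diffusion coefficients $\kappa=K/C$ and $1$) carrying the sources $F_1(t,y)=\dot s(t)\,\partial_y u_1(t,y)$ and $F_2(t,y)=\dot s(t)\,\partial_y u_2(t,y)-I_\alpha[u_2+T_M](t,y)$, and we keep the fixed-point identities \eqref{u1}, \eqref{u2}, which have the form $u_i=\bar u_{0,i}*_y\Phi+F_i*G$, where $G$ is the half-space Green's function and $\bar u_{0,i}$ is the odd reflection of $u_0|_{\R_\pm}$ across $y=0$. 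Because $u_0(0)=0$, this reflection belongs to $C^{1,1}(\R)$ --- its first derivative is the even reflection of a Lipschitz function, and no compatibility of $u_0''(0)$ with $0$ is needed. Since $s\in C^1([0,t^*])$ follows at once from $\dot s\in C^0$, the substance of the statement lies in the Hölder regularity; existence and uniqueness of the triple are then inherited from Theorem \ref{thm.local.well.pos.} (any $\mathcal{C}^{1,2}$ solution being a fortiori the $\mathcal{C}^{0,1}$ distributional solution of that theorem).

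First I would run a low-order bootstrap. As $(u_1,u_2,\dot s)\in\mathcal{A}_{C_1,C_2,C_3}$, the sources $F_i$ are bounded and continuous on $[0,t^*]\times\R_\pm$ (for the radiation term one uses \eqref{IC0}). Lemma \ref{bnd.value.estimates} gives $\bar u_{0,i}*_y\Phi\in\mathcal{C}^{1/2,1+1}_{t,y}([0,t^*]\times\R)$, and Lemma \ref{interior.estimate} gives $F_i*G\in\mathcal{C}^{\alpha/2,1+\beta}_{t,y}([0,t^*]\times\R_\pm)$ for every $\alpha,\beta\in(0,1)$; hence $u_i\in\mathcal{C}^{\alpha/2,1+\beta}_{t,y}([0,t^*]\times\R_\pm)$ for all such $\alpha,\beta$. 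Proposition \ref{hoelder} then turns this into Hölder continuity of $\partial_y u_i$ in time, and by choosing $\alpha,\beta$ sufficiently close to $1$ one obtains $\partial_y u_i\in\mathcal{C}^{\delta/2,\delta}_{t,y}([0,t^*]\times\R_\pm)$ for any prescribed $\delta<\tfrac12$; inserting the traces at $y=0$ into the Stefan condition gives $\dot s\in C^{\delta/2}([0,t^*])$, and then $F_1=\dot s\,\partial_y u_1\in\mathcal{C}^{\delta/2,\delta}_{t,y}([0,t^*]\times\R_-)$ as a product of functions with this regularity. The genuinely new ingredient is the regularity of the nonlocal term $I_\alpha[u_2+T_M]$: Hölder-in-time is inherited from $u_2$, while for the spatial variable I would write $I_\alpha[v]=v^4-\tfrac{\alpha}{2}\int_0^\infty E_1(\alpha|\cdot-\eta|)v^4(\eta)\,d\eta$, observe that the first summand is $\delta$-Hölder, and control the increment of the integral by a translation-and-subtraction argument using $E_1\in L^1(\R)\cap L^2(\R)\subset L^{1/(1-\delta)}(\R)$ --- the inclusion being valid precisely because $\delta<\tfrac12$ --- which bounds it by a constant times $\|v^4\|_{C^{0,\delta}}\,|x-y|^\delta$. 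With $v=u_2+T_M$ this gives $I_\alpha[u_2+T_M]\in\mathcal{C}^{\delta/2,\delta}_{t,y}([0,t^*]\times\R_+)$, hence $F_2\in\mathcal{C}^{\delta/2,\delta}_{t,y}([0,t^*]\times\R_+)$; reinserting these regularities into the solution formulas already yields the claimed $(u_1,u_2,\dot s)\in\mathcal{C}^{\delta/2,1+\delta}_{t,y}([0,t^*]\times\R_-)\times\mathcal{C}^{\delta/2,1+\delta}_{t,y}([0,t^*]\times\R_+)\times C^{\delta/2}([0,t^*])$.

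Next I would recover the interior parabolic regularity by the classical splitting $u_i=v_i+h_i$. Let $v_i$ solve $\partial_t v_i-\partial_y^2v_i=\bar F_i$ on $(0,t^*)\times\R$ with initial datum $\bar u_{0,i}$, where $\bar F_i$ is the even reflection of $F_i$; then $v_i=\bar u_{0,i}*_y\Phi+\bar F_i*\Phi\in\mathcal{C}^{1+\delta/2,2+\delta}_{t,y}([\eps,t^*]\times\R)$ for every $\eps>0$ by Proposition \ref{reg.heat} and Lemma \ref{bnd.value.estimates}, and in particular the trace $v_i(\cdot,0)$ lies in $C^{1,\delta/2}([0,t^*])$. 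Let $h_i$ solve the homogeneous half-line heat equation with zero initial datum and boundary datum $-v_i(\cdot,0)$, realized as the single-layer potential $h_i=2\,\partial_y\Phi*_t v_i(\cdot,0)$; then $h_i\in\mathcal{C}^{1+\delta/2,2+\delta}_{t,y}([\eps,t^*]\times\R_\pm)$ for every $\eps>0$ by estimate \eqref{g} of Proposition \ref{reg.heat}. One then checks that $w_i:=v_i+h_i$ is a bounded strong solution on $(0,t^*)\times\R_\pm$ of the heat equation with source $F_i$, initial datum $u_0$ and zero boundary value, and that it carries the same integral representation as $u_i$; uniqueness of bounded solutions forces $u_i=w_i$, whence $u_i\in\mathcal{C}^{1,2}_{t,y}((0,t^*]\times\R_\pm)$ and $u_i\in\mathcal{C}^{1+\delta/2,2+\delta}_{t,y}([\eps,t^*]\times\R_\pm)$ for every $\eps>0$.

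The main obstacle I anticipate is the feedback intrinsic to the free boundary: $F_i$ carries $\dot s$, whose regularity is exactly what one is trying to extract from the traces $\partial_y u_i(\cdot,0)$, so the bootstrap must be ordered correctly, and the Hölder exponent coming out of Proposition \ref{hoelder} can only be approached, never attained, below $\tfrac12$. The nonlocal radiation operator independently caps the spatial Hölder exponent at $\tfrac12$ through $E_1\in L^{1/(1-\delta)}$, so the restriction $\delta<\tfrac12$ is genuinely forced. A secondary point is that $u_0$ need not meet the second-order compatibility condition at the corner $(t,y)=(0,0)$, which is why the sharp bound $\mathcal{C}^{1+\delta/2,2+\delta}_{t,y}$ is asserted only on $[\eps,t^*]$ and not up to $t=0$.
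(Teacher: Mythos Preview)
Your proposal is correct and follows essentially the same route as the paper: the same bootstrap via Lemmas \ref{bnd.value.estimates}--\ref{interior.estimate} and Proposition \ref{hoelder} to obtain $F_i\in\mathcal{C}^{\delta/2,\delta}$, the same treatment of the nonlocal term through $E_1\in L^{1/(1-\delta)}$, and the same whole-space/half-space splitting $u_i=v_i+h_i$ to conclude. One minor terminological slip: the potential $h_i=2\,\partial_y\Phi*_t v_i(\cdot,0)$ is the \emph{double}-layer (not single-layer) heat potential, as the paper notes; the formula and the invocation of \eqref{g} are correct regardless.
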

\subsection{Maximum principle}
The local solutions $ u_1,u_2 $ were obtained as $ u_i=T_i-T_M $, where $ T_M$ is the melting temperature and $ T_1,T_2 $ are the solutions to \eqref{syst.5}. Since the temperature is a non-negative quantity, we want to show that the solutions $ T_i $ are non-negative as long as they are defined. Moreover, we expect that in the liquid, i.e. $ y<0 $, the temperature satisfies $ T_1>T_M $ while in the solid ($ y>0 $) the temperature satisfies $ T_2<T_M $. In bounded domains the maximum principle yields these results. Since we are working on an unbounded domain we will first consider some suitable problems in bounded domains, where the maximum principle assures the desired properties of the temperature, and then we will show that their solutions converge to $ u_1 $ and $ u_2 $, the solutions to the problem in the whole space.

For $ R\geq 2 $ we fix $ \eta_R\in C^\infty(\R) $ with the property that $ \eta_R\equiv 1 $ if $ |y|\leq R-1 $ and $ \eta_R\equiv 0 $ if $ |y|\geq R $ and $ |\eta|\leq 1 $. Moreover, we choose $ \eta_R $ with $ \Arrowvert \eta'_R\Arrowvert_\infty \leq 2 $ as well as $ \max\left\{\Arrowvert \eta''_R\Arrowvert_\infty,\Arrowvert\eta'''_R\Arrowvert_\infty\right\}\leq C $ for a fixed constant $ C>0 $ independent of $ R $. We will consider $ u_1^R $ and $ u_2^R $ solutions to \begin{align}\label{cut-off}
	\begin{cases}
		\partial_t u_1^R-\dot{s}\partial_y u_1^R=\kappa\partial_y^2 u_1^R &  (0,t_*)\times (-R,0)\\
		u_1^R(t,0)=0\\
		u_1^R(t,-R)=0\\
		u_1^R(0,y)=u_0(y)\eta_R(y)
	\end{cases}
	& \text{and } \begin{cases}
		\partial_t u_2^R-\dot{s}\partial_y u_2^R=\partial_y^2 u_2^R-I_\alpha^R[u_2^R+T_M] &  (0,t_*)\times (0,R)\\
		u_2^R(t,0)=0\\
		u_2^R(t,R)=0\\
		u_2^R(0,y)=u_0(y)\eta_R(y)
	\end{cases}
\end{align}
$ I_\alpha^R[v](t,y)=v^4(t,y)-\int_0^R \frac{\alpha}{2}E_1(\alpha(y-\eta))v^4(t,\eta)d\eta $ and $ \dot{s} $ is the time derivative of the moving interface $ s $, which is together with $ u_1,u_2 $ of Theorem \ref{thm.loc.existence.reg.} the unique solution of problem \eqref{syst.6} for $ t\in(0,t_*) $. We will show the following Lemma.
\begin{lemma}\label{lemma.cut.off}
	Let $ R\geq 2 $. Let $ u_0 $ be as in Theorem \ref{thm.loc.existence.reg.} and let $ \eta_R $ as above. Then there exist a time $ 0<t_*\leq t^* $ small enough and independent of $ R $ such that there exist unique solutions $ u_1^R\in \mathcal{C}^{1,2}_{t,y}((0,t_*)\times[-R,0]) $ and $ u_2^R\in \mathcal{C}^{1,2}_{t,y}((0,t_*)\times[0,R]) $ to \eqref{cut-off} which satisfy the H\"older regularity \[u_1^R\in \mathcal{C}^{\delta/2,1+\delta}_{t,y}([0,t_*]\times[-R,0]) \text{ and }u_1^R\in \mathcal{C}^{\delta/2,1+\delta}_{t,y}([0,t_*]\times[0,R])\] with uniformly bounded H\"older norms.\\
	
	Let also $ u_i $ be the solutions to \eqref{syst.6} of Theorem \ref{thm.loc.existence.reg.}, then there exist two subsequences $ u_i^{R_n} $ which converge to $ u_i $ as $ n\to\infty $ uniformly in every compact set and pointwise everywhere.
	\begin{proof}
		We consider the Green's function for the heat equation on the interval $ [-R,0] $ and $ [0,R] $ given by \[ \tilde{G}_R(y,\xi,at)=\sum_{n\in\mathbb{Z}}\Phi(y-\xi-2nR,at)-\Phi(y+\xi-2nR,at).\] Let $ F_1^R(t,y)=\dot{s}(t)\partial_y u_1^R(y) $ and $ F_2^R(t,y)=\dot{s}(t)\partial_y u_1^R(t,y)-I_\alpha^R[U_2^R+T_M](t,y) $. We obtain the following fixed-point representations for the solutions to \eqref{cut-off} 
		\begin{equation}\label{cut-off.1}
			u_1^R(t,y)=\mathcal{L}_1^R(u_1^R)(t,y)=\int_{-R}^0 u_0(\xi)\eta_R(\xi)\tilde{G}_R(y,\xi,\kappa t)d\xi+ \int_{-R}^0 \int_0^t F_1^R(\tau,\xi)\tilde{G}_R(y,\xi,\kappa(t-\tau))d\tau d\xi,
		\end{equation}
		and
		\begin{equation}\label{cut-off.2}
			u_2^R(t,y)=\mathcal{L}_2^R(u_2^R)(t,y)=\int_0^R u_0(\xi)\eta_R(\xi)\tilde{G}_R(y,\xi,t)d\xi+ \int_0^R\int_0^t F_2^R(\tau,\xi)\tilde{G}_R(y,\xi,(t-\tau))d\tau d\xi.
		\end{equation}
		We will prove the existence of a unique fixed-point for the operators $ \mathcal{L}_1^R: \mathcal{A}_1^R\to  \mathcal{C}_{t,y}^{0,1}\left([0,t_*]\times [0,R]\right) $ and $\mathcal{L}_2^R: \mathcal{A}_2^R\to\mathcal{C}_{t,y}^{0,1}\left([0,t_*]\times [-R,0]\right) $ for all $ R\geq2 $, where
		\[\mathcal{A}_1^R=\left\{u\in \mathcal{C}_{t,y}^{0,1}\left([0,t_*]\times [-R,0]\right):\;\Arrowvert u\Arrowvert_{0,1}\leq C_1\right\}\quad\text{and}\quad \mathcal{A}_2^R=\left\{u\in \mathcal{C}_{t,y}^{0,1}\left([0,t_*]\times [0,R]\right):\;\Arrowvert u\Arrowvert_{0,1}\leq C_2\right\}\]
		for constants $ (1-\theta)C_i>4\Arrowvert u_0\Arrowvert_{1}$ for a fixed $ \theta\in(0,1) $.\\
		
		First of all we see that it is possible to extend in an odd manner $ u_0\eta_R $ to the whole real line. Indeed, we can consider as usual the odd extension of the initial value as
		\[\tilde{u}_{0,1}^R(y)=\begin{cases}
			u_0(y)\eta_R(y)&-R\leq y\leq 0\\ -u_0(-y)\eta_R(-y)&0<y\leq R
		\end{cases} \quad\text{ and }\quad\tilde{u}_{0,2}^R(y)=\begin{cases}
			u_0(y)\eta_R(y)&0\leq y\leq R\\ -u_0(-y)\eta_R(-y)&-R\leq y<0.
		\end{cases}\]
		We define
		\[\bar{u}_{0,1}^R(y)=\bar{u}_{0,1}^R(y+2nR)
		\text{ and }\tilde{u}_{0,2}^R(y)=\bar{u}_{0,2}^R(y+2nR)\text{ for }y\in[-(2n+1)R,(-2n+1)R]\]
		One can easily see then that
		\begin{equation*}\label{initial.value}
			\left(\bar{u}_{0,1}^R*\Phi(\cdot, \kappa\cdot)\right)(t,y)=\int_{-R}^0 u_0(\xi)\eta_R(\xi)\tilde{G}_R(y,\xi,\kappa t)d\xi \text{ and }\left(\bar{u}_{0,2}^R*\Phi(\cdot,\cdot)\right)(t,y)=\int_{0}^R u_0(\xi)\eta_R(\xi)\tilde{G}_R(y,\xi, t)d\xi.
		\end{equation*}
		Similarly, it is not difficult to see that with a change of coordinate
		\begin{equation}\label{green.fct}
			\int_0^{\pm R} \left|\tilde{G}_R(y,\xi,at)\right|d\xi\leq \sum_{n\in\mathbb{Z}}\int_{2nR}^{(2n+1)R}\left|\Phi(y-\xi,at)\right|+\left|\Phi(y+\xi,at)\right|d\xi\leq 2\int_\R|\Phi(\xi,at)|d\xi\leq 2.
		\end{equation}
		The same estimate holds for the integral in $ [-R,0] $. In the same way we have also 
		\begin{equation}\label{green.fct.der}
			\int_0^{\pm R} \left|\partial_y\tilde{G}_R(y,\xi,at)\right|d\xi\leq \sum_{n\in\mathbb{Z}}\int_{2nR}^{(2n+1)R}\left|\partial_\xi\Phi(y-\xi,at)\right|+\left|\partial_\xi\Phi(y+\xi,at)\right|d\xi\leq 2\int_\R|\partial_\xi\Phi(\xi,at)|d\xi\leq \frac{2\sqrt{2}}{\sqrt{at}}.
		\end{equation}
		Thus, for $ u_i,v_i\in\mathcal{A}_i^R $ we have
		\begin{equation}\label{R1}
		\begin{split}
				\Arrowvert \mathcal{L}_1^R(u_1)\Arrowvert_{0,1}\leq& 4\Arrowvert u_0\Arrowvert_1+2\Arrowvert \dot s\Arrowvert_\infty C_1\left(t_*+\frac{\sqrt{2t_*}}{\sqrt{\kappa}}\right) \quad\text{ and }\\	\Arrowvert \mathcal{L}_1^R(u_2)\Arrowvert_{0,1}\leq& 4\Arrowvert u_0\Arrowvert_1+2\left(\Arrowvert \dot s\Arrowvert_\infty C_2+2\left(C_2+T_M\right)^4\right)\left(t_*+\sqrt{2t_*}\right) 
		\end{split}
		\end{equation}
		and 
		\begin{equation}\label{R2}
			\Arrowvert \mathcal{L}_1^R(u_1)-\mathcal{L}_1^R(v_1)\Arrowvert_{0,1}\leq 2\Arrowvert \dot s\Arrowvert_\infty\Arrowvert u_1-v_1\Arrowvert_{0,1}\left(t_*+\frac{\sqrt{2\bar t}}{\sqrt{\kappa}}\right) 
		\end{equation}
		as well as
		\begin{equation}\label{R3}
			\Arrowvert \mathcal{L}_1^R(u_2)-\mathcal{L}_1^R(v_2)\Arrowvert_{0,1}\leq 2\Arrowvert u_2-v_2\Arrowvert_{0,1}\left(\Arrowvert \dot s\Arrowvert_\infty +16\left(C_2^3+T_M^3\right)\right)\left(t_*+\sqrt{2\bar t}\right). 
		\end{equation}
	Since the estimates \eqref{R1}, \eqref{R2} and \eqref{R3} do not depend on $ R $, there exists $ 0<t_*\leq t^* $ independent of $ R $ and small enough such that by the Banach fixed-point Theorem there exist unique fixed-points $ u_1^R\in \mathcal{C}^{0,1}_{t,y}([0,t_*]\times [-R,0]) $ and $ u_2^R\in \mathcal{C}^{0,1}_{t,y}([0,t_*]\times [0,R]) $ of the operators $ \mathcal{L}_1^R $ and $ \mathcal{L}_2^R $ of \eqref{cut-off.1} and \eqref{cut-off.2}, respectively. 
		
		Adapting Lemma \ref{interior.estimate} to the Green's function $ \tilde{G}_R $ one can prove that for any function $ f_\pm\in \mathcal{C}^{0,0}_{t,y}([0,t_*],[0,\pm R]) $ the convolution $ f_\pm *\tilde{G}_R\in \mathcal{C}_{t,y}^{\alpha/2,1+\beta}([0,t_*],[0,\pm R]) $ for any $ \alpha,\beta\in(0,1) $. We omit the proof since it an easy calculation based on the proof of Lemma \ref{interior.estimate} and on a suitable change of variables as in estimates \eqref{green.fct} and \eqref{green.fct.der}.
		This result together with Lemma \ref{bnd.value.estimates} applied to the odd extensions $ \bar{u}_{0,i}^R $ for $ i=1,2 $ implies the desired  H\"older regularity
		\[u_1^R\in \mathcal{C}_{t,y}^{\delta/2,1+\delta}([0,t_*]\times[-R,0]) \text{ and }u_2^R\in \mathcal{C}_{t,y}^{\delta/2,1+\delta}([0,t_*]\times[0,R])\] for any $ \delta\in(0,1) $. Moreover, the H\"older norms are uniformly bounded in $ R $. Indeed, $ \Arrowvert F_1^R\Arrowvert_\infty\leq C_1 \Arrowvert \dot{s}\Arrowvert_\infty  $ and $ \Arrowvert F_2^R\Arrowvert_\infty\leq C_2\Arrowvert \dot{s}\Arrowvert_\infty +2(C_2+T_M)^4 $ and hence Lemma \ref{bnd.value.estimates} and Lemma \ref{interior.estimate} yield
		\[\Arrowvert u_i^R\Arrowvert_{\delta/2,1+\delta}\leq \max\left\{C_i, \Arrowvert u_0\Arrowvert_{1}, C\left(t_*, t_*^{(1-\delta)/2}\right) \frac{\Arrowvert F_i^R\Arrowvert_\infty}{1-\delta}\right\} \quad \text{ for }i=1,2, \]
		where $ C\left(t_*, t_*^{(1-\delta)/2}\right)>0 $ does not depend on $ R $.\\
		
		Before proving that the functions $ u_i^R $ are also classical solutions to \eqref{cut-off}, we prove the convergence result. Let us extend $ u_i^R $ for $ |y|>R $ by a function $ \bar{u}_i^R\in \mathcal{C}_{t,y}^{\delta/2,1+\delta}([0,t_*],\R_\pm) $ with norm $ \Arrowvert \bar{u}_i^R\Arrowvert_{\delta/2,1+\delta}\leq 2\Arrowvert u_i^R\Arrowvert_{\delta/2,1+\delta} $. Then the uniform boundedness of the H\"older norm implies that $ \bar{u}_i^R $ is a compact sequence in $ \mathcal{C}_{t,y}^{\alpha/2,1+\alpha}([0,t_*],[a,b]) $ for every compact set $ [a,b]\in\R_\pm $ and for $ \alpha<\delta $. Therefore, for any sequence $ R_n\to\infty $ a diagonal argument yields the existence of a subsequence $ R_{n_k} $, which we will denote for simplicity by $ R_n $, and of a function $ \bar{u}_i\in \mathcal{C}_{t,y}^{0,1}([0,t_*]\times\R_\pm)\cap C_{\text{loc}}^{\alpha/2,1+\alpha}([0,t_*]\times\R_\pm)  $ such that \[\bar{u}_i^{R_n}\to \bar{u}_i \text{ and }\partial_y\bar{u}_i^{R_n}\to \partial_y\bar{u}_i\text{ as }n\to\infty\]
		uniformly in every compact set and pointwise everywhere. 
		
		We show now $ \bar{u}_i=u_i $. It is not difficult to see that $ u_0\eta_{R_n}\to u_0 $ pointwise for $ y\in\R $. Notice that also $ u_i^{R_n}(y)\to \bar{u_i}(y) $  and $ \partial_yu_i^{R_n}(y)\to \partial_y\bar{u_i}(y) $ for $ R_n>|y| $ and $ n\to\infty $, and that
		\[\left|\int_0^{R_n} \frac{\alpha}{2}E_1(\alpha(\eta-\xi))\left(u_2^{R_n}+T_M\right)^4d\eta-\int_0^\infty \frac{\alpha}{2}E_1(\alpha(\eta-\xi))\left(\bar{u}_2+T_M\right)^4d\eta\right|\to 0 \text{ as }n\to\infty.\]
		Hence, we see that\[F_1^{R_n}(t,y)\to \bar{F}_1(t,y)=\dot s(t)\partial_y \bar{u}_1(t,y)\text{ for } y<0 \text{ and } n\to\infty\] and \[F_2^{R_n}(t,y)\to \bar{F}_2(t,y)=\dot s(t)\partial_y \bar{u}_2(t,y)+I_\alpha[\bar{u}_2+T_M] \text{for } y>0\text{ and } n\to\infty.\]
		Using that \[ \tilde{G}_{R_n}(y,\xi,at)=G(y,\xi,at)+\sum_{|n|\geq1}G(y-2nR,\xi,at)\to G(y,\xi,at) \text{ as }n\to\infty\]
		we can conclude using Lebesgue dominated convergence theorem that the functions $ \bar{u}_i $ solve the integral equations
	\begin{equation}\label{ubar}
		\bar{u}_i(t,y)=\int_{\R_\pm} u_0(\xi)G(y,\xi,a_i t)d\xi+ \int_0^t\int_{\R_\pm}  \bar{F}_i(\tau,\xi)G(y,\xi,a_i(t-\tau))d\tau d\xi,
	\end{equation}
		where $ a_1=\kappa $ and $ a_2=0 $. An easy application of Banach fixed-point theorem shows that \eqref{ubar} has a unique fixed-point for bounded functions in $ \mathcal{C}_{t,y}^{0,1}\left([0,t_*],\R_{\pm}\right) $ with bounded derivative. Thus, since $ u_i $ solves also \eqref{ubar}, we can conclude that $ \bar{u}_i=u_i $.\\

		Finally, we prove that the functions $ u_i^R $ are classical solutions to the heat equations \eqref{cut-off}. Clearly, by Lemma \ref{bnd.value.estimates} we have that $ \bar{u}_{0,i}^R*\Phi\in \mathcal{C}_{t,y}^{1+\delta/2,2+\delta}([\eps,t_*]\times[0,\pm R]) $. We need to prove the differentiability of $ F_i^R*\tilde{G}_R $. First of all we notice that because of $ \partial_t \tilde{G}_R(y,\xi,at)=a\partial_y^2\tilde{G}_R(y,\xi,at) $ we only need to show that there exists the second spatial derivative since also
		\[\lim\limits_{\eps\to 0}\int_0^{\pm R} F_i^R(t-\eps,\xi)\tilde{G}_R(y,\xi,a\eps)d\xi=\lim\limits_{\eps\to 0}\int_0^{\pm R} F_i^R(t-\eps,\xi)\Phi(y-\xi,a\eps)d\xi=F_i^R(t,y)\]
		for any $ y\in(-R,0) $ or $ y\in(0,R) $. Thus, we compute using the change of coordinates as in \eqref{green.fct} and \eqref{green.fct.der}
		\begin{equation*}
			\begin{split}
				\left|\int_0^t \right.&\left.\int_0^{\pm R} F_i^R(t-\tau,\xi)\partial_y^2\tilde{G}_R(y,\xi,a\tau)d\xi d\tau\right|\\\leq&
				\left|\int_0^t \int_0^{\pm R} \left(F_i^R(t-\tau,\xi)-F_i^R(t-\tau,y)\right)\partial_y^2\tilde{G}_R(y,\xi,a\tau)d\xi d\tau\right|+	\left|\int_0^tF_i^R(t-\tau,y) \int_0^{\pm R} \partial_y^2\tilde{G}_R(y,\xi,a\tau)d\xi d\tau\right|\\
				\leq& 2\Arrowvert F_i^R\Arrowvert_\delta\int_0^t\int_\R |y-\xi|^\delta \left|\partial^2_\xi\Phi(\xi,a\tau)\right|+\left|\int_0^tF_i^R(t-\tau,y) \int_0^{\pm R} \partial_\xi^2\tilde{G}_R(y,\xi,a\tau)d\xi d\tau\right|<\infty.
			\end{split}
		\end{equation*}
		We remark that we obtain the first term since $ |y-(\eta+2nR)|\leq \min\{|y-\eta|,|y+\eta|\} $ for any $ \eta\in[-2nR,(-2n+1)R] $. Moreover, the boundedness of the first term is a well-known property of the fundamental solution of the heat equation. The boundedness of the second term is a classical result in parabolic theory which combines the fact that 
		\[\begin{split}
			\left|\int_0^t \right.&\left.F_i^R(t-\tau,y)\int_0^{\pm R} \partial_\xi^2\Phi(y\pm\xi-2nR,a\tau)d\xi d\tau\right|\\&=\left|\int_0^t F_i^R(t-\tau,y)\left( \partial_\xi\Phi(y\pm R-2nR,a\tau)-\partial_\xi\Phi(y-2nR,a\tau)\right)d\xi d\tau\right|\leq C\Arrowvert F_i^R\Arrowvert_\infty
		\end{split}\]
		and the fact that the tail of the series $ \int_0^t \int_0^{\pm R} \left|\partial_\xi^2\tilde{G}_R(y,\xi,a\tau)\right|d\xi d\tau $ converges. Finally, the H\"older continuity of $ F_i^R $ is due to the H\"older regularity of $ u_i^R $ and of $ \dot{s} $ as well as to the convolution with the exponential integral as in \eqref{hoeld.of.I}. Thus, we conclude that $ u_1^R\in \mathcal{C}_{t,y}^{1,2}((0,t_*)\times[-R,0]) $ and $ u_1^R\in \mathcal{C}_{t,y}^{1,2}((0,t_*)\times[0,R]) $ are classical solutions of \eqref{cut-off}.
	\end{proof}
\end{lemma}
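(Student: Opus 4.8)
The plan is to run the same Duhamel/contraction scheme as in Theorem~\ref{thm.local.well.pos.}, but now on the bounded intervals $[-R,0]$, $[0,R]$ and with the interface velocity $\dot s$ \emph{already fixed} — it is the function produced in Theorem~\ref{thm.loc.existence.reg.} — so the only genuinely new issue is to make every constant independent of $R$. First I would introduce the Dirichlet Green's function on the interval by the method of images, $\tilde G_R(y,\xi,at)=\sum_{n\in\mathbb Z}\Phi(y-\xi-2nR,at)-\Phi(y+\xi-2nR,at)$, and record the two crucial $R$-uniform bounds $\int_0^{\pm R}|\tilde G_R(y,\xi,at)|\,d\xi\le 2$ and $\int_0^{\pm R}|\partial_y\tilde G_R(y,\xi,at)|\,d\xi\le 2\sqrt 2/\sqrt{at}$ (equations \eqref{green.fct}, \eqref{green.fct.der}), which follow by grouping the images and comparing with $2\int_{\mathbb R}|\Phi|$ and $2\int_{\mathbb R}|\partial_\xi\Phi|$. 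Treating $F_1^R=\dot s\,\partial_y u_1^R$ and $F_2^R=\dot s\,\partial_y u_2^R-I_\alpha^R[u_2^R+T_M]$ as sources, Duhamel's formula gives the fixed-point identities \eqref{cut-off.1}--\eqref{cut-off.2}. Using the two Green's bounds together with $\int_0^\infty\frac{\alpha}{2}E_1(\alpha\eta)\,d\eta\le 1$ and $|a+b|^4\le 8|a|^4+8|b|^4$ for the self-map, and $a^4-b^4=(a-b)p_3(a,b)$ with $p_3(a,b)\le 2(|a|^3+|b|^3)$ for the contraction — exactly as in the unbounded case — one reaches estimates of the type \eqref{R1}--\eqref{R3} whose constants depend only on $\|\dot s\|_\infty$, $C_1,C_2,T_M,\kappa$ and never on $R$. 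Hence $t_*\le t^*$ can be chosen small depending only on these quantities, $\mathcal L_i^R$ becomes a contraction on $\mathcal A_i^R$, and the Banach fixed-point theorem delivers unique $u_i^R\in\mathcal C^{0,1}_{t,y}$.

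Next I would bootstrap the regularity. Lemma~\ref{interior.estimate} adapts essentially verbatim to the kernel $\tilde G_R$: the only change is to insert the same change of variables $\xi\mapsto\xi+2mR$ used to prove \eqref{green.fct}--\eqref{green.fct.der}, together with the elementary inequality $|y-(\eta+2mR)|\le\min\{|y-\eta|,|y+\eta|\}$, so that $f_\pm*\tilde G_R\in\mathcal C^{\alpha/2,1+\beta}_{t,y}$ for all $\alpha,\beta\in(0,1)$ with a norm controlled only by $\|f_\pm\|_\infty$, $t_*$, $\alpha$, $\beta$. Applied to $F_i^R$ — whose sup-norm is $R$-uniformly bounded, since $\|F_1^R\|_\infty\le C_1\|\dot s\|_\infty$ and $\|F_2^R\|_\infty\le C_2\|\dot s\|_\infty+2(C_2+T_M)^4$ — and combined with Lemma~\ref{bnd.value.estimates} applied to the odd, $2R$-periodic extensions $\bar u_{0,i}^R$ of the data, this yields $u_i^R\in\mathcal C^{\delta/2,1+\delta}_{t,y}$ for any $\delta\in(0,1)$ with $R$-uniformly bounded Hölder norms. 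To promote $u_i^R$ to a classical solution I would use $\partial_t\tilde G_R=a\,\partial_y^2\tilde G_R$ and $\lim_{\eps\to0}\int_0^{\pm R}F_i^R(t-\eps,\xi)\tilde G_R(y,\xi,a\eps)\,d\xi=F_i^R(t,y)$, so that it suffices to produce a continuous second spatial derivative of $F_i^R*\tilde G_R$: writing $F_i^R(t-\tau,\xi)=(F_i^R(t-\tau,\xi)-F_i^R(t-\tau,y))+F_i^R(t-\tau,y)$, the first piece is absorbed by the Hölder seminorm of $F_i^R$ against the (integrable-in-time) singularity of $|\partial_\xi^2\Phi|$, and the second piece is handled by the classical telescoping identity that $\int_0^{\pm R}\partial_\xi^2\Phi(y\pm\xi-2mR,a\tau)\,d\xi$ collapses to a difference of first derivatives $\partial_\xi\Phi$ at the endpoints, plus summability of the tail of the image series. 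The Hölder continuity of $F_i^R$ itself is inherited from that of $u_i^R$ and $\dot s$ and from the bound \eqref{hoeld.of.I} for the non-local term.

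Finally, for the convergence, I would extend each $u_i^R$ beyond $|y|=R$ to $\bar u_i^R\in\mathcal C^{\delta/2,1+\delta}_{t,y}([0,t_*]\times\mathbb R_\pm)$ with norm at most twice that of $u_i^R$; the $R$-uniform Hölder bound together with Arzelà--Ascoli and a diagonal argument over an exhaustion by compacts yields, along a subsequence $R_n\to\infty$, a limit $\bar u_i\in\mathcal C^{0,1}_{t,y}\cap\mathcal C^{\alpha/2,1+\alpha}_{\mathrm{loc}}$ (any $\alpha<\delta$) with $u_i^{R_n}\to\bar u_i$ and $\partial_y u_i^{R_n}\to\partial_y\bar u_i$ locally uniformly and pointwise. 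Passing to the limit in \eqref{cut-off.1}--\eqref{cut-off.2}: $u_0\eta_{R_n}\to u_0$ pointwise, $\tilde G_{R_n}(y,\xi,at)=G(y,\xi,at)+\sum_{|m|\ge1}G(y-2mR_n,\xi,at)\to G(y,\xi,at)$, and the truncated non-local integral converges to the full one by dominated convergence (here $E_1\in L^1$ and $(u_2+T_M)^4$ is bounded), so the limit $\bar u_i$ satisfies the integral equation \eqref{ubar} with $a_1=\kappa$, $a_2=0$; since that equation has a unique bounded fixed point in $\mathcal C^{0,1}_{t,y}$ and $u_i$ of Theorem~\ref{thm.loc.existence.reg.} solves it too, $\bar u_i=u_i$, giving the asserted convergence. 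I expect the main obstacle to be precisely this uniformity in $R$: everything hinges on replacing the half-space bounds \eqref{est.int.G+}, \eqref{est.double.int.dg+} by the $R$-free analogues \eqref{green.fct}, \eqref{green.fct.der} for $\tilde G_R$ and on keeping $\|F_i^R\|_\infty$ bounded, and it is essential to obtain \emph{strong} Hölder compactness rather than mere weak-$*$ compactness, since one must pass to the limit inside the quartic non-linearity $I_\alpha$.
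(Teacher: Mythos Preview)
Your proposal is correct and follows essentially the same approach as the paper: the image-series Green's function $\tilde G_R$, the $R$-uniform kernel bounds \eqref{green.fct}--\eqref{green.fct.der}, the contraction on $\mathcal A_i^R$ with $t_*$ independent of $R$, the bootstrap via the adapted Lemma~\ref{interior.estimate} and the periodic odd extensions, the classical-solution argument by splitting $F_i^R(t-\tau,\xi)$ around $F_i^R(t-\tau,y)$, and the Arzel\`a--Ascoli/diagonal compactness followed by identification of the limit through uniqueness in \eqref{ubar}. Apart from a harmless reordering (you upgrade to classical solutions before the convergence step, whereas the paper does it afterwards) and the inherited typo $a_2=0$ (it should be $a_2=1$), your outline matches the paper's proof.
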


This approximation result will be used in order to show that $ u_1>0 $ as well as $ 0<u_2<-T_M $. Before applying the maximum principle though, we need to show that the maximal interval of existence of the solutions for the original equation \eqref{syst.6} can be approximated by the one of the solutions to \eqref{cut-off}. This is due to the uniform convergence in compact domain of any subsequence of solutions to \eqref{cut-off}. Thus, the norms of the convergent sequence are uniformly bounded in time.
\begin{lemma}\label{lemma.interval.existence}
Let $ [0,t^*] $ be the maximal interval of existence for the solution $ (u_1,u_2,\dot{s}) $ to the problem \eqref{syst.6}. For any $ \eps>0 $ there exists a sequence $ \left(u_1^{R^\eps_n},u_2^{R^\eps_n}\right) $ solving \eqref{cut-off} on $ [0,t^*-\eps] $ with
\[u_i^{R^\eps_n}\to u_i\text{ as }n\to\infty\] uniformly in every compact set and for $ i=1,2 $.
\begin{proof}We argue by contradiction. For any sequence $ \left\{R_n\right\}_{n\in\mathbb{N}}$ with $R_n\to \infty$ as $ n\to\infty $ we define the maximal time of existence of the sequence $ u_i^{R_n} $ by
\[t_*(R_n):=\sup\left\{t_*>0: u_i^{R_n} \text{ exists in }[0,t_*]\text{, for all }n\text{ and }i=1,2\right\}.\] By the convergence result of Lemma \ref{lemma.cut.off} we know that $ t_*(R_n)\leq t^* $. Hence we consider
\[\bar t:=\sup\{t_*(R_n): R_n \text{ is an increasing diverging sequence}\}\leq t^*.\] If $ \bar{t}=t^* $ then Lemma \ref{lemma.interval.existence} is proved. Indeed, by the definition of $ \bar{t} $ for any $ \eps>0 $ there exists an increasing diverging sequence $ R_n^\eps $ such that $ t_*(R_n^\eps)>\bar{t}-\eps=t^*-\eps $. Thus, taking a suitable subsequence we conclude the lemma.

Let us assume that $ \bar{t}<t^* $. Let also $ \delta>0 $ and let $ R^{\delta}_n $ be an increasing sequence such that $ t_*\left(R^{\delta}_n\right)>\bar t-\delta $. By Lemma \ref{lemma.cut.off} there exists a subsequence $ R^\delta_{n_k} $ such that $ u_i^{R^{\delta}_{n_k}}\to u_i $ uniformly in every compact set. We hence see by the convergence result that the norms of $ u_i^{R^{\delta}_{n_k}} $ are uniformly bounded on $ [0,t_*\left(R^{\delta}_n\right)] $ by the (bounded) norm of $ u_i $ on the larger interval time $ [0,\bar t] $. Thus, the solutions $ u_i^{R^{\delta}_{n_k}} $ can be extended for larger times so that
\[t_*\left(R^{\delta}_{n_k}\right)\geq t_*\left(R^{\delta}_n\right)+\theta(\bar t)>\bar t-\delta+\theta(\bar t),\] where $ \theta(\bar t)>0 $ depends on the norm of $ u_i $ on $ [0,\bar t] $ and not on $ \delta $. Since $ \delta $ is arbitrary we obtain the contradiction $ \bar t\geq \bar{t}+\theta(\bar{t})$. This concludes the proof of this lemma.\end{proof}
\end{lemma}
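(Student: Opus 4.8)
The plan is a uniform continuation argument resting on two features of Lemma \ref{lemma.cut.off}: the local existence time for the truncated problems \eqref{cut-off} does not depend on $ R $, and along a subsequence the truncated solutions converge to $ (u_1,u_2) $ uniformly on compact sets. For an increasing sequence $ R_n\to\infty $ I set $ t_*(R_n):=\sup\{t>0 : u_i^{R_n}\ \text{solve}\ \eqref{cut-off}\ \text{on}\ [0,t]\ \text{for all}\ n,i\} $ and $ \bar t:=\sup\{t_*(R_n):\ R_n\ \text{increasing},\ R_n\to\infty\} $. Since the coefficient $ \dot s $ in \eqref{cut-off} is defined only on the maximal interval $ [0,t^*] $ of the original problem \eqref{syst.6}, every $ u_i^{R_n} $ can exist at most up to $ t^* $, so $ t_*(R_n)\le t^* $ for all such sequences and $ \bar t\le t^* $. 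If $ \bar t=t^* $ we are done: for $ \eps>0 $ pick a sequence with $ t_*(R^\eps_n)>t^*-\eps $ and extract the convergent subsequence provided by Lemma \ref{lemma.cut.off}.

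It remains to exclude $ \bar t<t^* $. Suppose it holds. Then the solution $ (u_1,u_2,\dot s) $ is defined on $ [0,\bar t] $, so $ u_1,u_2,\partial_y u_1,\partial_y u_2 $ are bounded there, say by $ M $. Fix $ \delta\in(0,t^*-\bar t) $, choose an increasing sequence $ R^\delta_n\to\infty $ with $ t_*(R^\delta_n)>\bar t-\delta $, and by Lemma \ref{lemma.cut.off} pass to a subsequence — still denoted $ R^\delta_n $ — along which $ u_i^{R^\delta_n}\to u_i $ and $ \partial_y u_i^{R^\delta_n}\to\partial_y u_i $ uniformly on compact sets. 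Combining this convergence with the integral representations \eqref{cut-off.1}--\eqref{cut-off.2} and the $ R $-independent kernel bounds \eqref{green.fct}, \eqref{green.fct.der} for $ \tilde G_R $, one checks that the $ \mathcal{C}^{0,1}_{t,y} $ norms of $ u_i^{R^\delta_n} $ on $ [0,\bar t-\delta] $, over the full domains $ [-R^\delta_n,0] $ and $ [0,R^\delta_n] $, are bounded uniformly in $ n $ by a fixed multiple of $ M $.

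Next I would restart the fixed-point scheme of Lemma \ref{lemma.cut.off} at time $ \bar t-\delta $, solving \eqref{cut-off} on $ [\bar t-\delta,\bar t-\delta+\theta] $ with initial datum $ u_i^{R^\delta_n}(\bar t-\delta,\cdot) $ and the odd, $ 2R $-periodic extensions used there. The self-map and contraction inequalities \eqref{R1}, \eqref{R2}, \eqref{R3} are $ R $-independent and depend on the initial datum only through its $ \mathcal{C}^{0,1} $ norm, the quartic radiation term being Lipschitz on the relevant ball exactly as in \eqref{R3}; hence there is a step $ \theta=\theta(M)>0 $, independent of $ R^\delta_n $, $ n $ and $ \delta $ — shrunk so that $ \theta\le t^*-\bar t $, so that $ \dot s $ is still available — on which the restarted problem has a unique solution. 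Gluing it to $ u_i^{R^\delta_n} $ at time $ \bar t-\delta $ yields a solution of \eqref{cut-off} on $ [0,\bar t-\delta+\theta] $ for all large $ n $; discarding the remaining finitely many indices, the sequence still diverges and $ t_*(R^\delta_n)\ge\bar t-\delta+\theta $. Taking $ \delta<\theta $ gives $ \bar t\ge\bar t-\delta+\theta>\bar t $, a contradiction. Hence $ \bar t=t^* $, which proves the lemma.

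The main obstacle is the uniformity of this restart: one must re-run the estimates behind Lemma \ref{lemma.cut.off} with a general bounded $ C^{0,1} $ initial datum in place of $ u_0\eta_R $ and check that the resulting existence time $ \theta $ depends only on $ M $ and on the fixed parameters $ \Arrowvert\dot s\Arrowvert_\infty $, $ T_M $, $ K $, $ \kappa $, $ L $, never on $ R $, $ n $ or $ \delta $. Intertwined with this is controlling the restart data $ u_i^{R^\delta_n}(\bar t-\delta,\cdot) $ uniformly in $ n $ and $ R $ even though the spatial domains grow with $ R $ — which is precisely what the uniform-on-compacts convergence to the bounded solution $ u_i $, together with the $ R $-independent Green's-function bounds \eqref{green.fct}, \eqref{green.fct.der}, provides.
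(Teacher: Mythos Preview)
Your argument is correct and follows essentially the same route as the paper: the same definitions of $t_*(R_n)$ and $\bar t$, the same contradiction via a uniform restart at time $\bar t-\delta$ with a step $\theta$ depending only on the $\mathcal{C}^{0,1}$ size of the data. You are in fact more careful than the paper on two points---explaining why $t_*(R_n)\le t^*$ (because $\dot s$ is only defined on $[0,t^*]$) and why the $\mathcal{C}^{0,1}$ bounds on the growing domains $[0,\pm R^\delta_n]$ are uniform despite convergence being only on compacts (via the $R$-independent Green's-function estimates \eqref{green.fct}, \eqref{green.fct.der})---and you correctly impose $\theta\le t^*-\bar t$ so that $\dot s$ remains available during the restart.
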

We can now prove with the maximum principle the following proposition.
\begin{prop}[Properties of the solution]\label{prop.sol.}
	Let $ u_0\in C^0_b(\R) $ be as in the assumptions of Theorem \ref{thm.loc.existence.reg.}. Let $ (u_1,u_2,\dot{s}) $ be the local solution to \eqref{syst.6} of Theorem \ref{thm.loc.existence.reg.} for $t\in [0,t^*]$, which is the maximal interval of existence. Then $ u_1\geq 0 $ and $ -T_M\leq u_2\leq 0 $. Moreover, $ u_1(t,y)>0 $ for all $ y\in(-a,b)\subset(-\infty,0] $ and $ -T_M<u_2(t,y)<0 $ for all $ y\in(-a,b)\subset[0,\infty) $.
	\begin{proof}
		As we have seen in Lemma \ref{lemma.interval.existence} for any $ \eps>0 $ there exists an increasing diverging sequence $ \left\{R^\eps_n\right\}_n $ such that the solutions $ u_i^{R^\eps_n} $ of \eqref{cut-off} exist on the interval $ [0,t^*-\eps] $ and converge to the solutions $ u_i $ uniformly in every compact set.
		
		First of all, for any $ R>0 $ we apply the classical maximum principle to the functions $ u_1^R $ and $ u_2^R $ solving the parabolic problem \eqref{cut-off} on the bounded domains $ [0,t_*(R)]\times[-R,0] $ and $ [0,t_*(R)]\times[0,R] $. Where for the sake of readability we denote $ t_*=t_*(R) $ the maximal time of existence for the solutions $ u_1^R,\;u_2^R $.\\
		
		Let us first consider $ u_1^R $. Then, since $ u_0(y)\eta_R(y)>0 $ for all $y\in(-R,0) $ and $ u_1^R(t,0)=u_1^R(t-,R)=0 $ for all $ t\in(0,t_*) $, the strong maximum principle for the parabolic equation solved by $ u_1^R $ implies that the minimum is attained only at the parabolic boundary, i.e. $ u_1^R(t,y)>0 $ for all $ (t,y)\in(0,t_*]\times(-R,0) $.
		Let now $ \eps>0 $. Using Lemma \ref{lemma.interval.existence} and the pointwise convergence result of Lemma \ref{lemma.cut.off} we obtain $ u_1(t,y)\geq 0 $ for all $ (t,y)\in[0,t^*-\eps]\times\R_- $. Thus, since $ \eps>0 $ is arbitrary, we conclude $ u_1(t,y)\geq 0 $ in $ [0,t^*]\times\R_- $. Let us now consider $ (a,b)\in\R_- $. On one hand by assumption we know that $ u_0(y)>0 $ for all $y\in(a,b) $, on the other hand we have just seen that $ u_1(t,a),u_1(t,b)\geq 0 $. Applying once more the strong maximum principle to the parabolic equation solved by solution $ u_1 $ on $ (0,t^*)\times(a,b) $ we can conclude that also $ u_1(t,y)>0 $ for all $ y\in(a,b)$.\\
		
		We now pass to the analysis of $ u_2^R $.
		Let us assume that $ u_2^R(t,y)\leq -T_M $ for some $ (t,y)\in[0,t_*]\times[0,R] $. Then, since $ u_0(y)\eta_R(y)>-T_M $, there exists a $ t_0\in(0,t_*] $, the first time such that a $ y_0\in(0,R) $ exist with $ u_2^R(t_0,y_0)=-T_M $. Hence, $ u_2^R(t,y)>-T_M $ for all $ 0\leq t<t_0 $ and $ y\in[0,R] $. This implies $ \partial_t u_2^R(t_0,y_0)\leq 0 $, $ \partial_y u_2^R(t_0,y_0)=0 $ and also $ \partial_y^2 u_2^R(t_0,y_0)\geq 0 $. Moreover, on the one hand $ \left(u_2^R(t_0,y_0)+T_M\right)^4=0 $ and on the other hand there exists an interval $ (0,y_1)\in(0,R) $ such that $ u_2^R(t_0,y)>-T_M $. Hence,
		\begin{multline*}
			0=\partial_t u_2^R(t_0,y_0)-\dot{s}(t_0)\partial_y u_2^R(t_0,y_0)-\partial_y^2 u_2^R(t_0,y_0)+I_\alpha^R[u_2^R+T_M](t_0,y_0)\\\leq -\int_0^R \frac{\alpha}{2}E_1(\alpha(y-\eta))\left(u_2^R(t_0,y)+T_M\right)^4d\eta<0.
		\end{multline*}
		This contradiction implies $ u_2^R(t,y)>-T_M $ for all $ [0,t_*]\times[0,R] $.
		
		Let now $ (t_0,y_0)\in[0,t_*]\times[0,R] $ be such that $ \max\limits_{[0,t_*]\times[0,R]}u_2^R(t,y)=u_2^R(t_0,y_0) $. Assume first that $ (t_0,y_0)\in(0,t_*]\times(0,R) $. Then, $ \partial_t u_2^R(t_0,y_0)\geq0 $, $ \partial_y u_2^R (t_0,y_0)=0 $ and $ \partial_2 u_2^R (t_0,y_0)\leq0 $. Moreover, since $ u_2^R>-T_M $ we have also $ \left(u_2^R(t_0,y_0)+T_M\right)^4\geq \left(u_2^R(t,y)+T_M\right)^4 $. 
		
		This implies
		\begin{multline*}
			0=\partial_t u_2^R(t_0,y_0)-\dot{s}(t_0)\partial_y u_2^R(t_0,y_0)-\partial_y^2 u_2^R(t_0,y_0)+I_\alpha^R[u_2^R+T_M](t_0,y_0)\\\geq\left(\int_{-\infty}^0 \frac{\alpha}{2}E_1(\alpha(y-\eta))d\eta+\int_{R}^\infty \frac{\alpha}{2}E_1(\alpha(y-\eta))d\eta\right)\left(u_2^R(t_0,y_0)+T_M\right)^4\\ +\int_{R}^0 \frac{\alpha}{2}E_1(\alpha(y-\eta))\left[\left(u_2^R(t_0,y_0)+T_M\right)^4-\left(u_2^R(t_0,y)+T_M\right)^4\right]d\eta\\\geq \left(\int_{-\infty}^0 \frac{\alpha}{2}E_1(\alpha(y-\eta))d\eta+\int_{R}^\infty \frac{\alpha}{2}E_1(\alpha(y-\eta))d\eta\right)\left(u_2^R(t_0,y_0)+T_M\right)^4>0.
		\end{multline*}
		This contradiction yields that the maximum is attained at the parabolic boundary, i.e.
		\[u_2^R(t,y)<\max\{u_0(y)\eta_R(y),0\}=0\quad\text{for all}\quad (t,y)\in(0,t_*]\times(0,R).\]
		by the initial condition $ u_2^R(0,y)=\eta_R(y)u_0(y)<0 $ for all $ y\in(0,R) $ we conclude that $ u_2^R(t,y)<0 $ for all $(t,y)\in(0,t_*]\times(0,R)$.
		Let $ \eps>0 $. Using once more Lemma \ref{lemma.interval.existence} and the pointwise convergence result of Lemma \ref{lemma.cut.off} we obtain $ -T_M\leq u_2(t,y)\leq 0 $ for all $(t,y)\in[0,t^*-\eps]\times\R_+$. The erbitrary choice of $ \eps>0 $ implies again that $ -T_M\leq u_2(t,y)\leq 0 $ for all $(t,y)\in[0,t^*]\times\R_+$. To prove that also $ -T_M<u_2(t,y)<0 $ for all $ y\in(a,b)\subset[0,\infty) $ we apply the maximum principle to the function $ u_2 $ again. Let $ R>b $ and assume that $ \min\limits_{[0,t^*]\times[0,R]}u_2=-T_M $. Since $ u_0(y)>-T_M $ for all $ y\in[0,R] $ and $ u_2(t,0)=0$ and $u_2(t,R)\geq -T_M $, there exists $ t_0>0 $ the first time for which there exists some $ y_0\in(0,R] $ such that $ u_2(t_0,y_0)=-T_M $. Hence,
		\begin{multline*}
			0=\partial_t u_2(t_0,y_0)-\dot{s}(t_0)\partial_y u_2(t_0,y_0)-\partial_y^2 u_2(t_0,y_0)+I_\alpha[u_2+T_M](t_0,y_0)\\\leq -\int_0^\infty\frac{\alpha}{2}E_1(\alpha(y-\eta))\left(u_2(t_0,y)+T_M\right)^4d\eta<0,
		\end{multline*}where we used that $ u_2\geq -T_M $ and that the strict inequality holds in a set of positive measure. This contradiction implies that $ u_2(t,y)>-T_M $ for all $ y\in(a,b) $. Let us assume now that $ \max\limits_{[0,t^*]\times[a,b]}u_2=u_2(t_0,y_0)=0 $ for a $ (t_0,y_0)\in(0,t^*]\times(a,b) $. Since $ u_2(t,a),u_2(t,b)\leq 0 $ for $ t> 0 $ we see that
		\begin{multline*}
			0=\partial_t u_2(t_0,y_0)-\dot{s}(t_0)\partial_y u_2(t_0,y_0)-\partial_y^2 u_2(t_0,y_0)+I_\alpha[u_2+T_M](t_0,y_0)\\\geq T_M^4-\int_0^\infty\frac{\alpha}{2}E_1(\alpha(y-\eta))\left(u_2(t_0,y)+T_M\right)^4d\eta\geq T_M^4\left(1-\int_{-b}^\infty\frac{\alpha}{2}E_1(\alpha\eta)d\eta\right)>0,
		\end{multline*} where we also used $ y_0<b $. Thus, since also $ u_0(y)<0 $ for $ y\in(a,b) $ we conclude that $ u_2(t,y)<0 $ for all $ y\in(a,b) $. \\
		
	\end{proof}
\end{prop}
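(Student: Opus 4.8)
The plan is to transport the maximum principle from the bounded cut-off problems \eqref{cut-off} to the original unbounded problem \eqref{syst.6} by means of the approximation provided by Lemma \ref{lemma.cut.off} and Lemma \ref{lemma.interval.existence}. First I would fix $\eps>0$ and use Lemma \ref{lemma.interval.existence} to obtain an increasing sequence $R_n^\eps\to\infty$ so that the solutions $u_i^{R_n^\eps}$ of \eqref{cut-off} exist on $[0,t^*-\eps]$ and converge to $u_i$, together with their spatial derivatives, uniformly on compact sets and pointwise. Since every sign condition below will be established for all $u_i^{R_n^\eps}$ and is preserved under pointwise limits, it passes to $u_i$ on $[0,t^*-\eps]$ for every $\eps>0$, hence on all of $[0,t^*]$ by continuity.

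For $u_1$: the parabolic operator for $u_1^R$ in \eqref{cut-off} carries only the drift term $-\dot s\,\partial_y$ and no zeroth-order term, the initial datum $u_0\eta_R$ is strictly positive on $(-R,0)$, and both lateral boundary traces vanish; hence the strong maximum principle gives $u_1^R>0$ in the open cylinder. Passing to the limit yields $u_1\geq 0$ on $[0,t^*]\times\R_-$, and then, for any interval $(a,b)\subset(-\infty,0]$, a second application of the strong maximum principle on $(0,t^*)\times(a,b)$ --- using $u_0>0$ on $(a,b)$ and the traces $u_1(\cdot,a),u_1(\cdot,b)\geq 0$ already known --- upgrades this to $u_1>0$ on $(a,b)$.

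The delicate point is $u_2$, because of the non-local term $I_\alpha^R[u_2^R+T_M]$, which is not a standard zeroth-order term. The device that rescues the argument is the mass identity $\int_{\R}\tfrac{\alpha}{2}E_1(\alpha|\cdot|)=1$ together with the fact that this kernel has mass strictly less than $1$ on any half-line or bounded interval. To show $u_2^R>-T_M$, I would examine the first time $t_0$ at which $u_2^R$ attains the value $-T_M$, at some $y_0\in(0,R)$: there $\partial_t u_2^R\leq 0$, $\partial_y u_2^R=0$ (so the drift drops out), $\partial_y^2 u_2^R\geq 0$, while $(u_2^R(t_0,y_0)+T_M)^4=0$ and $(u_2^R(t_0,\cdot)+T_M)^4>0$ on a set of positive measure; plugging into \eqref{cut-off} forces $0\leq I_\alpha^R[u_2^R+T_M](t_0,y_0)=-\int_0^R\tfrac{\alpha}{2}E_1(\alpha(y_0-\eta))(u_2^R(t_0,\eta)+T_M)^4\,d\eta<0$, a contradiction. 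For $u_2^R<0$ I would instead take an interior maximum point $(t_0,y_0)$; there $\partial_t u_2^R\geq0$, $\partial_y u_2^R=0$, $\partial_y^2 u_2^R\leq 0$, and $(u_2^R(t_0,y_0)+T_M)^4$ dominates $(u_2^R(t_0,\eta)+T_M)^4$, so \eqref{cut-off} yields $0\geq I_\alpha^R[u_2^R+T_M](t_0,y_0)\geq\bigl(1-\int_0^R\tfrac{\alpha}{2}E_1(\alpha(y_0-\eta))\,d\eta\bigr)(u_2^R(t_0,y_0)+T_M)^4>0$, using $u_2^R>-T_M$ (proved in the previous step) and the strict mass deficit of $E_1$ on $(0,R)$; hence the maximum lies on the parabolic boundary, and since $\eta_R u_0<0$ on $(0,R)$ one gets $u_2^R<0$ in the interior. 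Letting $R\to\infty$ delivers $-T_M\le u_2\le 0$ on $[0,t^*]\times\R_+$, and repeating the same two contradiction arguments directly for $u_2$ on an interval $(a,b)\subset[0,\infty)$ --- now exploiting $\int_{-b}^\infty\tfrac{\alpha}{2}E_1(\alpha\eta)\,d\eta<1$ --- sharpens these to $-T_M<u_2<0$ on $(a,b)$.

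The main obstacle is precisely handling the non-local radiative operator inside the maximum principle: one cannot simply invoke a textbook statement, and the resolution is the quantitative observation that the emission--absorption kernel loses mass on any proper subdomain, which is what produces the strict signs. The drift term $-\dot s\,\partial_y u_i$ is harmless since it vanishes at every interior critical point, and the $\mathcal{C}^{1,2}_{t,y}$-regularity needed to evaluate the equations pointwise is already supplied by Lemma \ref{lemma.cut.off}.
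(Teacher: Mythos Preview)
Your proposal is correct and follows essentially the same approach as the paper: approximate by the cut-off problems, use the strong maximum principle for $u_1^R$, and for $u_2^R$ combine a first-touching-time argument for the lower bound $-T_M$ with the mass-deficit inequality $\int_0^R\tfrac{\alpha}{2}E_1(\alpha(y_0-\eta))\,d\eta<1$ for the upper bound $0$, then pass to the limit and repeat the same contradiction arguments directly on $u_2$ over $(a,b)$. The paper's proof matches yours step for step, including the identification of the kernel mass deficit as the mechanism that produces the strict signs.
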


\section{Global well-posedness}\label{subs.global}
In this section we will show that for a class of initial data, the system \eqref{syst.6} has a unique global solution in time. Our aim is to construct a function $ w\in C^{0,1}(\R) $ twice continuously differentiable in $ \R_\pm $ such that $ w(0)=0 $ and such that $ u_1\leq w $ on $ \R_- $ and $ u_2\geq w $ on $ \R_+ $. This would imply global well-posedness. Instead of considering the shifted temperature $ u_i $ we will now study the original temperature $ T_i=u_i+T_M $.

\begin{theorem}[Global Well-posedness]\label{thm.global}
Let $ T_0\in C^{0,1}(\R) $ with $ T_0\left.\right|_{\R_\pm}\in C^{2,\delta}(\R_\pm) $. Let also $ T_0(0)=T_M $, $ T_M<T_0(y)$ for $ y<0 $ and $ 0<T_0(y)<T_M $ for $ y>0 $. Then, if in addition $ \sup\limits_{\R_-}T_0< T_M+\frac{\kappa L^2}{KT_M} $ and $ \inf\limits_{\R_+}T_0>0 $, there is a unique global solution $ (T_1,T_2,s) $ of \eqref{syst.5}.\end{theorem}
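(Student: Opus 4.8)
The plan is to realise the assertion through a \emph{stationary} barrier: a function $w\in C^{0,1}(\R)$, of class $C^2$ on $\R_\pm$, with $w(0)=0$, which is a supersolution of the liquid equation on $\R_-$ and a subsolution of the solid (radiative) equation on $\R_+$, and which sandwiches the initial datum, $w\ge u_0$ on $\R_-$ and $w\le u_0$ on $\R_+$ with $u_0=T_0-T_M$. Granted such a $w$ (for the relevant range of $\dot s$), I would combine it with Proposition~\ref{prop.sol.} and a comparison argument, made rigorous through the truncated problems of Lemma~\ref{lemma.cut.off}–Lemma~\ref{lemma.interval.existence}, to get that on any subinterval of the maximal existence time $[0,t^*)$ on which $\dot s$ stays in a fixed compact set one has $0\le u_1\le w$ on $\R_-$ and $w\le u_2\le 0$ on $\R_+$. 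Comparing first–order traces at $y=0$ (using $\partial_yu_1(t,0^-)\le 0$ and $\partial_yu_2(t,0^+)\le 0$ from Proposition~\ref{prop.sol.}, together with $\partial_yu_1(t,0^-)\ge\partial_yw(0^-)$ and $\partial_yu_2(t,0^+)\ge\partial_yw(0^+)$ from the sandwich) yields a two–sided bound $\tfrac1L\partial_yw(0^+)\le\dot s(t)\le-\tfrac KL\partial_yw(0^-)$. A continuity/maximality argument in $t$ (running the barrier for a slightly enlarged $\dot s$–window) upgrades this to an a priori bound $\|\dot s\|_{L^\infty([0,t^*))}<\infty$; then $0\le u_1\le\|w\|_\infty$, $-T_M\le u_2\le0$, $\|\dot s\|_\infty<\infty$ feed the integral representations \eqref{u1}–\eqref{s} and the estimates of Section~\ref{Sec.loc.Well.Pos} (Lemmas~\ref{bnd.value.estimates}–\ref{interior.estimate} and the bounds \eqref{est.int.G+}–\eqref{est.double.int.dg-}) to keep $\|u_i\|_{0,1}$, and hence the Hölder norms, bounded up to $t^*$; restarting Theorem~\ref{thm.loc.existence.reg.} from a time just below $t^*$ with data in $C^{2,\delta}$ then contradicts maximality of $t^*$. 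Uniqueness is inherited from local uniqueness by the usual open-and-closed argument.

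For the barrier itself I would try $w(y)=A(1-e^{by})$ on $\R_-$ and $w(y)=-D(1-e^{-cy})$ on $\R_+$, with $A,b,c,D>0$, so that $w(0)=0$, $w$ is bounded, $\partial_yw(0^-)=-Ab$, $\partial_yw(0^+)=-Dc$. Since $\partial_tw\equiv0$, the supersolution inequality on $\R_-$ reduces to $Abe^{by}(\dot s+\kappa b)\ge0$, i.e. it holds for every $\dot s\ge-\kappa b$, while the subsolution inequality on $\R_+$ reduces to $Dce^{-cy}(\dot s-c)+I_\alpha[w+T_M]\le0$. One also needs $A\ge\sup_{\R_-}T_0-T_M$ and $w\le u_0$ on $\R_+$, the latter being compatible with $D\le T_M$ precisely because $\inf_{\R_+}T_0>0$ keeps $u_0$ bounded away from $-T_M$ (a slope condition at $0$ being absorbed by taking $c$ large). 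Finally the constants must be self-consistent: $w$ must remain a super/subsolution for every $\dot s$ between the lower trace bound $m=-Dc/L$ and the upper trace bound $M=KAb/L$ it itself generates. Imposing $m\ge-\kappa b$ forces $b\ge Dc/(\kappa L)$, hence $M=KADc/(\kappa L^2)$; evaluating the $\R_+$ inequality at $y=0$, where $I_\alpha[w+T_M](0)=T_M^4-\int_0^\infty\tfrac{\alpha}{2}E_1(\alpha\eta)(w+T_M)^4(\eta)\,d\eta\ge T_M^4/2$, gives a quadratic condition $c^2-Mc-\tfrac{T_M^4}{2D}\ge0$ which, with $M=\tfrac{KAD}{\kappa L^2}c$, is solvable in $c>0$ exactly when $\tfrac{KAD}{\kappa L^2}<1$. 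Taking $D=T_M$ and $A$ just above $\sup_{\R_-}T_0-T_M$, this is precisely the hypothesis $\sup_{\R_-}T_0<T_M+\tfrac{\kappa L^2}{KT_M}$, which is where the threshold comes from.

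The main obstacle is the verification of the subsolution inequality on $\R_+$ away from the origin: the nonlocal term $I_\alpha[w+T_M]$ is not sign–definite, is positive near $y=0$ because the kernel $\tfrac{\alpha}{2}E_1(\alpha|y-\cdot|)$ loses mass to $\{\eta<0\}$, and decays at infinity at a rate that must be balanced against the exponential $Dce^{-cy}(c-\dot s)$; one must bound $I_\alpha[w+T_M]$ from above uniformly in $y>0$ and in the admissible $\dot s$ while keeping $M$ small enough to close the consistency loop. A second, purely technical point is that the truncated operator $I^R_\alpha$ exceeds $I_\alpha$ by a nonnegative term supported near $y=R$, so $w$ need not be a subsolution of the cut-off problem near the outer boundary; I would repair this by taking $w$ to be a \emph{strict} subsolution with a fixed margin (or by adding an $R$–dependent corrector near $y=R$), after which the comparisons $u_1^R\le w$, $u_2^R\ge w$ on the bounded domains and the limit $R\to\infty$ of Lemma~\ref{lemma.cut.off} transfer the inequalities to $u_1,u_2$. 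Everything downstream — the $\dot s$ bound, the uniform Hölder bounds, the restart of Theorem~\ref{thm.loc.existence.reg.} and the uniqueness — is then routine parabolic bookkeeping.
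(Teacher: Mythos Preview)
Your overall strategy --- stationary barrier, comparison to control $\dot s$, then extend --- is exactly the paper's, and you correctly extract the threshold $T_M+\tfrac{\kappa L^2}{KT_M}$ from the self-consistency of the $\dot s$-window. The gap you yourself flag, the subsolution inequality on $\R_+$ for $y$ away from $0$, is real but is closable with your ansatz \emph{only} if you take $D=T_M$: then $w+T_M=T_Me^{-cy}$ and the trivial bound $I_\alpha[v]\le v^4$ gives $I_\alpha[w+T_M](y)\le T_M^4e^{-4cy}$, so the inequality reduces to $T_M^3e^{-3cy}\le c(c-\dot s)$, worst at $y=0$, yielding the same threshold (with $T_M^3$ in place of your $T_M^3/2$). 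For $D<T_M$ your barrier \emph{fails} for large $y$ whenever $c>\alpha$, because the boundary-loss part $(w+T_M)^4(y)\cdot\tfrac12E_2(\alpha y)\sim(T_M-D)^4e^{-\alpha y}$ eventually dominates $Dce^{-cy}(c-\dot s)$; so you must commit to $D=T_M$, which you only mention at the very end.

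The paper takes a different and cleaner route on $\R_+$: it compares through the \emph{local} operator $\mathcal L_2(v)=\partial_tv-\dot s\,\partial_yv-\partial_y^2v+v^4$, using that $\mathcal L_2(T_2)=\int_0^\infty\tfrac\alpha2E_1(\alpha|y-\eta|)T_2^4(\eta)\,d\eta>0$ straight from the equation, so one only needs $\mathcal L_2(w)\le0$, i.e.\ the purely local ODE inequality $w''+C_2w'\ge w^4$ together with $w'<0$. The paper's barrier on $\R_+$, $w(y)=T_Me^{-C_2y}\bigl(1-\tfrac{T_M^3}{12C_2^2}+\tfrac{T_M^3}{12C_2^2}e^{-3C_2y}\bigr)$, satisfies $w''+C_2w'=(T_Me^{-C_2y})^4\ge w^4$ by a one-line computation. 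This trick --- handing the nonlocal integral back to $T_2$ rather than to $w$ --- eliminates your ``main obstacle'' entirely and also dissolves your truncation worry about $I_\alpha^R$ versus $I_\alpha$.

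Two further divergences worth noting. The paper does not route the global comparison through the truncated problems; it argues directly by a first-touching-time $t_0$ on bounded intervals $(a,b)\subset\R_\pm$, the strict sign of $\mathcal L_2(T_2)-\mathcal L_2(w)$ supplying the contradiction. And for the gradient bound it does not use the integral representations (where $\partial_yu_i$ appears on both sides and a singular Gronwall step would be needed) but applies the maximum principle to the \emph{differentiated} equations, with explicit time-dependent barriers $\psi_\pm(t)=\mp\partial_yw(0^-)(1+t)$ on $\R_-$ and, on $\R_+$, barriers $\varphi_\pm$ incorporating a corrector that absorbs the singular source $\tfrac\alpha2T_M^4E_1(\alpha y)$ produced by differentiating $I_\alpha$.
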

Theorem \ref{thm.global} will be proved applying in a suitable way the maximum principle. We will also need the following Lemma, which will be proved at the end of this section.
\begin{lemma}\label{lemma.global}
Let $ T_0 $ be as in the assumption of Theorem \ref{thm.global}. Let $ w\in C^{0,1}(\R) $ be defined by
\begin{equation}\label{W}
w(y)=\begin{cases}
T_M-\frac{\alpha\kappa}{C_1}\left(1-\exp\left(\frac{C_1}{\kappa}y\right)\right)& \text{for}\quad y<0	\\
T_M&\text{ for }y=0\\
 w(y)=T_Me^{-C_2y}\left(1-\frac{T_M^3}{12 C_2^2}+\frac{T_M^3e^{-3C_2y}}{12 C_2^2}\right)&\text{ for }y>0.
\end{cases}
\end{equation}
Then there exist $ C_1,C_2>0 $ satisfying $ C_2>\frac{T_M^{3/2}}{2\sqrt{3}} $, $ C_1>\left(\frac{T_M^5+1}{L^2}\right)^{1/2} $ with $ \Gamma^-(C_1)<C_2<\Gamma^+(C_1) $, where $ \Gamma^\pm(C_1) =\frac{LC_1\pm\sqrt{L^2C_1^2-T_M^5}}{2} $, and $ -\frac{L}{K}C_2<\alpha<0 $ such that $ T_0<w $ on $ \R_- $, $ T_0>w $ on $ \R_+ $ and $ \sup\limits_{\R_\pm}|\partial_y T_0(y)|<|\partial_y w(0^\pm)| $. Moreover, for any $ R>0 $ there exists $ a>0 $ such that $ |T_0(y)-w(y)|>a $ for all $ |y|>R $.
\end{lemma}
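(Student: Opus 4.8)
The plan is to prove the lemma constructively: exhibit $C_1,C_2>0$ and $\alpha<0$ satisfying the stated algebraic relations such that the $w$ from \eqref{W} dominates $T_0$ from the correct side in each phase and has the claimed behaviour at the origin and at infinity. Write $\mu:=\sup_{\R_-}T_0-T_M\in\bigl(0,\tfrac{\kappa L^2}{KT_M}\bigr)$, $m_+:=\inf_{\R_+}T_0>0$, and $M_\pm:=\sup_{\R_\pm}|\partial_yT_0|<\infty$ (finite under the hypotheses of Theorems~\ref{thm.loc.existence.reg.} and \ref{thm.global}). First I would record the elementary facts about $w$: it is continuous at $0$ with $w(0)=T_M$, lies in $C^{0,1}(\R)\cap C^2(\R_\pm)$, is strictly decreasing on $\R_-$ from $w(-\infty)=T_M+\tfrac{(-\alpha)\kappa}{C_1}$ down to $T_M$ with $\partial_yw(0^-)=\alpha$, and — writing the $\R_+$-branch as $w(y)=(T_M-b)e^{-C_2y}+b\,e^{-4C_2y}$ with $b:=\tfrac{T_M^4}{12C_2^2}$ — is strictly decreasing on $\R_+$ from $T_M$ down to $0$ with $\partial_yw(0^+)=-\bigl(C_2T_M+\tfrac{T_M^4}{4C_2}\bigr)$. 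The constraint $C_2>\tfrac{T_M^{3/2}}{2\sqrt3}$ is exactly $b<T_M$, i.e.\ positivity of both coefficients, so $w>0$ on $\R_+$; and (this is the purpose of the correction term $b(e^{-4C_2y}-e^{-C_2y})$) one checks under these relations that $w|_{\R_-}$ is a stationary supersolution of the liquid heat equation and $w|_{\R_+}$ a stationary subsolution of the solid equation, the inequality $\Gamma^-(C_1)<C_2<\Gamma^+(C_1)$ — equivalently $LC_1>C_2+\tfrac{T_M^5}{4C_2}$, which already forces $L^2C_1^2>T_M^5$, hence the demand $C_1>\bigl(\tfrac{T_M^5+1}{L^2}\bigr)^{1/2}$ — being precisely the compatibility of the Stefan condition at the interface $y=0$.

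The two ordering statements I would establish by a ``no interior crossing'' argument rather than by a comparison principle. On $\R_-$: if $w-T_0$ vanished at a negative point, let $y_-$ be the largest such; since $\partial_y(w-T_0)(0^-)=\alpha-\partial_yT_0(0^-)<0$ as soon as $-\alpha>M_-$, one has $w-T_0>0$ on $(y_-,0)$, hence $\partial_y(w-T_0)(y_-)\ge0$, i.e.\ $(-\alpha)e^{C_1y_-/\kappa}\le M_-$; then $w(y_-)=T_M+\tfrac{(-\alpha)\kappa}{C_1}\bigl(1-e^{C_1y_-/\kappa}\bigr)\ge T_M+\tfrac{\kappa}{C_1}\bigl((-\alpha)-M_-\bigr)$, contradicting $w(y_-)=T_0(y_-)\le T_M+\mu$ once
\[
	(-\alpha)\;>\;M_-+\frac{\mu C_1}{\kappa}.
\]
So this inequality gives $w>T_0$ on $\R_-$, and it also forces $w(-\infty)=T_M+\tfrac{(-\alpha)\kappa}{C_1}>T_M+\mu\ge\sup_{\R_-}T_0$. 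Symmetrically on $\R_+$: if $T_0-w$ vanished at a positive point, let $y_+$ be the smallest such; then $\partial_y(T_0-w)(y_+)\le0$, so $|\partial_yw(y_+)|\le M_+$, whereas $|\partial_yw(y_+)|\ge C_2(T_M-b)e^{-C_2y_+}$ and $e^{-C_2y_+}\ge w(y_+)/T_M=T_0(y_+)/T_M\ge m_+/T_M$; with $b<T_M/2$ (automatic for $C_2$ large) this yields $\tfrac{C_2m_+}{2}\le M_+$, impossible once $C_2>\tfrac{2M_+}{m_+}$.

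The remaining two assertions are then easy. For the derivative conditions: $|\partial_yw(0^-)|=-\alpha>M_-$ by the displayed inequality, and $|\partial_yw(0^+)|\ge C_2T_M>M_+$ for $C_2$ large. For the uniform separation: on $\R_-$ one has $w(-\infty)>\sup_{\R_-}T_0\ge T_0$ and on $\R_+$ one has $w(+\infty)=0<m_+\le T_0$, so on the unbounded tails $|w-T_0|$ is bounded below by half the limiting gap (using that $w$ is monotone on each half-line), while on any bounded annulus $\{R\le|y|\le R_*\}$ the continuous, everywhere-positive function $|w-T_0|$ has a positive minimum; combining gives the required $a=a(R)>0$ for each $R$.

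It then remains to choose the constants so all of this holds at once. Pick $C_2$ large enough that $C_2>\tfrac{T_M^{3/2}}{2\sqrt3}$, $C_2>\tfrac{2M_+}{m_+}$, $C_2T_M>M_+$ and $b<T_M/2$ (this disposes of the $\R_+$ side and leaves $C_2$ otherwise free); then choose $C_1$ with $LC_1>C_2+\tfrac{T_M^5}{4C_2}$, $C_1>\bigl(\tfrac{T_M^5+1}{L^2}\bigr)^{1/2}$, and — so that there is room for $\alpha$ between the lower bound $(-\alpha)>M_-+\tfrac{\mu C_1}{\kappa}$ forced above and the imposed ceiling $(-\alpha)<\tfrac{L}{K}C_2$ — also $M_-+\tfrac{\mu C_1}{\kappa}<\tfrac{L}{K}C_2$, i.e.\ $C_1<\tfrac{\kappa L}{K\mu}C_2-\tfrac{\kappa M_-}{\mu}$; finally take any admissible $\alpha\in\bigl(-\tfrac{L}{K}C_2,\,-M_--\tfrac{\mu C_1}{\kappa}\bigr)$. \textbf{The main obstacle} is exactly the nonemptiness of the interval for $C_1$: dividing $\tfrac1L\bigl(C_2+\tfrac{T_M^5}{4C_2}\bigr)<\tfrac{\kappa L}{K\mu}C_2-\tfrac{\kappa M_-}{\mu}$ by $C_2$ and letting $C_2\to\infty$, the left side tends to $\tfrac1L$ and the right side to $\tfrac{\kappa L}{K\mu}$, so the interval is nonempty for all large $C_2$ precisely when $\mu<\tfrac{\kappa L^2}{K}$ — and it is the standing hypothesis $\sup_{\R_-}T_0<T_M+\tfrac{\kappa L^2}{KT_M}$ that secures this (the sharper $T_M^{-1}$ being an artefact of the Stefan-condition bookkeeping behind the window $\Gamma^-(C_1)<C_2<\Gamma^+(C_1)$ and the ceiling $\alpha>-\tfrac{L}{K}C_2$). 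I expect this quantitative balance — how fast $w$ must rise to stay above $T_0$ all the way to $y=-\infty$ versus how large $-\alpha$ may be taken — to be the only genuinely delicate point; everything else is one-variable calculus with exponentials.
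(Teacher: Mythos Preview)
Your approach is essentially correct and somewhat cleaner than the paper's. Both proofs are constructive, but they establish the orderings $T_0<w$ on $\R_-$ and $T_0>w$ on $\R_+$ differently. The paper first introduces auxiliary comparison profiles $f_-^{C_2^0}(y)=T_M+\tfrac{(1-\theta)L^2\kappa}{2T_MK}\bigl(1-e^{2T_MC_2^0y/(L\kappa)}\bigr)$ and $f_+^{C_2^0}(y)=T_Me^{-C_2^0y}$, uses continuity near $y=0$ to secure local ordering on intervals $(-\delta_1,0)$ and $(0,\delta_2)$, and then enlarges $C_2$ so that the bounded plateaus of $T_0$ force the ordering globally; finally it pins down the explicit choices $C_1=\tfrac{2T_M}{L}C_2$ and $\alpha=-\tfrac{(1-\theta)L}{K}C_2$ and checks the constraints. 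Your ``no interior crossing'' argument bypasses both the auxiliary functions and the local step, extracting directly from a hypothetical first touching point a quantitative lower bound on $-\alpha$ (on $\R_-$) or on $C_2$ (on $\R_+$). This is tidy and makes the dependence on $M_\pm$, $m_+$, $\mu$ completely explicit.

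There is, however, one slip at the very end. Your equivalence $\Gamma^-(C_1)<C_2<\Gamma^+(C_1)\Longleftrightarrow LC_1>C_2+\tfrac{T_M^5}{4C_2}$ faithfully reproduces the formula for $\Gamma^\pm$ written in the lemma, and from it your asymptotic argument yields the feasibility condition $\mu<\tfrac{\kappa L^2}{K}$. You then assert that the standing hypothesis $\mu<\tfrac{\kappa L^2}{KT_M}$ ``secures this''; but that implication holds only when $T_M\ge1$, which is nowhere assumed. The resolution is that the denominator in the stated $\Gamma^\pm$ should be $2T_M$ rather than $2$: the window $\Gamma^-<C_2<\Gamma^+$ encodes the Stefan compatibility $\partial_yw(0^+)>-LC_1$, i.e.\ $4T_MC_2^2-4LC_1C_2+T_M^4<0$, whose roots are $\tfrac{LC_1\pm\sqrt{L^2C_1^2-T_M^5}}{2T_M}$. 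With this correction the lower bound on $C_1$ becomes $\tfrac{T_M}{L}C_2+o(C_2)$, and your asymptotic produces exactly $\mu<\tfrac{\kappa L^2}{KT_M}$ --- matching the hypothesis on the nose, so the ``artefact'' remark should be replaced by this observation. The paper sidesteps the issue by verifying $\partial_yw(0^+)>-LC_1$ directly for its explicit choice $C_1=\tfrac{2T_M}{L}C_2$ rather than substituting into $\Gamma^\pm$.
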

We prove now Theorem \ref{thm.global} with the help of Lemma \ref{lemma.global}.
\begin{proof}[Proof of Theorem \ref{thm.global}]
Let $ C_1,C_2>0 $ satisfying $ C_2>\frac{T_M^{3/2}}{2\sqrt{3}} $, $ C_1>\left(\frac{T_M^5+1}{L^2}\right)^{1/2} $ such that $ \Gamma^-(C_1)<C_2<\Gamma^+(C_1) $, where $ \Gamma^\pm(C_1) =\frac{LC_1\pm\sqrt{L^2C_1^2-T_M^5}}{2} $.
Let us also consider a solution $ w $ to
\begin{equation*}\label{w}
	\begin{cases}
		\kappa\partial_y^2 w-C_1\partial_y w=0&y<0\\
		\partial_y^2 w+C_2\partial_y w\geq w^4&y>0\\
		w(0)=T_M\\
		\partial_yw(0^-)>-\frac{L}{K}C_2\\
		\partial_yw(0^+)>-LC_1\\
		w\geq 0
	\end{cases}
\end{equation*}
A simple ODE argument, solving the first equation for $ v=w' $ and integrating, shows that on the negative real line $ w $ is given by
\begin{equation*}\label{w-}
w(y)=T_M-\frac{\alpha\kappa}{C_1}\left(1-\exp\left(\frac{C_1}{\kappa}y\right)\right)\quad \text{for}\quad y<0,
\end{equation*}
for some $ \alpha\in\R $ with $-\frac{L}{K}C_2<\alpha<0 $. Hence, $ \partial_y w(y)=\alpha e^{C_1/\kappa}<0 $

For $ y>0 $ we consider the function \begin{equation*}\label{w+}
	 w(y)=T_Me^{-C_2y}\left(1-\frac{T_M^3}{12 C_2^2}+\frac{T_M^3e^{-3C_2y}}{12 C_2^2}\right) .
\end{equation*} We see that $ w\leq T_Me^{-C_2y} $ as well as $ w\geq 0 $, since $ C_2>\frac{T_M^{3/2}}{2\sqrt{3}} $. Moreover, the function $ w $ satisfies
\begin{multline*}
w''(y)+C_2w'(y)\\= C^2_2T_M e^{-C_2y}\left(1-\frac{T_M^3}{12 C_2^2}\right)+\frac{4}{3}T_M^4e^{-4C_2y}-C^2_2T_M e^{-C_2y}\left(1-\frac{T_M^3}{12 C_2^2}\right)-\frac{1}{3}T_M^4e^{-4C_2y}\\=4T_M^4e^{-4C_2y}=\left(T_Me^{-C_2y}\right)^4\geq w^4.
\end{multline*}
Notice in addition that $ w $ is monotonically decreasing, since 
\begin{equation*}
	w'(y)=-C_2T_M e^{-C_2y}\left(1-\frac{T_M^3}{12 C_2^2}\right)-\frac{1}{3C_2}T_M^4e^{-4C_2y}< 0.
\end{equation*}
Finally, we see that $ \partial_yw(0^+)>-L C_1 $. Indeed, we need to show that 
\begin{equation*}
	w'(0)=-C_2T_M+\frac{T_M^4}{12 C_2}-\frac{1}{3C_2}T_M^4=-\frac{4C_2^2 T_M+T_M^4}{4C_2}>-LC_1,
\end{equation*}
which is equivalent to show that 
\begin{equation*}
	4C_2^2 T_M-4LC_1C_2+T_M^4<0.
\end{equation*}
Since the two roots are given by $ \Gamma^\pm(C_1) =\frac{LC_1\pm\sqrt{L^2C_1^2-T_M^5}}{2}$ and since by assumption $ C_1>\left(\frac{T_M^5+1}{L^2}\right)^{1/2} $ we see that $ \Gamma^\pm(C_1) $ are well defined. Hence, we conclude $ w'(0)>-LC_1 $ using that by assumption
\[\Gamma^-(C_1)<C_2<\Gamma^+(C_1).
\]

Let us now consider $ T_i=T_M+u_i $ the solutions of \eqref{syst.5} considered in Theorem \ref{thm.local.well.pos.} and \ref{thm.loc.existence.reg.} and in Proposition \ref{prop.sol.} on the maximal time interval $ [0,t^*] $. Let us assume that $ t^*<\infty $ and that $ (T_1,T_2,\dot{s}) $ cannot be extended for $ t>t^* $, otherwise it is already the global in time solution. We will show that $ \Arrowvert T_i\Arrowvert_\infty\leq C(w)<\infty $, $ \Arrowvert\partial_yT_i\Arrowvert_\infty<C(w,t^*)<\infty $ and $ \Arrowvert \dot{s}\Arrowvert_\infty< C(w)<\infty $, where the $ \sup $-norms are taken on $ [0,t^*] $. This will imply that the solutions can be extended for $ t>t^* $ as we did in Theorem \ref{thm.local.well.pos.} and \ref{thm.loc.existence.reg.}, and hence $ t^*=\infty $.

Lemma \ref{lemma.global} implies that for the initial value $ T_0 $ as in the assumption of the Theorem there are $ C_1,C_2>0 $ satisfying the prescribed conditions such that $ T_0(y)< w(y) $ for $ y<0 $ and that $ T_0(y)> w(y) $ for $ y>0 $ and such that $ |w'(0^\pm)|>\sup\limits_{\R_\pm}|\partial_y T_0(y|) $. Thus, by the uniform continuity of $ T_i-w\in \mathcal{C}^{1/2,1/2}_{t,y}([0,t_*]\times \R_\pm) $, by the positivity $ \left.\partial_y(T_i(0,y-w))\right|_{y=0}>0 $ as well as by the fact that for any $ R>0 $ there exists $ a>0 $ such that $ |T_0(y)-w(y)|>a $ for all $ |y|>R $, there exists a positive time $ t_0\leq t^* $ such that $ T_1(t,y)<w(y) $ for $ y<0 $ and $ T_2(t,y)>w(y) $ for $ y>0 $ and $ 0\leq t<t_0 $. Let us define 
\[t_0=\inf\{t\in[0,t^*]: \exists y\ne 0 \text{ such that }T_1(t,y)=w(y) \text{ if }y<0 \text{ or }T_2(t,y)=w(y)\text{ if }y>0 \}.\] Let us assume that $ t_0< t^* $. Then, since $ T_1(t,0)=T_M=w(0)=T_2(t,0) $ we have $ 0\geq\partial_y T_1(t,0^-)\geq\partial_y w(0^-) $ as well as $ 0\geq\partial_y T_2(t,0^+)\geq\partial_y w(0^+) $ for $ t\in[0,t_0] $. 
Hence, for $ t\in[0,t_0] $ we obtain that $ -C_1<\dot{s}(t)<C_2 $. Let us also denote by $ \mathcal{L}_1(v)=\partial_t v-\dot{s}(t)\partial_y v-\kappa \partial^2_y v $ for $ y<0 $ and $ \mathcal{L}_2(v)=\partial_t v-\dot{s}(t)\partial_y v- \partial^2_y v+v^4 $ for $ y>0 $. We note that,
\[\mathcal{L}_1(w)=-(\dot{s}(t)+C_1)\partial_y w(y)>0.\]
Hence, $ \mathcal{L}_1(T_1-w)(t,y)<0 $ for all $ (t,y)\in(0,t_0]\times \R_- $ and $ T_1(t,y)-w(y)\leq 0 $ for $ (t,y)\in \{0\}\times \R_-\cup(0,t_0)\times\{a,b\} $, where $ (a,b)\subset \R_- $. An application of the maximum principle to the bounded function $ T_1-w $ on domains $ [0,t_0]\times [a,b] $ for any $ [a,b]\in \R_- $ shows that $ T_1(t,y)< w(t,y) $ for all $ (t,y)\in (0,t_*]\times (a,b) $ for any $ (a,b)\in\R_- $.

Moreover, for $ y>0 $ we see
\[\mathcal{L}_2(w)=-(w''(y)+C_2w'(y)-w^4(y))+(C_2-\dot{s}(t))\partial_y w(y)\leq (C_2-\dot{s}(t))\partial_y w(y)<0 .\]
This implies \[ \mathcal{L}_2(T_2)(t,y)-\mathcal{L}_2(w)(t,y)>\int_0^\infty \frac{\alpha}{2}E_1(\alpha(y-\eta))T^4_2(t,\eta)d\eta>0. \]
Hence, if for $ R>0 $ there exists some $ y_0\in(0,R) $ such that $ T_2(t_0,y_0)-w(y_0)=0 $ we obtain the contradiction
\[0<\mathcal{L}_2(T_2)(t_0,y_0)-\mathcal{L}_2(w)(t_0,y_0)\leq 0.\]
This is because $ \partial_tT_2(t_0,y_0)\leq 0 $  as well as $ T_2(t_0,y_0)-w(y_0)=0 $ would be a minimum in space. 
Thus, $ T_2(t,y)>w(y) $ for all $ (t,y)\in(0,t_0]\times(a,b) $ for every $ (a,b)\in\R_+ $.
Hence, by the definition of $ t_0 $ follows that $ t_0=t^* $ and \[ \Arrowvert T_i\Arrowvert_\infty\leq \max\left\{\Arrowvert w\Arrowvert_\infty,T_M\right\}<\infty.\] Moreover, since $ T_i(t,0)=T_M=w(0) $ we have that $ 0\geq\partial_yT_i(t,0^\pm)\geq \partial_y w(0^\pm) $ and hence by construction \[\Arrowvert \dot{s}\Arrowvert_\infty\leq \max\{C_1,C_2\}<\infty.\] We will now show that also the norms of $ \partial_y T_i $ are bounded. We will use the maximum principle applied to the equation solved by $ \partial_y T_i $. Before considering those equations we shall argue that $ \partial_y T_i $ are indeed twice differentiable. This follows using classic parabolic theory. Let $ \eps>0 $ and $ \gamma>0 $ be arbitrary. We have already shown in Theorem \ref{thm.loc.existence.reg.} that $ T_i\in \mathcal{C}^{1+\delta/2,2+\delta}_{t,y}([\frac{\eps}{2},t^*]\times \R_\pm)\cap \mathcal{C}_{t,y}^{\alpha/2,1+\beta}([0,t^*],\R_\pm) $, for $ \delta\in\left(0,\frac{1}{2}\right] $ and $ \alpha,\beta\in(0,1) $. One can prove that $ T_i\in \mathcal{C}^{1+\delta/2,3+\delta}_{t,y}([\eps,t^*]\times [\pm\gamma,\pm\infty))$ since $ \dot s \partial^2_y T_1 \in \mathcal{C}_{t,y}^{\delta/2,\delta}\left([\frac{\eps}{2},t^*]\times \R_-\right) $ and
\[ \dot s \partial^2_y T_2-4T_2^3\partial_yT_2+\frac{\alpha}{2}T_M^4E_1(\alpha \cdot)+4\int_0^\infty \frac{\alpha}{2}E_1(\alpha(y-\eta))T_2^3(\eta)\partial_\eta T_2(\cdot,\eta)d\eta \in C^{\delta/2,\delta}\left(\left[\frac{\eps}{2},t^*\right]\times \left[\frac{\gamma}{2},\infty\right)\right).\] 
Since the computations are similar to the one in  Proposition \ref{reg.heat}, Lemma \ref{bnd.value.estimates} and Lemma \ref{interior.estimate} we omit the details. Hence, differentiating the equations satisfied by $ T_i $, we obtain that $ \partial_y\partial_t T_i $ exists and it is continuous. Furthermore, differentiating the representation formulas and using classic parabolic theory again, we conclude that the derivatives $ \partial_t\partial_y T_i $ exist and that they are continuous for every $ t,y\in [\eps,t^*]\times[\pm\gamma,\pm\infty) $. Thus, $ \partial_y\partial_t T_i=\partial_y\partial_t T_i $ at the interior of $ (0,t^*]\times\R_\pm $. Differentiating the operators $ \mathcal{L}_1 $ and $ \mathcal{L}_2 $ we see that $ \partial_yT_i $ solve 
\[\mathcal{L}_1(\partial_ yT_1)(t,y)=0 \text{ for } t>0,y<0, {\mathcal{L}}^1_2(\partial_yT_2)(t,y)=\frac{\alpha}{2}T_M^4E_1(\alpha y)>0\text{ and }{\mathcal{L}}^2_2(\partial_yT_2)(t,y)=0 \text{ for } t>0,y>0, \]
where we defined 
\[{\mathcal{L}}^1_2(v)=\partial_t v-\dot{s}(t)\partial_y v- \partial^2_y v+4 T_2^3 v-4\int_0^\infty\frac{\alpha}{2}E_1(\alpha(\cdot-\eta))T_2^3(\eta)v(\cdot,\eta)d\eta \]
as well as \[{\mathcal{L}}^2_2(v)=\partial_t v-\dot{s}(t)\partial_y v- \partial^2_y v+4 T_2^3 v-4\int_0^\infty\frac{\alpha}{2}E_1(\alpha(\cdot-\eta))T_2^3(\eta)v(\cdot,\eta)d\eta-\frac{\alpha}{2}T_M^4E_1(\alpha \cdot). \]
Let us consider for $ t\geq 0 $ the functions $ \psi_\pm(t)=\mp \partial_y w(0^-)(1+t) $. It is easy to see that $ \mathcal{L}_1(\psi_\pm)= \mp \partial_y w(0^-)$ and therefore $ \psi_+ $ is a supersolutions while $ \psi_- $ is a subsolution for $ \mathcal{L}_1 $. Moreover, $ \psi_-<\partial_y T_1(0,y)<\psi_+ $ as well as $ \psi_-<\partial_y T_1(t,0^-)<\psi_+ $ for $ t\in[0,t^*] $ and $ y<0 $. We define
\[t_0=\inf\left\{t_0\in[0,t^*]: \exists y_0<0 \text{ s.t. } \partial_yT_1(t_0,y_0)=\psi_+(t_0)\text{ or }\partial_yT_1(t_0,y_0)=\psi_-(t_0)\right\}.\]
Let us assume that $ t_0<t^* $. Then, by the uniform continuity of $ \partial_yT_1-\psi_\pm\in\mathcal{C}^{1/2,1/2}_{t,y}([0,t^*]\times\R_-) $ and since $ \sup\limits_{y\in\R_-}\partial_yT_1(0,y)>\partial_yw(0^-) $, we know that $ t_0>0 $ as well as $ y_0<0$. Let us assume that $ y_0<0 $ is the smallest such that $ \partial_yT_1(t_0,y_0)=\psi_+(t_0) $. Then, $ \partial_t (\partial_y T_1-\psi_+)(t_0,y_0)\geq 0 $ as well as $ \partial_yT_1(t_0,\cdot) $ has a maximum in $ y_0 $. Thus,
\[0>\mathcal{L}_1(\partial_yT_1-\psi_+)(t_0,y_0)\geq 0.\]
A similar contradiction is obtained applying the maximum principle to $ \partial_y T_1-\psi_- $ assuming that $ \partial_y T_1(t_0,y_0)-\psi_-(t_0)=0 $. Hence, we conclude that $ t_0=t^* $ so that
\[\Arrowvert \partial_yT_1\Arrowvert_\infty\leq |\partial_y w(0^-)|(1+t^*)<\infty.\]
We now consider $ \partial_yT_2 $. Let us define $ \varphi_-= \partial_yw(0^+)e^{4T_M^3t}<0 $. Then, since  $ 0<\int_0^\infty\frac{\alpha}{2}E_1(\alpha(y-\eta))T_2^3(\eta)d\eta\leq T_M^3 $ we see that
\[\mathcal{L}^1_2(\varphi_-)(t,y)\leq 4T_2^3(t,y)\varphi_-(t)<0. \] 
Moreover, $ \varphi_-(0)<\sup\limits_{y\in\R_+}\partial_y T_2(0,y) $ for $ y>0 $ as well as $ \varphi_-(t)<\partial_y T_2(t,0^-)\leq0 $ for $ t\in[0,t^*] $. We remark that $ \partial_yT_2-\varphi_-\in\mathcal{C}^{1/2,1/2}_{t,y}([0,t^*]\times\R_+) $. Defining again via uniform continuity \[t_0=\inf\left\{t_0\in[0,t^*]: \exists y_0>0 \text{ s.t. } \partial_yT_2(t_0,y_0)=\varphi_-(t_0)\right\}>0,\]
assuming $ t_0<t^* $ and applying the maximum principle to $ {\mathcal{L}}^1_2(\partial_y T_2-\varphi_-)(t,y)>0 $ at $ (t_0,y_0) $ we obtain a contradiction. Indeed, $ (\partial_y T_2-\varphi_-)(t_0,y)\geq 0  $ so that $ (\partial_y T_2-\varphi_-)(t_0,y_0) $ is a minimum. Hence,
\[0<{\mathcal{L}}^1_2(\partial_y T_2-\varphi_-)(t_0,y_0)\leq -4 \int_0^\infty\frac{\alpha}{2}E_1(\alpha(y-\eta))T_2^3(\eta)(\partial_y T_2-\varphi_-)(t_0,\eta)d\eta<0. \]
This contradiction implies $ \partial_y T_2(t,y)\geq \partial_yw(0^+)e^{4T_M^3t^*} $ for all $ t\in[0,t_*] $. 
Let us now define \[g(y)=-T_M^4\int_0^yd\xi\int_0^\xi dz e^{-(\xi-z)}\frac{\alpha}{2}E_1(\alpha(z))\eta(z),\]
where $ \eta\in C^\infty([0,\infty)) $ with $ 0\leq \eta\leq 1 $, $ \eta(z)\equiv 1 $ for $ y\in\left[0,\frac{1}{2}\right] $ and $ \eta(z)\equiv 0 $ for $ y\geq 1 $. A simple computation shows $ -T_M^4\leq g(y)\leq 0$ as well as $ -\frac{T_M^4}{2}\leq g'(y)\leq 0 $. We remark that $ g\in C^{0,1/2}(\R_+) $. Moreover, for $ y>0 $ the function $ g $ solves
\[-g''(y)-g'(y)=T_M^4\frac{\alpha}{2}E_1(\alpha(y))\eta(y).\] 
We also consider the function $ h\in C^{0,1/2}([0,t^*]) $ given by
\[\begin{split}
	h(t)=&\left[|\partial_yw(0^+)|+T_M^4+\frac{T_M}{4}\left(1+\frac{C_1+1}{2}+4T_M^3\right)\right]e^{4T_M^3t}-\frac{T_M}{4}\left(1+\frac{C_1+1}{2}+4T_M^3\right)\\\geq& \left[|\partial_yw(0^+)|+T_M^4\right]e^{4T_M^3t}\geq\left[|\partial_yw(0^+)|\right]e^{4T_M^3t}+|g(y)|>0.
\end{split}\] 
Using the estimates $ \frac{\alpha}{2}E(\alpha(y))\leq 1 $ for all $ y\geq \frac{1}{2} $ and $ -\dot{s}(t)\leq C_1 $ we compute for $ \varphi_+(t,y)=h(t)+g(y)\geq |\partial_yw(0^+)|e^{4T_M^3t}>\sup\limits_{y\in\R_+}\left|\partial_y T_2(0,y)\right|>0 $ the following
\[\begin{split}
	\mathcal{L}_2^2(\varphi_+)(t,y)=&\partial_t h(t)+4T_2^3(t,y)(\varphi_+)(t,y)-\int_0^\infty\frac{\alpha}{2}E_1(\alpha(y-\eta))T_2^3(\eta)\varphi_+(t,\eta)d\eta\\&+(-\dot{s}(t)+1)\partial_yg(y)+\frac{\alpha}{2}T_M^4E_1(\alpha y)(\eta(y)-1)\\
	>&\partial_t h(t)-4T_M^3 T_M^4-(C_1+1)\frac{T_M^4}{2}-T_M^4>0.\\
\end{split}\] 
For this estimate we used that $ g\leq 0 $ and $ h>0 $. We also notice that by construction $ \partial_yT_2(0,y)<\varphi_+(t,y) $ as well as $ \partial_y T_2(t,0)\leq0<\varphi_+(t,y) $ for $ t\in[0,t^*] $ and $ y>0 $. Using the uniform continuity of $ \partial_yT_2-\varphi_+\in\mathcal{C}^{1/2,1/2}_{t,y}([0,t^*]\times\R_+) $ we define \[t_0=\inf\left\{t_0\in[0,t^*]: \exists y_0<0 \text{ s.t. } \partial_yT_2(t_0,y_0)=\varphi_+(t_0,y_0)\right\}>0.\]
Assuming $ t_0<t^* $, arguing by continuity and applying once more the maximum principle to $ {\mathcal{L}}^2_2(\partial_y T_2-\varphi_+)(t,y)<0 $ at $ (t_0,y_0) $ we obtain a contradiction. 
Indeed, we use that $ (\partial_y T_2-\varphi_+)(t_0,y)\leq 0  $ and therefore $ (\partial_y T_2-\varphi_+)(t_0,y_0) $ is a maximum. Thus,
\[0>{\mathcal{L}}^2_2(\partial_y T_2-\varphi_+)(t_0,y_0)\geq0.\]
We finally conclude that 
\[\Arrowvert \partial_yT_2\Arrowvert_\infty\leq \left[|\partial_yw(0^+)|+T_M^4+\frac{T_M}{4}\left(1+\frac{C_1+1}{2}+4T_M^3\right)\right]e^{4T_M^3t^*}.\]
Therefore, $ (T_1,T_2,\dot{s})$ can be extended for $ t>t^* $.
\end{proof}

Finally, we prove Lemma \ref{lemma.global}. 
\begin{proof}[Proof of Lemma \ref{lemma.global}]
By our assumptions on $ T_0 $ we can fix some $ \theta\in(0,1) $ such that $ \sup\limits_{\R_-} T_0\leq T_M+\frac{L^2 \kappa}{T_M K}\frac{(1-\theta)}{4} $. Let us also define \[ C^0_2>\max\left\{\frac{T_M^{3/2}}{2\sqrt{3}} ,\Gamma^-\left(\left(\frac{T_M^5+1}{L^2}\right)^{1/2}\right),\frac{T_M^{3/2}}{2},\frac{\left(T_M^5+1\right)^{1/2}}{2T_M},\sup\limits_{\R_+}\frac{|\partial_yT_0|}{T_M},\sup\limits_{\R_-}\frac{|\partial_yT_0|K}{L(1-\theta)} \right\}\]
and let us denote $ f_-^{C_2^0}(y)=T_M+\frac{(1-\theta)L^2\kappa}{2T_MK}\left(1-e^{\frac{2T_M C_2^0}{L\kappa}y}\right) $ for $ y<0 $ and $ f_+^{C_2^0}(y)=T_Me^{-C_2^0y} $ for $ y>0 $. Then, since $ 0\geq\partial_yT_0(0^-)>\partial_yf_-^{C_2^0}(0^-)  $ and $0\geq\partial_yT_0(0^+)>\partial_yf_+^{C_2^0}(0^+)  $, by monotonicity there exist constants $ \delta_1(C_2^0),\delta_2(C_2^0)>0 $ such that
\[T_0(y)<f_-^{C_2^0}(y)\text{ for }y\in(-\delta_1(C_2^0),0) \text{ and }f_+^{C_2^0}(y)<T_0(y)\text{ for }y\in(0,\delta_2(C_2^0)).\]
We remark that since $ C_2\mapsto f_-^{C_2}(y) $ is increasing in $ C_2>0 $ for $ y<0 $ and since $ C_2\mapsto f_+^{C_2}(y) $ is decreasing in $ C_2>0 $ for $ y>0 $  the estimates $ T_0(y)<f_-^{C_2}(y) $ and $ f_+^{C_2}(y)<T_0(y) $ are valid in the intervals $ (-\delta_1(C_2^0),0) $ and $ (0,\delta_2(C_2^0)) $, respectively. Thus, defining
\[C_2=\max\left\{C_2^0,\frac{L\kappa}{2T_M\delta_1(C_2^0)}\ln\left(\frac{1}{2}\right), \frac{1}{\delta_2(C_2^0)}\ln\left(\frac{2T_M}{\inf\limits_{\R_+}T_0}\right)\right\}\]
 we see that $ f_-^{C_2}(y)>T_0(y) $ for $ y<0 $ as well as $ T_0(y)>f_+^{C_2}(y) $ for $ y>0 $. Moreover, we notice that $ C_2<\infty $ by assumption on $ T_0 $. We now choose $ \alpha=-\frac{(1-\theta)LC_2}{K} $ and $ C_1=\frac{2T_M}{L}C_2 $. Notice that by definition $ C_1>\left(\frac{T_M^5+1}{L^2}\right)^{1/2} $. Then, for the chosen constants $  C_1,C_2,\alpha $ the function $ w $ defined by \eqref{W} satisfies \[w(y)=f_-^{C_2}(y)\text{ for }y<0\text{ and }w(y)<f_+^{C_2}(y)\text{ for }y>0.\]
 Thus, we have found constants $ C_1,C_2>0 $ such that
 \[w(y)<T_0(y )\text{ for }y<0\text{ and }T_0(y)>0\text{ for }y>0.\] Notice that by construction for any $ R>0 $ there exists $ a>0 $ such that $ |T_0(y)-w(y)|>a $ for all $ |y|>R $.
 Moreover, by definition
 \[|\partial_y w(0^-)|=|\alpha|=\frac{(1-\theta)LC_2}{K}>\sup\limits_{\R_-}|\partial_yT_0| \text{ and }|\partial_y w(0^+)|>C_2T_M>\sup\limits_{\R_+}|\partial_yT_0(y)|.\]
 Finally, since $ C_2>\frac{T_M^{3/2}}{2} $ we conclude
\[\partial_yw(0^+)=-C_2T_M\left(1+\frac{T_M^3}{4C_2^2}\right)>-2C_2T_M=-LC_1,\] which implies $ \Gamma^-(C_1)<C_2<\Gamma^+(C_1) $ as well as the fact
\[w(0^-)=\alpha=-\frac{(1-\theta)LC_2}{K}>-\frac{L}{K}C_2.\] 

We also remark that the considered class of initial data is optimal for the argument in Theorem \ref{thm.global} involving $ w $ as a barrier function defined in \eqref{W}. Since we can take $ C_2 $ arbitrary large as $ C_2=\frac{1}{\eps}\to \infty $ we obtain \[ w(y)\leq T_Me^{-\frac{y}{\eps}} \text{ for }y>0\text{ and } \partial_yw(0^+)=-\frac{T_M}{\eps}\left(1+\frac{T_M^3\eps^2}{4}\right)\to-\infty \text{ as }\eps\to 0.\]
Moreover, $ C_1> \frac{T_M}{L\eps}\left(1+\frac{T_M^3\eps^2}{4}\right) $ so that $ \partial_y w(0^-)=-|\alpha|>-\frac{K}{L\eps}\to -\infty $ as $ \eps\to0$ and
\[  w(y)< T_M+\frac{\kappa L}{KC_1}\frac{1}{\eps}\left(1-\exp\left(\frac{C_1}{\kappa}y\right)\right)<T_M+\frac{\kappa L^2}{KT_M}\frac{1}{\left(1+\frac{T_M^3\eps^2}{4}\right)}\left(1-\exp\left(\frac{C_1}{\kappa}y\right)\right)\to T_M+\frac{\kappa L^2}{KT_M}\]
as $ \eps \to 0 $ and $ y<0 $. Moreover, $ \dot{s}(t)\in (-\infty,\infty) $.
	
\end{proof}

\begin{remark}

Observe also that the class of initial values $ T_0 $ considered in Theorem \ref{thm.global} is optimal for the argument presented in the proof of the Theorem. Indeed, instead of considering for $ y>0 $ the subsolution $ w''+C_2w'\geq w^4 $, we could have considered the solution to
\begin{equation}\label{ode1}
	\begin{cases}
		w''(y)+C_2w'(y)=w^4(y)&y>0\\
		w(0)=T_M\\
		w(y)\to 0 & \text{ as }y\to\infty\\
	w\geq 0	\end{cases}
\end{equation}
That such a solution exists can be proven considering the variational problem given by the functional
\[I(w)=\int_0^\infty e^{C_2y}\left(\frac{|\partial_y w(y|^2)}{2}+\frac{w^5(y)}{5}\right) dy.\]
By the direct method of calculus of variations one can prove that there exists a unique minimizer of $ I $ on the closed convex set
\[\mathcal{A}= \left\{g\geq 0: g\in W^{1,2}\left(\R_+;e^{C_2y}dy\right)\cap L^5\left(\R_+;e^{C_2y}dy\right),\; g(0)=T_M\right\}.\]
Since then $ e^{\frac{C_2}{2}y}g\in L^\infty\cap C^{0,1/2}(\R_+) $ it also follows that if $ g\in\mathcal{A} $ then $ \lim\limits_{y\to\infty}g=0 $. The unique minimizer $ w\in\mathcal{A} $ is also bounded by $ T_M $, since $ I[\min\{w,T_M\}]\leq I[w]\leq I[\min\{w,T_M\}] $. Moreover, $ w $ solves weakly the following variational inequality
\[-\partial_y\left(e^{C_2y}\partial_y w(y)\right)+w^4(y)e^{C_2y}\geq 0,\]
and is a weak solution of $ -\partial_y\left(e^{C_2y}\partial_y w(y)\right)+w^4(y)e^{C_2y}= 0 $ in the region $ w>0 $. With the weak maximum principle it can be also shown that $ \{y>0:w(y)>0\}=\R_+ $. This implies that the unique minimizer $ w $ is a weak solution of 
\[ -\partial_y\left(e^{C_2y}\partial_y w(y)\right)+w^4(y)e^{C_2y}= 0 \text{ in }\R_+.\]
Using elliptic regularity we obtain easily that since $ w\in L^\infty\cap C^{0,1/2}(\R_+)  $ also $ w^4\in L^\infty\cap C^{0,1/2}(\R_+) $ and hence locally $ w\in C_{\text{loc}}^{2+1/2}(\R_+) $ so that iterating this argument we have $ w\in C^\infty(\R_+)\cap L^\infty(\R_+) $. Thus, $ w $ is a strong solution solving the boundary problem \eqref{ode1}.

Finally, the solution $ w $ of \eqref{ode1} is unique. This is a consequence of the strong maximum principle.
\\

Let us assume now again that $ C_2=\frac{1}{\eps} $ is arbitrarily large. Then using the rescaling $ y=\eps\eta $ and $ w(y)=w(\eps\eta)=\tilde{w}(\eta) $ we see that the leading order of $ \tilde{w} $ solves as $ \eps\to 0 $
\begin{equation*}\label{ode2}
	\begin{cases}
		\tilde{w}''(\eta)+	\tilde{w}'(\eta)=0&\eta>0\\
			\tilde{w}(0)=T_M\\
			\tilde{w}(\eta)\to 0 & \text{ as }\eta\to\infty\\
			\tilde{w}\geq 0	\end{cases}
\end{equation*}
Hence, $ \tilde{w}(\eta)=T_M\left(e^{-\eta}\right) $. Thus, $ w(y)=T_M\left(e^{-y/\eps}\right)  $ at the leading order, so that we need to take \[C_1>\frac{T_M}{\eps L},\]
which implies for $ y<0 $ as above that \[ w(y)< T_M+\frac{\kappa L}{KC_1}\frac{1}{\eps}<T_M+\frac{\kappa L^2}{KT_M} \text{ as }\eps\to 0 .\]
\end{remark}

\bibliographystyle{siam}
\bibliography{literature_stefan}

\begin{thebibliography}{10}

\bibitem{droplets3}
{\sc B.~Abramzon and S.~Sazhin}, {\em Droplet vaporization model in the
  presence of thermal radiation}, International Journal of Heat and Mass
  Transfer, 48 (2005), pp.~1868--1873.

\bibitem{Golse3}
{\sc C.~Bardos, F.~Golse, and B.~Perthame}, {\em The radiative transfer
  equations: existence of solutions and diffusion approximation under
  accretivity assumptions---a survey}, in Proceedings of the conference on
  mathematical methods applied to kinetic equations ({P}aris, 1985), vol.~16,
  1987, pp.~637--652.

\bibitem{Golse6}
\leavevmode\vrule height 2pt depth -1.6pt width 23pt, {\em The {R}osseland
  approximation for the radiative transfer equations}, Comm. Pure Appl. Math.,
  40 (1987), pp.~691--721.

\bibitem{Bertsch}
{\sc M.~Bertsch, P.~de~Mottoni, and L.~A. Peletier}, {\em The {S}tefan problem
  with a heating term}, in Free boundary problems: applications and theory,
  {V}ol. {III} ({M}aubuisson, 1984), vol.~120 of Res. Notes in Math., Pitman,
  Boston, MA, 1985, pp.~1--7.

\bibitem{CannonHenryKotlow}
{\sc J.~R. Cannon, D.~B. Henry, and D.~B. Kotlow}, {\em Classical solutions of
  the one-dimensional, two-phase {S}tefan problem}, Ann. Mat. Pura Appl. (4),
  107 (1975), pp.~311--341.

\bibitem{CannonPrimicerio}
{\sc J.~R. Cannon and M.~Primicerio}, {\em A two phase {S}tefan problem with
  temperature boundary conditions}, Ann. Mat. Pura Appl. (4), 88 (1971),
  pp.~177--191.

\bibitem{engineering2}
{\sc S.~Chan, D.~Cho, and G.~Kocamustafaogullari}, {\em Melting and
  solidification with internal radiative transfer—a generalized phase change
  model}, International Journal of Heat and Mass Transfer, 26 (1983),
  pp.~621--633.

\bibitem{Chandrasekhar}
{\sc S.~Chandrasekhar}, {\em Radiative transfer}, Dover Publications, Inc., New
  York, 1960.

\bibitem{compton}
{\sc K.~Compton}, {\em L{X}{X}{I}{I}{I}. {S}ome properties of resonance
  radiation and excited atoms}, The London, Edinburgh, and Dublin Philosophical
  Magazine and Journal of Science, 45 (1923), pp.~750--760.

\bibitem{dematte}
{\sc E.~Demattè}, {\em On a kinetic equation describing the behavior of a gas
  interacting mainly with radiation}, Journal of Statistical Physics, 190
  (2023), p.~124.

\bibitem{dematt42024compactness}
{\sc E.~Demattè, J.~W. Jang, and J.~J.~L. Velázquez}, {\em Compactness and
  existence theory for a general class of stationary radiative transfer
  equations}.
\newblock \url{https://doi.org/10.48550/arXiv.2401.12828}, 2024.

\bibitem{dematte2024equilibrium}
{\sc E.~Demattè and J.~J.~L. Velázquez}, {\em Equilibrium and
  {N}on-{E}quilibrium diffusion approximation for the radiative transfer
  equation}.
\newblock \url{https://arxiv.org/abs/2407.11797}, 2024.

\bibitem{dematte2023diffusion}
\leavevmode\vrule height 2pt depth -1.6pt width 23pt, {\em On the {D}iffusion
  {A}pproximation of the {S}tationary {R}adiative {T}ransfer {E}quation with
  {A}bsorption and {E}mission}, Ann. Henri Poincaré,  (2025).

\bibitem{StefanRad2}
\leavevmode\vrule height 2pt depth -1.6pt width 23pt, {\em Traveling waves for
  a two-phase {S}tefan problem with radiation}, 2025.
\newblock Preprint.

\bibitem{FasanoPrimicerio1}
{\sc A.~Fasano and M.~Primicerio}, {\em Mushy regions with variable temperature
  in melting process}, Boll. Un. Mat. Ital. B (6), 4 (1985), pp.~601--626.

\bibitem{FasanoPrimicerio2}
\leavevmode\vrule height 2pt depth -1.6pt width 23pt, {\em Phase-change with
  volumetric heat sources: {S}tefan's scheme vs. enthalpy formulation}, Boll.
  Un. Mat. Ital. Suppl.,  (1985), pp.~131--149.

\bibitem{Friedman5}
{\sc A.~Friedman}, {\em Free boundary problems for parabolic equations. {I}.
  {M}elting of solids}, J. Math. Mech., 8 (1959), pp.~499--517.

\bibitem{Friedman1}
\leavevmode\vrule height 2pt depth -1.6pt width 23pt, {\em Partial differential
  equations of parabolic type}, Prentice-Hall, Inc., Englewood Cliffs, NJ,
  1964.

\bibitem{Friedman4}
\leavevmode\vrule height 2pt depth -1.6pt width 23pt, {\em One dimensional
  {S}tefan problems with nonmonotone free boundary}, Trans. Amer. Math. Soc.,
  133 (1968), pp.~89--114.

\bibitem{Friedman3}
\leavevmode\vrule height 2pt depth -1.6pt width 23pt, {\em The {S}tefan problem
  in several space variables}, Trans. Amer. Math. Soc., 133 (1968), pp.~51--87.

\bibitem{Friedman2}
\leavevmode\vrule height 2pt depth -1.6pt width 23pt, {\em Variational
  principles and free-boundary problems}, A Wiley-Interscience Publication,
  John Wiley \& Sons, Inc., New York, 1982.
\newblock Pure and Applied Mathematics.

\bibitem{FriedmanKinderlehrer}
{\sc A.~Friedman and D.~Kinderlehrer}, {\em A one phase {S}tefan problem},
  Indiana Univ. Math. J., 24 (1974/75), pp.~1005--1035.

\bibitem{Golse1}
{\sc F.~Golse, F.~Hecht, O.~Pironneau, D.~Smets, and P.-H. Tournier}, {\em
  Radiative transfer for variable three-dimensional atmospheres}, J. Comput.
  Phys., 475 (2023), pp.~Paper No. 111864, 19.

\bibitem{Golse2}
{\sc F.~Golse and O.~Pironneau}, {\em Radiative transfer in a fluid}, Rev. R.
  Acad. Cienc. Exactas F\'{\i}s. Nat. Ser. A Mat. RACSAM, 117 (2023), pp.~Paper
  No. 37, 17.

\bibitem{paper}
{\sc J.~W. Jang and J.~J.~L. Vel\'{a}zquez}, {\em L{TE} and non-{LTE} solutions
  in gases interacting with radiation}, J. Stat. Phys., 186 (2022), pp.~Paper
  No. 47, 62.

\bibitem{jang}
\leavevmode\vrule height 2pt depth -1.6pt width 23pt, {\em On the {T}emperature
  {D}istribution of a {B}ody {H}eated by {R}adiation}, SIAM Journal on
  Mathematical Analysis, 56 (2024), pp.~3478--3508.

\bibitem{droplets2}
{\sc Z.-C. Jin, F.-X. Sun, X.-L. Xia, and C.~Sun}, {\em Numerical investigation
  of evaporation and radiation absorption of a non-spherical water droplet
  under asymmetrically radiative heating}, International Journal of Heat and
  Mass Transfer, 140 (2019), pp.~66--79.

\bibitem{LaceyShillor}
{\sc A.~A. Lacey and M.~Shillor}, {\em The existence and stability of regions
  with superheating in the classical two-phase one-dimensional {S}tefan problem
  with heat sources}, IMA J. Appl. Math., 30 (1983), pp.~215--230.

\bibitem{LaceyTayler}
{\sc A.~A. Lacey and A.~B. Tayler}, {\em A mushy region in a {S}tefan problem},
  IMA J. Appl. Math., 30 (1983), pp.~303--313.

\bibitem{Ladyzenskaja}
{\sc O.~A. Lady\v{z}enskaja, V.~A. Solonnikov, and N.~N. Ural'ceva}, {\em
  Linear and quasilinear equations of parabolic type}, vol.~Vol. 23 of
  Translations of Mathematical Monographs, American Mathematical Society,
  Providence, RI, 1968.
\newblock Translated from the Russian by S. Smith.

\bibitem{droplets5}
{\sc P.~L.~C. Lage and R.~H. Rangel}, {\em Single {D}roplet {V}aporization
  {I}ncluding {T}hermal {R}adiation {A}bsorption}, Journal of Thermophysics and
  Heat Transfer, 7 (1993), pp.~502--509.

\bibitem{Meirmanov}
{\sc A.~M. Meirmanov}, {\em The {S}tefan problem}, vol.~3 of De Gruyter
  Expositions in Mathematics, Walter de Gruyter \& Co., Berlin, 1992.
\newblock Translated from the Russian by Marek Niezg\'odka and Anna Crowley,
  With an appendix by the author and I. G. G\"otz.

\bibitem{mihalas}
{\sc D.~Mihalas and B.~W. Mihalas}, {\em Foundations of radiation
  hydrodynamics}, Oxford University Press, New York, 1984.

\bibitem{Milne}
{\sc E.~A. Milne}, {\em The {D}iffusion of {I}mprisoned {R}adiation {T}hrough a
  {G}as}, J. London Math. Soc., 1 (1926), pp.~40--51.

\bibitem{oxenius}
{\sc J.~Oxenius}, {\em Kinetic theory of particles and photons}, vol.~20 of
  Springer Series in Electrophysics, Springer-Verlag, Berlin, 1986.
\newblock Theoretical foundations of non-LTE plasma spectroscopy.

\bibitem{PrimicerioUghi}
{\sc M.~Primicerio and M.~Ughi}, {\em Phase-change problems with mushy
  regions}, in Free boundary problems: applications and theory, {V}ol. {III}
  ({M}aubuisson, 1984), vol.~120 of Res. Notes in Math., Pitman, Boston, MA,
  1985, pp.~61--71.

\bibitem{Rubinstein}
{\sc L.~I. Ruben\v{s}te\u{i}n}, {\em The {S}tefan problem}, vol.~Vol. 27 of
  Translations of Mathematical Monographs, American Mathematical Society,
  Providence, RI, 1971.
\newblock Translated from the Russian by A. D. Solomon.

\bibitem{engineering5}
{\sc N.~A. Rubtsov, N.~A. Savvinova, and S.~D. Sleptsov}, {\em Numerical
  {S}imulation of {S}tefan’s {P}roblem in a {P}lane {L}ayer with {S}elective
  {O}ptical {P}roperties of a {S}emitransparent {M}edium}, in 12th
  International Conference on Heat Transfer, Fluid Mechanics and
  Thermodynamics, 2016, pp.~1211--1215.

\bibitem{engineering1}
\leavevmode\vrule height 2pt depth -1.6pt width 23pt, {\em Modeling of ice
  heating and melting in approximation of the {Stefan} problem considering
  radiation}, Thermophysics and Aeromechanics, 25 (2018), pp.~421--428.

\bibitem{engineering3}
{\sc N.~A. Rubtsov and S.~D. Sleptsov}, {\em Radiant-conductive heat transfer
  in semitransparent medium with phase transition at boundaries with different
  absorptivity}, Thermophysics and Aeromechanics, 17 (2010), pp.~221--228.

\bibitem{Rutten}
{\sc R.~J. Rutten}, {\em Radiative {T}ransfer in {S}tellar {A}tmospheres}.
\newblock \url{https://robrutten.nl/rrweb/rjr-pubs/2003rtsa.book.....R.pdf},
  2003.
\newblock Utrecht University lecture notes, 8th edition.

\bibitem{droplets7}
{\sc S.~S. Sazhin}, {\em Modelling of fuel droplet heating and evaporation:
  Recent results and unsolved problems}, Fuel, 196 (2017), pp.~69--101.

\bibitem{droplets8}
{\sc M.~Sitarski}, {\em Thermal {D}ynamics of a {S}mall {V}aporizing {S}lurry
  {D}roplet in a {H}ot and {R}adiant {E}nvironment; {F}easibility of the
  {S}econdary {A}tomization}, Combustion Science and Technology, 71 (1990),
  pp.~53--75.

\bibitem{Stefan5}
{\sc J.~Stefan}, {\em \"{U}ber die {B}eziehung zwischen der
  {W}\"{a}rmestrahlung und der {T}emperatur}, Sitzungsberichte / Akademie der
  Wissenschaften in Wien, Mathematisch-Naturwissenschaftliche Klasse, 79
  (1879), pp.~391--428.

\bibitem{Stefan3}
\leavevmode\vrule height 2pt depth -1.6pt width 23pt, {\em \"{U}ber die
  {T}heorie der {E}isbildung, insbesondere \"{u}ber die {E}isbildung im
  polarmeere}, Sitzungsberichte / Akademie der Wissenschaften in Wien,
  Mathematisch-Naturwissenschaftliche Klasse, 98 (1889), pp.~965--983.

\bibitem{engineering4}
{\sc A.~M. Timofeev}, {\em Effect of optical properties on radiative-conductive
  heat transfer in a system of two semitransparent layers}, Thermophysics and
  Aeromechanics, 27 (2020), pp.~131--138.

\bibitem{Visintin1}
{\sc A.~Visintin}, {\em Models for supercooling and superheating effects},
  vol.~425 of Istituto di Analisi Numerica del Consiglio Nazionale delle
  Ricerche [Institute of Numerical Analysis of the National Research Council],
  Istituto di Analisi Numerica del Consiglio Nazionale delle Ricerche, Pavia,
  1984.

\bibitem{Visintin2}
{\sc A.~Visintin}, {\em Models of phase transitions}, vol.~28 of Progress in
  Nonlinear Differential Equations and their Applications, Birkh\"auser Boston,
  Inc., Boston, MA, 1996.

\bibitem{droplets1}
{\sc F.~A. Williams}, {\em On vaporation of mist by radiation}, Int. J. Heat
  Mass Transfer, 8 (1965), pp.~575--587.

\bibitem{droplets6}
{\sc H.~Xie, M.~Zhu, B.~Zhang, and X.~Guan}, {\em The {R}eview of the {P}hase
  {T}ransition {R}adiation}, Energy Procedia, 16 (2012), pp.~997--1002.
\newblock 2012 International Conference on Future Energy, Environment, and
  Materials.

\bibitem{Zeldovic}
{\sc Y.~B. Zel'dovich and Y.~P. Raizer}, {\em Physics of {S}hock {W}aves and
  {H}igh-{T}emperature {H}ydrodynamic {P}henomena}, Dover Publications, 2002.

\end{thebibliography}
\end{document}